\newtheorem{thm}{Theorem}[section]
\newtheorem{cor}[thm]{Corollary}
\newtheorem{lem}[thm]{Lemma}
\newtheorem{prop}[thm]{Proposition}
\theoremstyle{definition}
\newtheorem{defn}[thm]{Definition}
\theoremstyle{remark}
\numberwithin{equation}{section}
\newcommand{\N}{\mathbb{N}}
\newcommand{\Z}{\mathbb{Z}}
\newcommand{\R}{\mathbb{R}}
\newcommand{\B}[2]{\mathbf{B}_{#1}}
\newcommand{\Sp}[2]{\mathbf{S}_{#1}}
\newcommand{\C}{\mathcal{C}}
\newcommand{\Pre}{\mathbb{P}}
\newcommand{\E}{\mathfrak{E}}
\newcommand{\M}{\mathfrak{M}}
\newcommand{\acts}{\curvearrowright}
\newcommand{\symd}{\triangle}
\newcommand{\sH}{\mathrm{H}}
\newcommand{\h}{h}
\renewcommand{\:}{\,:\,}
\begin{document}

\title[Finite entropy actions of free groups]{Finite entropy actions of free groups, rigidity of stabilizers, and a Howe--Moore type phenomenon}


\author{Brandon Seward}
\address{Department of Mathematics, University of Michigan, 530 Church Street, Ann Arbor, MI 48109, U.S.A.}
\email{b.m.seward@gmail.com}
\keywords{f-invariant, entropy, free group, stabilizers, ergodic}

\begin{abstract}
We study a notion of entropy for probability measure preserving actions of finitely generated free groups, called f-invariant entropy, introduced by Lewis Bowen. In the degenerate case, the f-invariant entropy is negative infinity. In this paper we investigate the qualitative consequences of having finite f-invariant entropy. We find three main properties of such actions. First, the stabilizers occurring in factors of such actions are highly restricted. Specifically, the stabilizer of almost every point must be either trivial or of finite index. Second, such actions are very chaotic in the sense that, when the space is not essentially countable, every non-identity group element acts with infinite Kolmogorov--Sinai entropy. Finally, we show that such actions display behavior reminiscent of the Howe--Moore property. Specifically, if the action is ergodic then there is an integer $n$ such that for every non-trivial normal subgroup $K$ the number of $K$-ergodic components is at most $n$. Our results are based on a new formula for f-invariant entropy.
\end{abstract}
\maketitle

\section{Introduction}

Recently Lewis Bowen \cite{B10a} defined a numerical measure conjugacy invariant for probability measure preserving actions of finitely generated free groups, called f-invariant entropy. The f-invariant entropy is relatively easy to calculate, has strong similarities with the classical Kolmogorov--Sinai entropy of actions of amenable groups, and in fact agrees with the classical Kolmogorov--Sinai entropy when the finitely generated free group is simply $\Z$. Moreover, f-invariant entropy is essentially a special, simpler case of the recently emerging entropy theory of sofic group actions being developed by Bowen (\cite{B10b}, \cite{B10c}, \cite{Ba}), Kerr--Li (\cite{KL}, \cite{KL11a}, \cite{KL11b}), Kerr (\cite{Ke}), and others (\cite{C}, \cite{Z}, \cite{ZC}). The classical Kolmogorov--Sinai entropy has unquestionably been a fundamental and powerful tool in the study of actions of amenable groups, and f-invariant entropy seems posed to take a similar role in the study of actions of finitely generated free groups. There is therefore a significant need to develop and understand the theory of f-invariant entropy. This paper serves as a piece of this large program. We study how f-invariant entropy, or more specifically the property of having finite f-invariant entropy, relates to the qualitative dynamical properties of the action.

Let us define f-invariant entropy. Let $G$ be a finitely generated free group, let $S$ be a free generating set for $G$, and let $G$ act on a standard probability space $(X, \mu)$ by measure preserving bijections. If $\alpha$ is a countable measurable partition of $X$ and $F \subseteq G$ is finite, then we define
$$F \cdot \alpha = \bigvee_{f \in F} f \cdot \alpha.$$
Recall that the \emph{Shannon entropy} of a countable measurable partition $\alpha$ of $X$ is
$$\sH(\alpha) = \sum_{A \in \alpha} -\mu(A) \cdot \log(\mu(A)).$$
Also recall that $\alpha$ is \emph{generating} if the smallest $G$-invariant $\sigma$-algebra containing $\alpha$ contains all measurable sets up to sets of measure zero. If there exists a generating partition $\alpha$ having finite Shannon entropy, then the f-invariant entropy of this action is defined to be
$$f_G(X, \mu) = \lim_{n \rightarrow \infty} F_G(X, \mu, S, \B{n}{S} \cdot \alpha),$$
where
$$F_G(X, \mu, S, \beta) = (1 - 2 r) \cdot \sH(\beta) + \sum_{s \in S} \sH(s \cdot \beta \vee \beta)$$
and $r = |S|$ is the rank of $G$ and $\B{n}{S}$ is the ball of radius $n$ centered on $1_G$ with respect to the generating set $S$. Surprisingly, Bowen proved in \cite{B10a} and \cite{B10c} that the above limit always exists (the terms in the limit are non-increasing) and the value $f_G(X, \mu)$ neither depends on the choice of free generating set $S$ nor on the choice of finite Shannon entropy generating partition $\alpha$. If there is no generating partition for this action having finite Shannon entropy, then the f-invariant entropy is undefined.

Unlike Kolmogorov--Sinai entropy, f-invariant entropy may be negative. In fact for some actions $f_G(X, \mu) = -\infty$. However, one always has $f_G(X, \mu) \leq \sH(\alpha) < \infty$ \cite{B10a}. Thus the conditions that $f_G(X, \mu)$ is finite and $f_G(X, \mu) \neq - \infty$ are equivalent.

The goal of this paper is to expand our knowledge on how f-invariant entropy is related to qualitative dynamical properties. There have previously only been a few results of this type. Bowen proved that Bernoulli shifts over finitely generated free groups are classified up to measure conjugacy by their f-invariant entropy, when it is defined \cite{B10a}, and he proved that actions with negative f-invariant entropy cannot be factors of Bernoulli shifts \cite{B10c}. In \cite[Proof of Lemma 3.5]{BG} Bowen and Gutman showed that for any action $G \acts (X, \mu)$ on an atomless probability space with defined and finite f-invariant entropy, there must be a cyclic subgroup of $G$ for which the induced action of this subgroup has infinite Kolmogorov--Sinai entropy (we strengthen this in Theorem \ref{INTRO KOLM} below). Finally, in \cite[Corollary 1.2]{S12} the author showed that if $G \acts (X, \mu)$ has defined and positive f-invariant entropy and $H \leq G$ is any finitely generated infinite-index subgroup then the restricted action $H \acts (X, \mu)$ does not admit any generating partition having finite Shannon entropy.

Our results will assume very little of the action. We will generally only assume that the f-invariant entropy be defined and be finite, and sometimes we may assume that the measure is ergodic or that it is not supported on a countable set. Our main theorem is below. Before stating this theorem we remind the reader that a point $y$ in a probability space $(Y, \nu)$ is an atom if $\nu(\{y\}) > 0$.

\begin{thm} \label{INTRO STAB}
Let $G$ be a finitely generated non-cyclic free group acting on a standard probability space $(X, \mu)$ by measure preserving bijections. Assume that $f_G(X, \mu)$ is defined. If $f_G(X, \mu) \neq - \infty$ and $(Y, \nu)$ is {\bf any} factor of $(X, \mu)$, then for $\nu$-almost every $y \in Y$, the stabilizer of $y$ is either trivial or has finite index in $G$. Furthermore, $\nu$-almost every $y \in Y$ with non-trivial stabilizer is an atom, and thus there are essentially only countably many points with non-trivial stabilizer.
\end{thm}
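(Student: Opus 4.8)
The engine of the proof is the new formula for f-invariant entropy; I will describe how to deduce the theorem from it. I would first record two facts about the behaviour of f-invariant entropy under factors. First, f-invariant entropy is affine over $G$-invariant decompositions of the underlying space, so the restriction of $(X,\mu)$ to any $G$-invariant set of positive measure again has $f_G\neq-\infty$. Second, every factor $(Y,\nu)$ of an action with $f_G\neq-\infty$ again carries a generating partition of finite Shannon entropy, and the Abramov--Rokhlin type identity $f_G(X,\mu)=f_G(Y,\nu)+f_G(X,\mu\mid Y,\nu)$ together with the bound $f_G(X,\mu\mid Y,\nu)\le\sH(\alpha\mid\mathcal Y)<\infty$ yields $f_G(Y,\nu)\neq-\infty$. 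Granting these, the first assertion of the theorem reduces to the claim $(\ast)$: if $G\acts(Z,\eta)$ carries a finite Shannon entropy generating partition and $\eta$-almost every point of $Z$ has non-trivial stabilizer of infinite index, then $f_G(Z,\eta)=-\infty$. (Note $(\ast)$ has content only for rank $\ge 2$, as $\Z$ has no non-trivial subgroup of infinite index.) For if the conclusion of the theorem failed for a factor $(Y,\nu)$, the set of points of $Y$ with non-trivial infinite-index stabilizer would be $G$-invariant of positive measure; restricting $Y$ to it, pulling back a $G$-invariant positive-measure piece of $X$ above it, and applying $(\ast)$ to the corresponding factor of that piece, we would obtain a finite-$f_G$ action with a factor of f-invariant entropy $-\infty$, contradicting the second fact.

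The heart is the proof of $(\ast)$ via the new formula. Since $\{z:\Stab(z)\neq 1\}=\bigcup_{1\neq g\in G}\mathrm{Fix}(g)$ is a countable union, there is $g\neq 1$ with $\eta(\mathrm{Fix}(g))>0$, and by hypothesis $\eta$-almost every point of $\mathrm{Fix}(g)$ lies in an infinite $G$-orbit. Now $g^k$ fixes $\mathrm{Fix}(g)$ for every $k\in\Z$, so for any finite Shannon entropy generating partition $\alpha$ of $Z$ the translate $g^k\cdot\alpha$ coincides with $\alpha$ on the positive-measure set $\mathrm{Fix}(g)$. In the new formula, $f_G(Z,\eta)$ is reconstructed from $\sH(\alpha)$ together with conditional Shannon entropies of translates $h\cdot\alpha$ given ``pasts'' $\mathcal P_h$ read off from the tree structure of $(G,S)$, and finiteness of $f_G(Z,\eta)$ is exactly convergence of the resulting telescoped sum. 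I would show that the redundancy just noted forces each conditional entropy attached to $g^k$ to fall short of its ``generic'' value by at least a fixed positive amount depending only on $\eta(\mathrm{Fix}(g))$; the infinite-index hypothesis ensures that the vertices $\{g^k\}$ are distinct tree vertices which are never eventually identified along any Schreier graph met by $\mathrm{Fix}(g)$, so these deficits recur at infinitely many scales and accumulate, driving the sum --- hence $f_G(Z,\eta)$ --- to $-\infty$. (When orbits are finite the Schreier graphs are finite and the analogous redundancy is uniformly bounded, consistently with the finite value $f_G(G/H,\mathrm{unif})=(1-r)\log[G:H]$; so the infinitude of the orbits, i.e.\ the infinite-index hypothesis, is used in an essential way.)

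For the atom assertion, put $Z'=\{y\in Y:\ [G:\Stab(y)]<\infty\}$; it is $G$-invariant and every point of it has finite orbit. Let $Z''$ be the non-atomic part of $Z'$, again $G$-invariant, and suppose $\nu(Z'')>0$. Pulling back a $G$-invariant positive-measure piece of $X$ above $Z''$ and using affinity, $Z''$ with the normalized restriction of $\nu$ is a factor of a finite-$f_G$ action, hence carries a finite Shannon entropy generating partition $\gamma$. But on $Z''$ every orbit is finite, so for $z\in Z''$ the tuple $(\gamma(hz))_{h\in G}$ is determined by the finite data consisting of the rooted Schreier graph of $Gz$ and the $\gamma$-labels of the finitely many points of $Gz$; as this data ranges over a countable set, $\bigvee_{h\in G}h\cdot\gamma$ is a countable partition, so $\sigma(G\cdot\gamma)$ cannot be the Borel $\sigma$-algebra of the atomless space $Z''$ --- contradicting that $\gamma$ generates. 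Hence $\nu(Z'')=0$, so every point of $Y$ with finite-index stabilizer is an atom; combined with the first assertion, $\nu$-almost every point with non-trivial stabilizer is an atom, and only countably many such points exist because a probability space has at most countably many atoms.

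The main obstacle is the quantitative step inside $(\ast)$: turning the qualitative observation that $g^k\cdot\alpha$ agrees with $\alpha$ on a fixed positive-measure set into an actual divergence of the new formula, and in particular pinning down why infinite index --- equivalently, infinitude of the relevant Schreier graphs --- is precisely the hypothesis under which the per-scale deficit fails to be summable, even though for finite-index stabilizers the same redundancy is present but harmless. A secondary point is securing the two input facts of the first paragraph: that factors of finite-$f_G$ actions again carry finite Shannon entropy generating partitions, and the Abramov--Rokhlin identity together with the finiteness of relative f-invariant entropy.
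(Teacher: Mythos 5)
Your overall strategy (reduce to a statement that non-free, non-atomic actions with a finite Shannon entropy generator have $f_G=-\infty$, via the independence-decay formula) is the right flavor, but two of the steps you rely on are genuinely broken.

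First, your input fact that every factor of a finite-$f_G$ action ``again carries a generating partition of finite Shannon entropy'' is false, and the paper points this out explicitly: there are actions with defined, finite f-invariant entropy having factors whose f-invariant entropy is not even defined (no finite Shannon entropy generator exists). Consequently you cannot invoke the Abramov--Rokhlin identity $f_G(X,\mu)=f_G(Y,\nu)+f_G(X,\mu/\Sigma)$ for the factor, nor apply your claim $(\ast)$ to it, nor run your atom argument on $Z''$, since all of these presuppose a finite-entropy generator downstairs. The paper's workaround is an essential part of the proof and is absent from your proposal: choose a finite partition $\beta$ of $Y$ with exactly $n$ classes of positive measure, where $n$ satisfies $-(r-1)\log(n) < f_G(X,\mu)-\sH_\mu(\alpha)$, pass to the sub-factor $(Y',\nu')$ generated by $G\cdot\beta$ (whose f-invariant entropy \emph{is} defined because $\beta$ is finite), apply Bowen's factor inequality $f_G(Y',\nu')\ge f_G(X,\mu)-\sH_\mu(\alpha)$, and rule out the purely atomic alternative by the count $f_G = -(r-1)\log(\#\text{atoms})$; freeness of $(Y',\nu')$ then pulls back to $(Y,\nu)$. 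Nothing in your outline substitutes for this.

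Second, the quantitative mechanism you sketch for $(\ast)$ cannot work as described. You propose that a single $g$ with $\eta(\mathrm{Fix}(g))>0$ forces a fixed entropy deficit at each vertex $g^k$, and that these deficits ``accumulate'' to drive $f_G$ to $-\infty$. But independence decays telescope along any path of strictly increasing word length (Lemma \ref{LEM PATH}), so the total decay collected along the geodesic ray through the powers of $g$ is bounded by $\sH(\alpha)<\infty$; deficits concentrated on the cyclic subgroup $\{g^k\}$ can never make $\sum_{h\neq 1_G}\delta(h,\alpha)$ diverge. To get divergence one must harvest decay from essentially the whole tree: the paper first amplifies non-trivial stabilizers into, for every $\epsilon>0$, a group element $g$ with $\mu(\{x : g\cdot x\in \B{|g|-1}{S}\cdot x\})>1-\epsilon$ (a nontrivial inductive construction, not just ``some $g$ fixes a positive-measure set''), proves that $\sH(\B{n}{S}\cdot\alpha/\B{n-1}{S}\cdot\alpha)\to\infty$ when $f_G\neq-\infty$ and $\mu$ is not purely atomic (Corollary \ref{COR GROWTH}), and then, for \emph{each} $w\in\Sp{n}{S}$, runs a path from $w$ out to a far vertex of the form $g^{-1}tw$ at which the conditional entropy is forced below $\epsilon/|\Sp{n}{S}|$ (this step also needs Lemma \ref{LEM CONSHAN} and the trick of merging classes of $\alpha$ into a small-entropy partition $\beta$ to handle countably infinite $\alpha$), so that the total decay dominates $\sH(\B{n}{S}\cdot\alpha/\B{n-1}{S}\cdot\alpha)-\epsilon$, which is unbounded. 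Your proposal neither identifies this sphere-summing structure nor the near-full-measure amplification it requires, and the mechanism you do name is quantitatively insufficient; this is precisely the heart of the theorem, so the gap is fatal rather than cosmetic.
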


We note the following immediate corollary.

\begin{cor} \label{INTRO POSITIVE}
Let $G$ be a finitely generated free group acting on a standard probability space $(X, \mu)$ by measure preserving bijections. Assume that $f_G(X, \mu)$ is defined. If $G \acts (X, \mu)$ is ergodic and $f_G(X, \mu) > 0$ then the action is essentially free.
\end{cor}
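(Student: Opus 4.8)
The plan is to obtain the corollary as a quick consequence of Theorem \ref{INTRO STAB}, applied to the factor $(Y,\nu) = (X,\mu)$ of itself, followed by a one-line computation of f-invariant entropy for an action on a finite set. Since $f_G(X,\mu) > 0$ we in particular have $f_G(X,\mu) \neq -\infty$, so Theorem \ref{INTRO STAB} applies and tells us that $\mu$-almost every $x \in X$ has stabilizer either trivial or of finite index in $G$, and moreover that $\mu$-almost every point with non-trivial stabilizer is an atom. If the set of points with non-trivial stabilizer is $\mu$-null then the action is essentially free and we are done, so suppose this set has positive measure.

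In that case Theorem \ref{INTRO STAB} provides an atom $x_0 \in X$. Since the action preserves $\mu$, every point of the orbit $Gx_0$ has mass $\mu(\{x_0\}) > 0$, so $X_0 := Gx_0$ is finite (it has at most $\mu(\{x_0\})^{-1}$ elements). The set $X_0$ is $G$-invariant and has positive measure, so by ergodicity $\mu(X_0) = 1$; thus $(X,\mu)$ coincides, up to measure zero, with the finite $G$-set $X_0$. Let $\alpha$ be the partition of $X_0$ into singletons; it is generating and has finite Shannon entropy, so since f-invariant entropy does not depend on the choice of finite Shannon entropy generating partition, it may be used to compute $f_G(X,\mu)$.

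Now I would carry out the computation. Because $g \cdot \alpha = \alpha$ for every $g \in G$, we have $\B{n}{S} \cdot \alpha = \alpha$ for all $n \geq 0$, and $s \cdot \alpha \vee \alpha = \alpha$ for each $s \in S$, so
$$f_G(X,\mu) = F_G(X,\mu,S,\alpha) = (1 - 2r)\,\sH(\alpha) + \sum_{s \in S} \sH(s \cdot \alpha \vee \alpha) = (1 - r)\,\sH(\alpha),$$
where $r = |S|$ is the rank of $G$. Since $r \geq 1$ and $\sH(\alpha) \geq 0$, this gives $f_G(X,\mu) \leq 0$, contradicting $f_G(X,\mu) > 0$. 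Hence the first case must hold and the action is essentially free.

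There is no serious obstacle here: the entire content is carried by Theorem \ref{INTRO STAB}, and the remaining steps — using ergodicity to reduce to a single finite orbit, and the one-line entropy computation for a finite $G$-set — are routine. The only point needing care is the degenerate rank-$0$ case, where $G$ is trivial and every action is (vacuously) essentially free, so one may assume $r \geq 1$ from the outset; and the case where $X$ is essentially a single point is absorbed into the argument above, since it too yields $f_G(X,\mu) = (1-r)\cdot 0 \le 0$.
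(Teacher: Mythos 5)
Your proof is correct and is essentially the paper's own argument: the paper deduces the corollary from Theorem \ref{INTRO STAB} combined with Lemma \ref{LEM FINACT}, which is exactly your computation that an action on an essentially finite (purely atomic, ergodic) space has $f_G(X,\mu) = -(r-1)\sH(\mu) \leq 0$, contradicting positivity. The ergodicity step (atoms with non-trivial stabilizer forcing the measure onto a single finite orbit) is used in the same way in both arguments.
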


We mention that there are examples of actions of finitely generated free groups with defined and finite f-invariant entropy which admit factors whose f-invariant entropy is not defined (see \cite{B11} and \cite{KL11b}). Thus in the above theorem the passage to a factor of $(X, \mu)$ is not superfluous.

The above theorem says that,  ignoring atoms,  all actions with finite f-invariant entropy and all of their factors are essentially free. Since every Bernoulli shift over a finitely generated free group is a factor of a Bernoulli shift with defined and finite f-invariant entropy \cite{B11}, this theorem implies that all non-trivial factors of Bernoulli shifts over finitely generated free groups are essentially free. This fact also follows from a recent result of Robin Tucker-Drob \cite{TD12} which completely characterizes those groups $G$ for which every non-trivial factor of a Bernoulli shift over $G$ is essentially free. Ornstein \cite{Or70} proved that factors of Bernoulli shifts over $\Z$ are again Bernoulli shifts, but it is not known if the same is true for Bernoulli shifts over free groups.

For some groups, the stabilizers which can appear in probability measure preserving actions are quite restricted (such as for higher rank semi-simple groups, by a well known result of Stuck and Zimmer \cite{SZ94}). However, a recent paper by Bowen \cite[Theorem 3.1 and Remark 1]{Bb} shows that probability measure preserving actions of free groups have a ``zoo'' of possible stabilizers. Thus the above theorem demonstrates a significant restriction imposed by having finite f-invariant entropy.

We derive the following corollary which exhibits a remarkable restriction on the ergodic decompositions of induced actions of normal subgroups. This property is somewhat reminiscent of the Howe--Moore property \cite{HM}. A second countable locally compact group is said to have the Howe--Moore property if every ergodic action is mixing (this is not the standard definition but is equivalent; see \cite{CCLTV}). In particular, such actions have the property that every infinite subgroup acts ergodically.

\begin{cor} \label{INTRO SUBERG}
Let $G$ be a finitely generated non-cyclic free group acting on a standard probability space $(X, \mu)$ by measure preserving bijections. Assume that $f_G(X, \mu)$ is defined. If $G \acts (X, \mu)$ is ergodic and $f_G(X, \mu) \neq -\infty$, then there is $n \in \N$ such that for every non-trivial normal subgroup $K \lhd G$ the number of ergodic components of $K \acts (X, \mu)$ is at most $n$.
\end{cor}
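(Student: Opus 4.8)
The plan is to bound the number of $K$-ergodic components uniformly over all non-trivial normal subgroups $K$ by exploiting two facts: first, that a non-trivial normal subgroup of a finitely generated non-cyclic free group has finite index exactly when it is ``large'', and that otherwise it is infinite-index but still contains elements whose action cannot have small dynamical complexity; and second, that the ergodic decomposition of $K \acts (X,\mu)$ interacts with the $G$-action through the finite permutation action of $G/K$ (when $[G:K] < \infty$) or, more subtly, through the transitive $G$-action on the ergodic components. The key intermediary is the main theorem (Theorem \ref{INTRO STAB}) applied to the factor $(Y,\nu)$ of $(X,\mu)$ given by the space of $K$-ergodic components: since $G$ acts on this factor by permuting $K$-ergodic components, and since $(X,\mu)$ is $G$-ergodic, this $G$-action on $(Y,\nu)$ is ergodic, so by ergodicity $\nu$ is either non-atomic or is the uniform measure on a finite set on which $G$ acts transitively. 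In the first case, Theorem \ref{INTRO STAB} forces $\nu$-a.e.\ stabilizer to be trivial; but the stabilizer of a $K$-ergodic component contains $K$, which is non-trivial — contradiction. Hence $(Y,\nu)$ is essentially a finite transitive $G$-set, so the number of $K$-ergodic components is $[G : G_Y]$ where $G_Y$ is the (finite-index, hence non-trivial since $G$ is infinite) stabilizer of a component, and this stabilizer contains $K$.

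So the real content is to produce a single $n$ that works for all $K$ simultaneously. I would argue as follows. By the above, for each non-trivial normal $K$ the number of $K$-ergodic components equals $[G:H_K]$ for some finite-index subgroup $H_K$ with $K \leq H_K \lhd G$ (normality of $H_K$ follows because $G/K$ acts on the finite set of components and $H_K/K$ is the stabilizer of a point in a transitive $G/K$-action — one should pass to the normal core, which only decreases index by a bounded amount, or note that since $K\lhd G$ the set of components carries a transitive $G/K$-action and we may take $H_K$ to be the kernel of $G \to \Sym(\text{components})$, which is normal). Now I claim there is an a priori bound on $[G:H_K]$. Consider the factor map realizing the $G$-ergodic action on the finitely many $K$-ergodic components; pull back a finite generating partition of $(X,\mu)$ of finite Shannon entropy (which exists since $f_G(X,\mu)$ is defined) and observe that the finite factor $G \acts G/H_K$ has f-invariant entropy equal to that of a transitive action on $[G:H_K]$ points, which Bowen computed: it is $(1-2r)\log m + \sum_{s\in S}\sH(s\cdot\beta \vee \beta)$ evaluated on the partition into points, and this quantity $\to -\infty$ as $m = [G:H_K] \to \infty$ for $r \geq 2$ (here non-cyclicity, i.e.\ $r \geq 2$, is essential). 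Since f-invariant entropy is monotone under factors in the appropriate sense — more precisely, a finite factor's f-invariant entropy, when defined, is at least $-\infty$ and is bounded below in terms of $f_G(X,\mu)$ via the formula and the fact that conditioning decreases entropy — the index $m$ cannot be too large. Taking $n$ to be the largest $m$ for which the finite transitive action on $m$ points has f-invariant entropy $\geq f_G(X,\mu)$ gives the uniform bound.

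The main obstacle I anticipate is making precise the step ``a finite factor's f-invariant entropy is bounded below in terms of $f_G(X,\mu)$.'' The subtlety is that f-invariant entropy need not be monotone under factors in general (indeed the excerpt explicitly warns that factors of finite-f-invariant-entropy actions can have undefined f-invariant entropy), so one cannot simply cite ``factors decrease entropy.'' The right move is probably to avoid f-invariant entropy of the factor altogether: instead, take a finite generating partition $\alpha$ of $(X,\mu)$, refine it by the finite partition $P$ of $X$ into preimages of the $m$ ergodic components (which is $G$-invariant up to the $G/H_K$ permutation action, so $\alpha \vee P$ is still generating with finite Shannon entropy), and use the new formula for f-invariant entropy promised in the abstract — the one underlying all the paper's results — to extract a term that grows like $-(\text{something})\cdot\log m$ when the action permutes $m$ atoms transitively, while the whole expression must stay $\geq f_G(X,\mu) > -\infty$. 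This pins down $m$. Getting the combinatorics of this last inequality right, and confirming that the paper's new entropy formula indeed isolates such a term, is where the work lies; everything else is a clean application of Theorem \ref{INTRO STAB} plus the structure theory of normal subgroups of free groups.
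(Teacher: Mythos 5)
Your first half is exactly the paper's argument: regard the space of $K$-ergodic components, i.e.\ $(\M_K(X),\tau_K)$, as a factor of $G \acts (X,\mu)$ via the $\sigma$-algebra of $K$-invariant sets, observe that the $G$-stabilizer of $\tau_K$-almost every component contains the non-trivial normal subgroup $K$, and apply Theorem \ref{MAIN THEOREM} to force $\tau_K$ to be purely atomic, hence by $G$-ergodicity uniform on a single finite orbit of some size $m$. That part is correct. (The detour through $H_K$, normal cores, and the kernel of $G \to \Sym(\text{components})$ is unnecessary: all that is needed is that the factor is an $m$-point uniform space, whose f-invariant entropy is $-(r-1)\log m$ by Lemma \ref{LEM FINACT}; normality of a point stabilizer plays no role.)

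The genuine gap is the step you yourself flag: the uniform bound on $m$. You correctly sense that one cannot invoke a general ``factors decrease entropy'' principle, but you then leave the required inequality unproved, offering only an uncompleted plan via the independence-decay formula applied to $\alpha \vee P$. The missing ingredient is precisely Bowen's factor inequality, stated in the paper as Theorem \ref{BOWEN FACTOR}: if $\alpha$ is a finite Shannon entropy generating partition for $(X,\mu)$ and the factor's f-invariant entropy is \emph{defined} (automatic here, since the partition of an $m$-point space into points is finite and generating), then $f_G(Y,\nu) \geq f_G(X,\mu) - \sH(\alpha)$. Combined with Lemma \ref{LEM FINACT} this gives $-(r-1)\log(m) \geq f_G(X,\mu) - \sH(\alpha)$, so $m$ is at most the largest $n$ with $-(r-1)\log(n) \geq f_G(X,\mu) - \sH(\alpha)$ (here $r>1$ is used), a bound independent of $K$ --- which is exactly how the paper concludes. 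Note also that your closing criterion ``entropy $\geq f_G(X,\mu)$'' omits the $\sH(\alpha)$ loss, which must be allowed for. Your sketched workaround can in fact be pushed through (since the finite partition $P$ into component preimages is $G$-invariant, the relative identity $f_G(X,\mu/\Sigma) = f_G(X,\mu) - f_G(Y,\nu)$ from Section \ref{SEC PRE} together with $f_G(X,\mu/\Sigma) \leq \sH(\alpha \vee P/\Sigma) \leq \sH(\alpha)$, a consequence of Theorem \ref{THM DISS}, yields the same bound), but as written it is a program rather than a proof, and it ultimately rests on the same Bowen results you were trying to avoid.
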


In particular, if $\Gamma \leq G$ contains a non-trivial normal subgroup of $G$ then the number of ergodic components of $\Gamma \acts (X, \mu)$ is at most $n$. Notice that the subgroup $\Gamma$ is not required to have finite index in $G$. After proving the above corollary in Section \ref{SEC MAIN}, we will present a construction due to Lewis Bowen which demonstrates that the above corollary cannot in general be extended to hold for all non-trivial subgroups of $G$ (see Proposition \ref{BOWEN EX}).

We prove our main theorem, Theorem \ref{INTRO STAB}, by studying the ergodic components of the action. This requires us to understand how f-invariant entropy behaves with respect to ergodic decompositions. We obtain the following.

\begin{thm} \label{INTRO ERGDEC}
Let $G$ be a finitely generated free group of rank $r$ acting on a standard probability space $(X, \mu)$ by measure preserving bijections. Assume that $f_G(X, \mu)$ is defined. If $\tau$ is the ergodic decomposition of $\mu$ then $f_G(X, \nu)$ is defined for $\tau$-almost every ergodic measure $\nu$ and
$$f_G(X, \mu) = \int f_G(X, \nu) d \tau - (r - 1) \cdot \sH(\tau).$$
\end{thm}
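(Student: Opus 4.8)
The plan is to work with a fixed finite Shannon entropy generating partition $\alpha$ for $G \acts (X,\mu)$, and to track what happens to the defining expression $F_G(X,\mu,S,\B{n}{S}\cdot\alpha)$ under the ergodic decomposition $\mu = \int \nu\, d\tau$. The first step is to reduce to the case where the ergodic decomposition map $X \to \mathrm{Erg}$ has countable range. Indeed, since $\alpha$ has finite Shannon entropy and generates, the $\sigma$-algebra of $G$-invariant sets is, up to null sets, countably generated; but more to the point, the quantity $\sH(\tau)$ appearing in the formula is finite only when $\tau$ is purely atomic, so one should first argue that \emph{defined} f-invariant entropy forces $\tau$ to be supported on countably many ergodic measures. (This is exactly the kind of statement Theorem \ref{INTRO STAB} packages for factors; here the relevant factor is the space of ergodic components, on which $G$ acts trivially, so every point has stabilizer $G$ and hence must be an atom.) Thus write $\mathrm{Erg} = \{\nu_i\}_{i\in I}$ with weights $p_i = \tau(\{\nu_i\})$, $I$ countable, and $\mu = \sum_i p_i \nu_i$; if this reduction fails then $\sH(\tau) = \infty$ and one must instead show $f_G(X,\mu) = -\infty$, which should follow from the same estimates below by monotone/dominated convergence.

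The second, computational step is a convexity/concavity identity for the functional $F_G$ under convex combinations of measures supported on the fixed partition $\alpha$ and its translates. For a fixed finite partition $\beta$ (here $\beta = \B{n}{S}\cdot\alpha$), Shannon entropy satisfies the standard grouping/concavity relation: if $\mu = \sum_i p_i\nu_i$ then
\[
\sH_\mu(\beta) = \sum_i p_i\, \sH_{\nu_i}(\beta) + \sH(p) - \left( \sH(p) - \sum_i p_i\, \sH_{\nu_i}(\beta|\mathcal{I}) \right),
\]
but the clean statement I want is the exact one: letting $\hat\beta$ be the common refinement of $\beta$ with the (countable) partition of $X$ into ergodic components, one has $\sH_\mu(\hat\beta) = \sum_i p_i\,\sH_{\nu_i}(\beta) + \sH(p)$, and $\sH_\mu(\beta) = \sH_\mu(\hat\beta) - \sH_\mu(\,\cdot\mid\beta)$ for the component partition given $\beta$. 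Plugging this into
\[
F_G(X,\mu,S,\beta) = (1-2r)\sH_\mu(\beta) + \sum_{s\in S}\sH_\mu(s\cdot\beta\vee\beta),
\]
and doing the same for each $s\cdot\beta\vee\beta$, the coefficient of $\sH(p)$ comes out to $(1-2r) + 2r = 1 - 2r + |S| + \dots$; carefully, since each of the $r$ terms $\sH(s\cdot\beta\vee\beta)$ contributes one copy of $\sH(p)$ while the term $(1-2r)\sH(\beta)$ contributes $(1-2r)$ copies, the net coefficient is $(1-2r) + r = 1 - r = -(r-1)$. This is precisely the $-(r-1)\sH(\tau)$ in the statement. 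The analogous bookkeeping for the conditional correction terms must be shown to vanish in the limit, using that the component partition is $G$-invariant so it is ``absorbed'' once $n$ is large.

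The third step is to pass to the limit $n\to\infty$. One shows that $\beta_n := \B{n}{S}\cdot\alpha$ is eventually a generating sequence for each ergodic component $\nu_i$ as well (a generating partition for $\mu$ generates for each ergodic component, up to the component's null sets), so that $\lim_n F_G(X,\nu_i,S,\beta_n) = f_G(X,\nu_i)$ for each $i$, and in particular $f_G(X,\nu_i)$ is defined for $\tau$-a.e.\ $i$. Since the terms $F_G$ are non-increasing in $n$ (Bowen), one can interchange $\lim_n$ with $\sum_i p_i$ by monotone convergence applied to $\sH(\alpha) - F_G(X,\nu_i,S,\beta_n) \geq 0$, obtaining $\lim_n \sum_i p_i F_G(X,\nu_i,S,\beta_n) = \int f_G(X,\nu)\,d\tau$ (this also forces the integral to be well-defined, possibly $-\infty$). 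Combining with the exact identity from step two and noting the correction terms tend to $0$ yields
\[
f_G(X,\mu) = \int f_G(X,\nu)\,d\tau \;-\; (r-1)\,\sH(\tau).
\]

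The main obstacle I anticipate is step three's interchange of limit and (possibly infinite) sum together with the vanishing of the conditional correction terms: one needs a uniform-in-$i$ control, or at least an integrable dominating function, for the non-negative quantities $\sH(\alpha) - F_G(X,\nu_i,S,\beta_n)$, and one needs to verify that $\alpha$ restricted to a component genuinely generates there (this can fail on a null set of components and must be handled) and that $\sH_{\nu_i}(\alpha) \leq \sH_\mu(\alpha) + $ (something $\tau$-integrable). The secondary obstacle is the very first reduction — ruling out the case of uncountably many ergodic components without circularly invoking Theorem \ref{INTRO STAB}; I expect this to follow from the ``new formula for f-invariant entropy'' advertised in the introduction, which presumably expresses $f_G$ in a way that manifestly charges $-\infty$ when the invariant $\sigma$-algebra is not purely atomic.
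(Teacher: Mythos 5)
Your treatment of the countable case (your steps two and three) is essentially the paper's argument: the $-(r-1)\sH(\tau)$ bookkeeping, the fact that a generating partition for $\mu$ generates for almost every component with $\int \sH_\nu(\alpha)\, d\tau \leq \sH_\mu(\alpha)$ (Jensen), and monotone convergence to interchange the limit in $n$ with the sum over components all appear there. The only real difference is cosmetic: instead of estimating your ``conditional correction terms,'' the paper adjoins the partition $\xi$ into components to the generating partition (legitimate since $\sH(\xi) = \sH(\tau) < \infty$, so $\alpha \vee \xi$ is still generating with finite Shannon entropy), so that every $\B{n}{S} \cdot (\alpha \vee \xi)$ refines $\xi$ and the identity $F_G(X,\mu,S,\beta) = (1-r)\sH(\tau) + \sum_i p_i F_G(X,\nu_i,S,\beta)$ is exact at every finite stage; your martingale-convergence route for $\sH(\xi / \B{n}{S}\cdot\alpha) \to 0$ would also work but requires extra verification.

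The genuine gap is the case $\sH(\tau) = \infty$, which you leave unresolved. Your primary suggestion, deriving countability of the ergodic decomposition from Theorem \ref{INTRO STAB} applied to the space of ergodic components, is circular: Theorem \ref{INTRO STAB} is proved in Section \ref{SEC MAIN} using precisely this theorem and Corollary \ref{COR ERGDEC}. It is also false that merely \emph{defined} f-invariant entropy forces countably many components; that requires $f_G(X,\mu) \neq -\infty$, and the countability statement is a corollary of the present theorem, not an available input. Your fallback (``show $f_G(X,\mu) = -\infty$ by the same estimates and monotone/dominated convergence'') does not go through as stated, because your exact identity is established only for countable decompositions into mutually singular measures, while when $\sH(\tau) = \infty$ the ergodic decomposition may be uncountable (or countable with infinite entropy), so there is no sum to pass to the limit over. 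The paper closes this case with a coarsening trick you would need: choose finite partitions $\Lambda_n$ of $\M(X)$ with $\sH_\tau(\Lambda_n) \to \infty$, average $\tau$ over each cell to get $m_\lambda = \tau(\lambda)^{-1} \int_{\nu \in \lambda} \nu \, d\tau$; these finitely many measures are mutually singular, the resulting decomposition $\zeta_n$ has $\sH(\zeta_n) = \sH_\tau(\Lambda_n) < \infty$, and the countable-case identity together with $f_G(X, m_\lambda) \leq \sH_{m_\lambda}(\alpha)$ and Jensen yields $f_G(X,\mu) \leq \sH_\mu(\alpha) - (r-1)\sH_\tau(\Lambda_n) \to -\infty$, while the right-hand side of the claimed formula is likewise $-\infty$ since $\int f_G(X,\nu)\, d\tau \leq \sH_\mu(\alpha)$. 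Finally, you never address measurability of $\nu \mapsto f_G(X,\nu)$, which is needed for $\int f_G(X,\nu)\, d\tau$ to make sense when $\tau$ is not purely atomic; the paper verifies that $\nu \mapsto \sH_\nu(\beta)$, hence $\nu \mapsto F_G(X,\nu,S,\B{n}{S}\cdot\alpha)$, hence the limit $f_G(X,\nu)$, is Borel on a $\tau$-conull set.
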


From this we obtain another consequence of having finite f-invariant entropy.

\begin{cor} \label{INTRO ERG}
Let $G$ be a finitely generated non-cyclic free group acting on a standard probability space $(X, \mu)$ by measure preserving bijections. Assume that $f_G(X, \mu)$ is defined. If $f_G(X, \mu) \neq - \infty$ then the action has only countably many ergodic components.
\end{cor}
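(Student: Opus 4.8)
The plan is to read the conclusion off the ergodic decomposition identity of Theorem~\ref{INTRO ERGDEC}, using crucially that $r\ge 2$ because $G$ is non-cyclic.

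First I would fix a generating partition $\alpha$ of $(X,\mu)$ with $\sH(\alpha)<\infty$, which exists since $f_G(X,\mu)$ is defined, and write the ergodic decomposition as $\mu=\int\nu\,d\tau(\nu)$. Two routine facts about the components are needed. For $\tau$-almost every $\nu$ the partition $\alpha$ remains generating for $G\acts(X,\nu)$; this follows from a standard argument (fix a countable algebra of sets generating the Borel structure, approximate each of its members modulo $\mu$-null sets by members of the $G$-invariant $\sigma$-algebra generated by $\alpha$, and note that these countably many approximations remain valid modulo $\nu$-null sets for $\tau$-a.e.\ $\nu$). And by concavity of Shannon entropy in the measure, $\int\sH_\nu(\alpha)\,d\tau\le\sH_\mu(\alpha)<\infty$, so $\sH_\nu(\alpha)<\infty$ for $\tau$-a.e.\ $\nu$. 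Hence $\alpha$ is a finite Shannon entropy generating partition for $\tau$-a.e.\ component, so Bowen's universal bound gives $f_G(X,\nu)\le\sH_\nu(\alpha)$, and integrating,
$$\int f_G(X,\nu)\,d\tau\ \le\ \int\sH_\nu(\alpha)\,d\tau\ \le\ \sH_\mu(\alpha)\ <\ \infty.$$
Now I would invoke Theorem~\ref{INTRO ERGDEC}, which asserts that $f_G(X,\nu)$ is defined for $\tau$-a.e.\ $\nu$ and
$$f_G(X,\mu)=\int f_G(X,\nu)\,d\tau-(r-1)\cdot\sH(\tau).$$
Since $(r-1)\sH(\tau)\ge 0$, if the integral on the right were $-\infty$ then the right-hand side would be $-\infty$, contradicting $f_G(X,\mu)\ne-\infty$; hence that integral is finite (it is also bounded above, by the previous display). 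Next, if $\sH(\tau)$ were infinite then $(r-1)\sH(\tau)=+\infty$ because $r\ge 2$, and the identity would again force $f_G(X,\mu)=-\infty$; hence $\sH(\tau)$ is finite. Finally, a probability measure of finite Shannon entropy is purely atomic (a measure with a non-trivial continuous part has infinite Shannon entropy), hence supported on a countable set, so $\tau$ is concentrated on countably many ergodic measures --- which is precisely the assertion that $G\acts(X,\mu)$ has only countably many ergodic components.

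The one genuinely delicate point, once Theorem~\ref{INTRO ERGDEC} is granted, is where non-cyclicity enters: the coefficient $r-1$ must be strictly positive in order to rule out $\sH(\tau)=+\infty$ (for $r=1$ one has $(r-1)\sH(\tau)=0$ regardless, and indeed finite-entropy $\Z$-actions can have a continuum of ergodic components). Apart from this, the argument reduces to the uniform upper bound on $\int f_G(X,\nu)\,d\tau$ established in the second paragraph together with some bookkeeping in the extended reals, and I anticipate no further obstacle.
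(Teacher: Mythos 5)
Your argument is correct and is essentially the paper's own: the paper deduces this corollary directly from (the proof of) Theorem \ref{INTRO ERGDEC}, noting that $f_G(X,\mu)\neq-\infty$ forces $\sH(\tau)<\infty$ because $r-1>0$, and that finite Shannon entropy makes $\tau$ purely atomic. Your extra care with the extended-real bookkeeping (bounding $\int f_G(X,\nu)\,d\tau$ above by $\sH_\mu(\alpha)$ via Lemma \ref{LEM GENERATOR}-type facts so the formula cannot degenerate to $\infty-\infty$) is a sound way to read the same conclusion off the statement rather than the proof of the theorem.
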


The final dynamical property we study is the Kolmogorov--Sinai entropy of the restricted actions of the cyclic subgroups of $G$. The following theorem demonstrates that actions having finite f-invariant entropy are quite complicated. Recall that a measure is purely atomic if it gives full measure to a countable set.

\begin{thm} \label{INTRO KOLM}
Let $G$ be a finitely generated non-cyclic free group acting on a standard probability space $(X, \mu)$ by measure preserving bijections. Assume that $f_G(X, \mu)$ is defined. If $\mu$ is not purely atomic and $f_G(X, \mu) \neq - \infty$ then for every $1_G \neq g \in G$ the restricted action of $\langle g \rangle$ on $(X, \mu)$ has infinite Kolmogorov--Sinai entropy.
\end{thm}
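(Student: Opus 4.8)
The plan is to prove the statement by contradiction: assuming some $1_G \neq g$ acts on $(X,\mu)$ with finite Kolmogorov--Sinai entropy, I would produce an ``over-efficient'' finite Shannon entropy generating partition and deduce that $f_G(X,\mu) = -\infty$.

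First I would reduce to an atomless, essentially free action. Split $X = X_{\mathrm{at}} \sqcup X_{\mathrm{cont}}$ into the (countable, $G$-invariant) set of atoms of $\mu$ and its complement; since $\mu$ is not purely atomic, $\mu(X_{\mathrm{cont}}) > 0$. Refining the ergodic decomposition of $\mu$ along the invariant partition $\{X_{\mathrm{at}}, X_{\mathrm{cont}}\}$ and applying Theorem \ref{INTRO ERGDEC} (whose correction term is finite here, being bounded by $\sH$ of a two-set partition) shows that the normalized restriction $\mu'$ of $\mu$ to $X_{\mathrm{cont}}$ has $f_G(X,\mu')$ defined and, since $f_G(X,\mu) \neq -\infty$, also $f_G(X,\mu') \neq -\infty$. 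Because $X_{\mathrm{cont}}$ is $G$-invariant, $h_\mu(g) \geq \mu(X_{\mathrm{cont}}) \cdot h_{\mu'}(g)$ for every $g$, so it suffices to prove the theorem for $(X,\mu')$. As $\mu'$ is atomless, Theorem \ref{INTRO STAB} applied to $(X,\mu')$ as a factor of itself gives that $\mu'$-almost every stabilizer is trivial, i.e.\ $G \acts (X,\mu')$ is essentially free. So from now on assume the action is atomless and essentially free, with $r = \rnk(G) \geq 2$ and $f_G(X,\mu) \neq -\infty$.

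Now fix $1_G \neq g$ and suppose, toward a contradiction, that $\langle g\rangle \cong \Z$ acts on $(X,\mu)$ with finite Kolmogorov--Sinai entropy. By Rokhlin's generator theorem, $\langle g\rangle$ then admits a countable generating partition $\gamma$ with $\sH(\gamma) < \infty$, meaning $\bigvee_{k\in\Z} g^k \cdot \gamma$ is the full measure algebra. In particular $\gamma$ is a finite Shannon entropy generating partition for the whole action $G \acts (X,\mu)$, so by the basis- and partition-independence of $f$-invariant entropy recalled above,
$$f_G(X,\mu) = \lim_{n\to\infty} F_G(X,\mu,S,\B{n}{S}\cdot\gamma)$$
for any free generating set $S$. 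Using $\sH(s\cdot\beta\vee\beta) = \sH(\beta) + \sH(s\cdot\beta\mid\beta)$ one rewrites $F_G(X,\mu,S,\beta) = -(r-1)\sH(\beta) + \sum_{s\in S}\sH(s\cdot\beta\mid\beta)$, in which the leading coefficient $-(r-1)$ is strictly negative because $G$ is non-cyclic. The goal is then to show that for $\beta_n = \B{n}{S}\cdot\gamma$ this negative term dominates, so that $F_G(X,\mu,S,\beta_n) \to -\infty$, contradicting $f_G(X,\mu) \neq -\infty$. Morally this is forced because all information in $\gamma$ is carried by the infinite-index subgroup $\langle g\rangle$: spreading $\gamma$ over the exponentially large ball $\B{n}{S}$ inflates $\sH(\beta_n)$ strictly faster than it inflates the conditional terms $\sum_{s\in S}\sH(s\cdot\beta_n\mid\beta_n)$. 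This last step is where I would invoke the new formula for $f$-invariant entropy established earlier in the paper, using it to rewrite $f_G(X,\mu)$ in a form that isolates the contribution of the $\langle g\rangle$-direction, so that the existence of the $\langle g\rangle$-generating partition $\gamma$ of finite Shannon entropy visibly makes $f_G(X,\mu) = -\infty$.

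I expect this growth-rate comparison to be the main obstacle. The delicate point is that $\sH(\B{n}{S}\cdot\gamma)$ and $\sum_{s\in S}\sH(s\cdot\B{n}{S}\cdot\gamma \vee \B{n}{S}\cdot\gamma)$ are both of order $|\B{n}{S}|$, which grows exponentially in $n$, so crude estimates on the conditional entropies are far too lossy; one needs the finer accounting supplied by the new formula to see that the $-(r-1)\sH$ term wins. (By comparison, Bowen and Gutman \cite{BG} obtained only that \emph{some} cyclic subgroup acts with infinite Kolmogorov--Sinai entropy; the point of the theorem is to upgrade ``some'' to ``every''.) The reductions in the first two paragraphs and the passage to the generator $\gamma$ are routine by comparison.
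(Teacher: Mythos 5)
Your reductions in the first paragraph are essentially fine (with a small repair: Theorem \ref{INTRO ERGDEC} is stated only for the ergodic decomposition, so for the two-piece invariant decomposition $\{X_{\mathrm{at}}, X_{\mathrm{cont}}\}$ you need the countable mutually-singular-decomposition computation inside the proof of Theorem \ref{THM ERGDEC}, or you must group the ergodic components into atomless and purely atomic families; also, invoking Theorem \ref{INTRO STAB} here is legitimate since its proof in the paper does not use the present theorem). The first genuine problem appears when you pass from ``$\langle g\rangle$ acts with finite Kolmogorov--Sinai entropy'' to ``$\langle g\rangle$ admits a generating partition $\gamma$ with $\sH(\gamma)<\infty$.'' The Rokhlin/Krieger generator theorems you cite are for \emph{ergodic} transformations, and the $\langle g\rangle$-action is essentially never ergodic in this setting (its ergodic decomposition is typically uncountable). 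There are generator theorems for aperiodic non-ergodic systems, but finiteness of $\h_{\langle g\rangle}(X,\mu)$ alone does not hand you $\gamma$ without a precise statement and argument, so this step needs real justification.

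The decisive gap, however, is the final step: the claim that the existence of a finite Shannon entropy generating partition for the infinite-index subgroup $\langle g\rangle$ forces $f_G(X,\mu)=-\infty$ is exactly the mathematical content of the theorem, and your sketch leaves it as an acknowledged placeholder (``this is where I would invoke the new formula \dots visibly makes $f_G(X,\mu)=-\infty$''). The only prior result in this direction, \cite[Corollary 1.2]{S12}, assumes $f_G(X,\mu)>0$ and so cannot be quoted to cover $f_G(X,\mu)\in(-\infty,0]$; and, as you yourself observe, $\sH(\B{n}{S}\cdot\gamma)$ and the conditional terms are both of order $|\B{n}{S}|$, so ``the $-(r-1)\sH$ term wins'' is precisely what has to be proved, not a consequence one can read off. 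For comparison, the paper's proof does not argue by contradiction through generators at all: it bounds $\h_{\langle g\rangle}(X,\mu,\B{n}{S}\cdot\alpha)$ from below by $\limsup_k \sH\bigl(g^k\B{n}{S}\cdot\alpha \,/\, \Pre(g^k)\B{n}{S}\cdot\alpha\bigr)$, telescopes the independence decay along a geodesic from the first letter $t$ of $g$ to $g^k$ (Lemma \ref{LEM PATH}), shows that the total independence decay of the partitions $\B{n}{S}\cdot\alpha$ tends to $0$ when $f_G(X,\mu)\neq-\infty$ (Lemma \ref{LEM SMALLSUM}), and then uses Lemma \ref{LEM TECH} together with Corollary \ref{COR GROWTH} to get $\h_{\langle g\rangle}(X,\mu,\B{n}{S}\cdot\alpha)\geq \sH(\B{n-1}{S}\cdot\alpha/\B{n-2}{S}\cdot\alpha)-2\epsilon\rightarrow\infty$. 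Some argument of this precision (or a genuine substitute for it) is required where your proposal currently defers to ``the finer accounting''; without it the proof is not there.
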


For actions of $\Z$ the f-invariant entropy is equal to the Kolmogorov--Sinai entropy when the former is defined. A natural extension of the above theorem would be to determine the f-invariant entropies for the restricted actions of finitely generated subgroups of $G$ (when they are defined). In the case of subgroups of finite index, it was determined by the author in \cite{S12} that the f-invariant entropy is scaled by the index of the subgroup, generalizing a well known property of Kolmogorov--Sinai entropy.

The proof of Theorems \ref{INTRO STAB} and \ref{INTRO KOLM} rely heavily on a new formula for f-invariant entropy. Let us briefly describe this new formula. Fix a free generating set $S$ for $G$ and let $|g|$ denote the reduced $S$-word length of $g \in G$. We define a well ordering $\preceq$ on $G$ as follows. If $|g| < |h|$ then we declare $g \preceq h$. If $|g| = |h|$ then we use a fixed total ordering of $S \cup S^{-1}$ and declare $g \preceq h$ if and only if the reduced $S$-word representation of $g$ lexicographically precedes the reduced $S$-word representation of $h$. Specifically if $|g| = |h| = n$, $g = g_1 g_2 \cdots g_n$, and $h = h_1 h_2 \cdots h_n$ are the reduced $S$-word representations of $g$ and $h$, then $g \preceq h$ if $g = h$ or if $g_i$ is less than $h_i$ for the first $i$ with $g_i \neq h_i$. For $g \in G$ we let $\Pre(g)$ be the set of all group elements which strictly precede $g$. Let $G$ act by measure preserving bijections on a standard probability space $(X, \mu)$. Assume that there is a generating partition $\alpha$ having finite Shannon entropy. Fix $1_G \neq g \in G$. Let $s \in S \cup S^{-1}$ be such that $|s^{-1} g| = |g| - 1$. The \emph{independence decay at $g$ relative to $(S, \alpha)$} is
$$\delta_S(g, \alpha) := \sH(s^{-1} g \cdot \alpha / \Pre(s^{-1} g) \cdot \alpha) - \sH(g \cdot \alpha / \Pre(g) \cdot \alpha).$$
We remark that $\delta_S(g, \alpha) \geq 0$. When $\delta_S(g, \alpha) = 0$, the partitions $g \cdot \alpha$ and $\Pre(g) \cdot \alpha$ are as independent as possible while respecting the fact that $G$ preserves the measure and while keeping $\Pre(s^{-1} g) \cdot \alpha$ fixed. The following new formula for f-invariant entropy is vital to our proofs as it provides much tighter control over the behavior of $f_G(X, \mu)$. We expect this formula to continue to play an important role in the study of f-invariant entropy.

\begin{thm} \label{INTRO DISS}
Let $G$ be a finitely generated free group acting on a standard probability space $(X, \mu)$ by measure preserving bijections. Assume that this action admits a generating partition $\alpha$ having finite Shannon entropy. If $S$ is a free generating set for $G$ then
$$f_G(X, \mu) = \sH(\alpha) - \frac{1}{2} \cdot \sum_{1_G \neq g \in G} \delta_S(g, \alpha).$$
\end{thm}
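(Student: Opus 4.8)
The plan is to match the defining sequence $F_G(X,\mu,S,\B{n}{S}\cdot\alpha)$ term by term against the partial sums of $\sum_{1_G\ne g}\delta_S(g,\alpha)$, exploiting that the Cayley graph of $G$ with respect to $S$ is a tree and that each ball $\B{n}{S}$ is downward closed for $\preceq$. Write $s(g)$ for the first letter of the reduced $S$-word of $g\ne 1_G$ and put $\mathrm{par}(g):=s(g)^{-1}g$, so that $G$ becomes a rooted tree in which $1_G$ has $2r$ children and every other vertex has $2r-1$ children (here $r=|S|$); also set $a_g:=\sH(g\cdot\alpha\,/\,\Pre(g)\cdot\alpha)$. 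Note that $\Pre(g)$ is finite since $G$ has finite rank, that $a_{1_G}=\sH(\alpha)$, and --- directly from the definition --- that $\delta_S(g,\alpha)=a_{\mathrm{par}(g)}-a_g$ for every $1_G\ne g$.

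First I would record a combinatorial telescoping identity. Since $h\preceq g$ forces $|h|\le|g|$, the ball $\B{n}{S}$ is downward closed for $\preceq$, so the $\preceq$-predecessors of $g$ that lie in $\B{n}{S}$ are exactly the elements of $\Pre(g)$; the chain rule for Shannon entropy then gives $\sH(\B{n}{S}\cdot\alpha)=\sum_{|g|\le n}a_g$. Summing $\delta_S(g,\alpha)=a_{\mathrm{par}(g)}-a_g$ over $1_G\ne g$ with $|g|\le n+1$, grouping the terms $a_{\mathrm{par}(g)}$ by parent and using the branching numbers of the tree, everything telescopes to
$$\tfrac12\sum_{\substack{1_G\ne g\\ |g|\le n+1}}\delta_S(g,\alpha)=\sH(\alpha)-B_{n+1},\qquad B_{n+1}:=\tfrac12\,\sH(\B{n+1}{S}\cdot\alpha)-\bigl(r-\tfrac12\bigr)\sH(\B{n}{S}\cdot\alpha).$$
Hence $(B_m)_m$ is non-increasing and $\lim_m B_m=\sH(\alpha)-\tfrac12\sum_{1_G\ne g}\delta_S(g,\alpha)\in[-\infty,\sH(\alpha)]$; it remains to identify this limit with $f_G(X,\mu)=\lim_n F_G(X,\mu,S,\B{n}{S}\cdot\alpha)$.

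Next I would rewrite $F_G$ in a parallel form. Applying $s^{-1}$ gives $\sH(\beta\vee s\cdot\beta)=\sH(\beta\vee s^{-1}\cdot\beta)$, so $F_G(X,\mu,S,\beta)=(1-2r)\sH(\beta)+\tfrac12\sum_{s\in S\cup S^{-1}}\sH(\beta\vee s\cdot\beta)$. For $\beta=\B{n}{S}\cdot\alpha$ one has $\B{n}{S}\cup s\B{n}{S}=\B{n}{S}\cup X_s$, where $X_s:=\{g:|g|=n+1,\ s(g)=s\}$ is the slice of the sphere $\Sp{n+1}{S}$ sitting in the branch of $s$; therefore $F_G(X,\mu,S,\B{n}{S}\cdot\alpha)=(1-r)\sH(\B{n}{S}\cdot\alpha)+\tfrac12\sum_{s\in S\cup S^{-1}}\sH(X_s\cdot\alpha\,/\,\B{n}{S}\cdot\alpha)$. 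Comparing with the formula for $B_{n+1}$ (and using $\sH(\B{n+1}{S}\cdot\alpha)-\sH(\B{n}{S}\cdot\alpha)=\sH(\Sp{n+1}{S}\cdot\alpha\,/\,\B{n}{S}\cdot\alpha)$) gives $F_G(X,\mu,S,\B{n}{S}\cdot\alpha)=B_{n+1}+\varepsilon_n$, with
$$\varepsilon_n:=\tfrac12\Bigl(\sum_{s\in S\cup S^{-1}}\sH(X_s\cdot\alpha\,/\,\B{n}{S}\cdot\alpha)-\sH(\Sp{n+1}{S}\cdot\alpha\,/\,\B{n}{S}\cdot\alpha)\Bigr)\ge 0,$$
which is half the conditional total correlation of the family $\{X_s\cdot\alpha:s\in S\cup S^{-1}\}$ given $\B{n}{S}\cdot\alpha$ (nonnegativity is Shannon subadditivity, as $\Sp{n+1}{S}=\bigsqcup_s X_s$). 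In particular $F_G(X,\mu,S,\B{n}{S}\cdot\alpha)\ge B_{n+1}$, so already $f_G(X,\mu)\ge\lim_m B_m$.

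The remaining, and by far the hardest, step is to prove $\varepsilon_n\to 0$; this finishes everything, since then $f_G(X,\mu)=\lim_n(B_{n+1}+\varepsilon_n)=\lim_m B_m=\sH(\alpha)-\tfrac12\sum_{1_G\ne g}\delta_S(g,\alpha)$, with the value $-\infty$ automatically covered (if $\sum_g\delta_S(g,\alpha)=\infty$ then $\lim_m B_m=-\infty$, hence $f_G(X,\mu)=-\infty$). The mechanism is that the branch slices lie in distinct ``tail'' $\sigma$-algebras: if $\mathcal T_t$ denotes the $\sigma$-algebra generated by $\alpha$ together with $\{g\cdot\alpha:s(g)=t\}$, then $X_t\cdot\alpha\le\mathcal T_t$ for every $n$, and since $\alpha$ is generating the $\mathcal T_t$ ($t\in S\cup S^{-1}$) jointly generate the whole $\sigma$-algebra while $\B{n}{S}\cdot\alpha$ increases to the whole $\sigma$-algebra. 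Conditioning on the ever larger $\B{n}{S}\cdot\alpha$, which lies between the individual $\mathcal T_t$ and their join and separates them in the tree, should destroy all correlation among the branch slices; using $\sH(\alpha)<\infty$ and the martingale/continuity of conditional Shannon entropy along $\B{n}{S}\cdot\alpha$, I would make this rigorous and conclude $\varepsilon_n\to 0$. The real difficulty is that the $X_t\cdot\alpha$ are not a fixed finite partition --- their cardinality and Shannon entropy grow with $n$ --- so one cannot simply quote $\sH(\mathcal P\,/\,\B{n}{S}\cdot\alpha)\to 0$ for a fixed finite $\mathcal P$; the argument must exploit the separation into distinct tail directions together with a careful truncation of the $\mathcal T_t$ to finite sub-partitions before taking $n\to\infty$.
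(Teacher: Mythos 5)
Your first two steps are correct and essentially reproduce the paper's machinery in reverse: your quantity $B_{n+1}$ is exactly the paper's auxiliary expression $F'_G(X,\mu,S,\alpha,n)=(1-r)\sH(\B{n}{S}\cdot\alpha)+\tfrac12\sH(\B{n+1}{S}\cdot\alpha/\B{n}{S}\cdot\alpha)$, your telescoping identity is the content of the proof of Theorem \ref{THM DISS}, and your inequality $F_G(X,\mu,S,\B{n}{S}\cdot\alpha)=B_{n+1}+\varepsilon_n$ with $\varepsilon_n\ge0$ is the first inequality of Lemma \ref{LEM SPINEQ}. The gap is the step you yourself flag as hardest, and it is not merely unfinished: the statement you propose to prove, namely $\varepsilon_n\to0$ with no hypothesis beyond the existence of a finite-entropy generating partition, is \emph{false}. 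Take $G=F_2=\langle a,b\rangle$ acting through the abelianization $F_2\to\Z^2$ on a Bernoulli shift over $\Z^2$ with base entropy $H=\log 2$, with $\alpha$ the coordinate partition at the origin (a finite-entropy generating partition). The image of $\B{n}{S}$ is the $\ell^1$-ball of radius $n$ in $\Z^2$, the new sites contributed by each branch slice $X_s$ form the half of the $\ell^1$-sphere of radius $n+1$ on the side of $s$ ($2n+1$ sites each), while the whole sphere contributes only $4(n+1)$ sites; by independence of coordinates, $2\varepsilon_n=\bigl(4(2n+1)-4(n+1)\bigr)H=4nH\to\infty$. So the conditional correlation among the branch slices can grow linearly, and the heuristic that conditioning on the growing ball ``destroys all correlation'' among the tail $\sigma$-algebras $\mathcal T_t$ cannot be made rigorous by any martingale or truncation argument, because the assertion itself fails. (In this example $f_G(X,\mu)=-\infty$, consistent with the theorem, since $\sum_g\delta_S(g,\alpha)=\infty$ as well.)

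What is actually true, and what the paper proves (Lemma \ref{FINV SPHERE}), is that $\varepsilon_n\to0$ \emph{under the assumption} $f_G(X,\mu)\ne-\infty$, and the finiteness hypothesis must enter the argument. The mechanism is quantitative, not soft: one writes the consecutive difference $2F_G(X,\mu,S,\B{n}{S}\cdot\alpha)-2F_G(X,\mu,S,\B{n+1}{S}\cdot\alpha)$ as the sum of $2\varepsilon_n$ and a second non-negative term, whose non-negativity is the second inequality of Lemma \ref{LEM SPINEQ}, $\sum_{s\in S\cup S^{-1}}\sH(s\B{n+1}{S}\cdot\alpha/\B{n+1}{S}\cdot\alpha)\le(2r-1)\sH(\B{n+1}{S}\cdot\alpha/\B{n}{S}\cdot\alpha)$ (itself a nontrivial tree-combinatorial entropy estimate, proved by re-expanding $s\B{n+1}{S}\setminus\B{n+1}{S}$ branch by branch); since $F_G(X,\mu,S,\B{n}{S}\cdot\alpha)$ converges to the finite limit $f_G(X,\mu)$, its consecutive differences tend to $0$, forcing both non-negative summands, in particular $\varepsilon_n$, to vanish in the limit. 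This second inequality is entirely absent from your proposal and is the crux. Note also that once you restrict the claim $\varepsilon_n\to0$ to the case $f_G(X,\mu)\ne-\infty$, your treatment of the degenerate case should be reorganized: if $f_G(X,\mu)=-\infty$, the one-sided inequality $B_{n+1}\le F_G(X,\mu,S,\B{n}{S}\cdot\alpha)$ that you already have gives $\lim_m B_m\le f_G(X,\mu)=-\infty$ and hence equality directly, so no control of $\varepsilon_n$ is needed there; your parenthetical deduction ``$\sum_g\delta_S(g,\alpha)=\infty$ implies $f_G(X,\mu)=-\infty$'' runs in the direction that genuinely requires $\varepsilon_n\to0$ and cannot be obtained without the finiteness-based argument above applied in its contrapositive form.
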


We will actually prove Theorems \ref{INTRO KOLM} and \ref{INTRO DISS} in the more general context of relative f-invariant entropy and relative Kolmogorov--Sinai entropy of the action relative to a factor action. In Section \ref{SEC KOLM} we answer a question of Lewis Bowen \cite{B10d} by showing that a certain expression is indeed equal to the relative f-invariant entropy (Corollary \ref{COR RFORMULA}).

\subsection*{Organization}
Notation, definitions, and some facts regarding f-invariant entropy are discussed in Section \ref{SEC PRE}. We also deduce Corollary \ref{INTRO POSITIVE} in this section. In Section \ref{SEC DISS} we obtain two new formulas for f-invariant entropy and prove Theorem \ref{INTRO DISS}. We also prove that actions with finite f-invariant entropy cannot factor through a proper quotient of $G$. In Section \ref{SEC KOLM} we apply our new formula to obtain Theorem \ref{INTRO KOLM} and answer a question of Lewis Bowen. We study ergodic decompositions in Section \ref{SEC ERGDEC} and prove Theorem \ref{INTRO ERGDEC} and Corollary \ref{INTRO ERG}. The main theorem of the paper, Theorem \ref{INTRO STAB}, is proved in Section \ref{SEC MAIN}. At the end of Section \ref{SEC MAIN}, we prove Corollary \ref{INTRO SUBERG} and present a construction due to Lewis Bowen (Proposition \ref{BOWEN EX}). 

\subsection*{Acknowledgments}
This material is based upon work supported by the National Science Foundation Graduate Student Research Fellowship under Grant No. DGE 0718128. The author would like to thank his advisor, Ralf Spatzier, for helpful conversations. The author would also like to thank Lewis Bowen for sharing a construction (Proposition \ref{BOWEN EX}) which demonstrates that Corollary \ref{INTRO SUBERG} cannot be strengthened to hold for all non-trivial subgroups.

\section{Preliminaries} \label{SEC PRE}

Throughout this paper $G$ will always denote a finitely generated free group and $S$ will be a free generating set for $G$. The \emph{rank} of $G$ is the minimum size of a generating set for $G$, which in this case is simply $|S|$. We will denote the rank of $G$ by $r$. If $g \in G$ then the \emph{reduced $S$-word representation} of $g$ is the unique (possibly empty) sequence $(s_1, s_2, \ldots, s_n)$ where each $s_i \in S \cup S^{-1}$, $s_i \neq s_{i+1}^{-1}$, and $g = s_1 s_2 \cdots s_n$. The \emph{reduced $S$-word-length} of $g \in G$, denoted $|g|$, is the length of the reduced $S$-word representation of $g$. We let $\B{n}{S} = \{g \in G \: |g| \leq n\}$ be the ball of radius $n$ and $\Sp{n}{S} = \B{n}{S} \setminus \B{n-1}{S}$ be the sphere of radius $n$. We do not emphasize the dependence of $\B{n}{S}$ and $\Sp{n}{S}$ on $S$ as we will never use more than one generating set for $G$ simultaneously. The \emph{left $S$-Cayley graph of $G$} is the graph with vertex set $G$ and edge set $\{(g, s g) \: g \in G, s \in S \cup S^{-1}\}$.

We will use the term \emph{probability space} to always mean a standard Borel space equipped with a Borel probability measure. We will assume that all actions on probability spaces are by measure preserving bijections. If $G$ acts on $(X, \mu)$ then we let $\M(X)$ denote the set of $G$-invariant Borel probability measures on $X$ and we let $\E(X) \subseteq \M(X)$ denote the ergodic measures. When needed, we will write $\M_G(X)$ and $\E_G(X)$ to distinguish the acting group. The set $\M(X)$ is naturally a standard Borel space; its collection of Borel sets is defined to be the smallest $\sigma$-algebra making the maps $\nu \in \M(X) \mapsto \nu(B)$ measurable for every Borel set $B \subseteq X$ \cite[Theorem 17.24]{K95}. If $G$ acts on $(X, \mu)$ then the \emph{ergodic decomposition of $\mu$} is the unique Borel probability measure $\tau$ on $\M(X)$ satisfying $\tau(\E(X)) = 1$ and $\mu = \int \nu d \tau$ (meaning $\mu(B) = \int \nu(B) d \tau$ for every Borel set $B \subseteq X$). If $\pi: (X, \mu) \rightarrow (Y, \nu)$ is a measure-preserving factor map then the \emph{disintegration of $\mu$ with respect to $\nu$} is the unique (up to a $\nu$-null set) collection of probability measures $\{\mu_y \: y \in Y\}$ on $X$ such that $\mu_y(\pi^{-1}(y)) = 1$ for $\nu$-almost-every $y$ and $\mu = \int \mu_y d \nu$ (meaning $\mu(B) = \int \mu_y(B) d \nu$ for every Borel set $B \subseteq X$). 

If $\{\alpha_i \: i \in I\}$ is a finite collection of countable measurable partitions of $X$, then we let $\bigvee_{i \in I} \alpha_i$ denote the coarsest measurable partition of $X$ which is finer than each $\alpha_i$. If $I$ is infinite then we let $\bigvee_{i \in I} \alpha_i$ be the smallest $\sigma$-algebra containing every member of every $\alpha_i$. If $G \acts (X, \mu)$ and $\alpha = \{A_i \: i \in I\}$ is a countable measurable partition of $X$, then we define $g \cdot \alpha = \{g \cdot A_i \: i \in I\}$. For $F \subseteq G$ (finite or infinite) we define $F \cdot \alpha = \bigvee_{f \in F} f \cdot \alpha$. A countable measurable partition $\alpha$ is \emph{generating} for $G \acts (X, \mu)$ if for every Borel set $B \subseteq X$ there is $B' \in G \cdot \alpha$ with $\mu(B \symd B') = 0$.

If $\alpha$ and $\beta$ are countable measurable partitions of $X$ then the \emph{conditional Shannon entropy of $\alpha$ relative to $\beta$} is
$$\sH(\alpha / \beta) = \sum_{B \in \beta} \sum_{A \in \alpha} - \mu(B) \cdot \frac{\mu(B \cap A)}{\mu(B)} \cdot \log \left( \frac{\mu(B \cap A)}{\mu(B)} \right).$$
If $\beta = \{X\}$ is the trivial partition then $\sH(\alpha / \beta)$ equals the Shannon entropy of $\alpha$, $\sH(\alpha)$, as defined in the introduction. When needed we write $\sH_\mu(\alpha / \beta)$ and $\sH_\mu(\alpha)$ to clarify the measure being used. If $\Sigma$ is a sub-$\sigma$-algebra then the \emph{conditional Shannon entropy of $\alpha$ relative to $\Sigma$} is
$$\sH(\alpha / \Sigma) = \int \sH_{\mu_y}(\alpha) d \nu,$$
where $\{\mu_y \: y \in Y\}$ is the disintegration of $\mu$ with respect to the factor map $(X, \mu) \rightarrow (Y, \nu)$ induced by $\Sigma$ (this is not the standard definition, but it is equivalent; see \cite[Section I.1.5]{D}). If $(X, \mu)$ is a probability space then $\sH(\mu)$ is defined as the supremum of $\sH_\mu(\alpha)$ over all finite measurable partitions $\alpha$ of $X$. An \emph{atom} of $\mu$ is a point $x \in X$ with $\mu(\{x\}) > 0$. A probability measure is \emph{purely atomic} if the complement of the set of atoms has measure $0$. Since every probability space (in our sense) is isomorphic to an interval of the real line with Lebesgue measure together with a countable number of atoms, it is easy to show that $\mu$ is purely atomic if $\sH(\mu) < \infty$. The converse does not hold. The following lemma consists of some well known facts on Shannon entropy which we will need. The reader can consult \cite{W82} for a proof.

\begin{lem} \label{LEM SHAN}
Let $(X, \mu)$ be a standard probability space, let $\alpha, \beta, \xi$ be countable measurable partitions of $X$ and let $\Sigma$ be a sub-$\sigma$-algebra. Then
\begin{enumerate}
\item[\rm (i)] $\sH(\alpha / \beta) \geq 0$;
\item[\rm (ii)] $\sH(\alpha \vee \beta) = \sH(\alpha / \beta) + \sH(\beta)$;
\item[\rm (iii)] $\sH(\alpha \vee \beta / \Sigma) = \sH(\alpha / \beta \vee \Sigma) + \sH(\beta / \Sigma)$;
\item[\rm (iv)] $\sH(\alpha / \beta \vee \Sigma) \leq \sH(\alpha / \beta)$;
\item[\rm (v)] $\sH(\alpha \vee \xi / \beta \vee \xi) = \sH(\alpha / \beta \vee \xi)$.
\end{enumerate}
\end{lem}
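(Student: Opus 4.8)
The plan is to reduce all five identities to a small number of elementary facts about the function $\phi(t) = -t\log t$ on $[0,1]$ together with the tower property of disintegrations. The two analytic inputs are: $\phi(t)\ge 0$ for $t\in[0,1]$ (with the convention $\phi(0)=0$), and the joint concavity on $[0,\infty)^2$ of the perspective $g(p,q) = q\cdot\phi(p/q) = -p\log(p/q)$, which is equivalent to the joint convexity of relative entropy $(p,q)\mapsto p\log(p/q)$. The bookkeeping input is that for a factor inducing $\Sigma$ with disintegration $\{\mu_y\}$ one has $\sH(\gamma/\Sigma) = \int \sH_{\mu_y}(\gamma)\,d\nu$ by definition, and that iterated disintegration is consistent: disintegrating $\mu$ over $\beta\vee\Sigma$ is the same as first disintegrating over $\Sigma$ and then disintegrating each $\mu_y$ over the partition $\beta$. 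Items (i) and (ii) are then formal. For (i), every summand of $\sH(\alpha/\beta)$ equals $\mu(B)\cdot\phi\big(\mu(B\cap A)/\mu(B)\big)\ge 0$. For (ii) I would expand $\log\mu(A\cap B) = \log\mu(B) + \log\big(\mu(A\cap B)/\mu(B)\big)$ in the definition of $\sH(\alpha\vee\beta)$, split the double sum, and use $\sum_{A\in\alpha}\mu(A\cap B)=\mu(B)$ to recognize the two pieces as $\sH(\beta)$ and $\sH(\alpha/\beta)$.

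For (iii) and (v) I would pass through the disintegration. Applying the finite chain rule (ii) to each measure $\mu_y$ gives $\sH_{\mu_y}(\alpha\vee\beta) = \sH_{\mu_y}(\alpha/\beta)+\sH_{\mu_y}(\beta)$; integrating against $\nu$ yields $\sH(\alpha\vee\beta/\Sigma) = \int \sH_{\mu_y}(\alpha/\beta)\,d\nu + \sH(\beta/\Sigma)$, and the tower property identifies $\int\sH_{\mu_y}(\alpha/\beta)\,d\nu$ with $\sH(\alpha/\beta\vee\Sigma)$, proving (iii). For (v) I would apply (iii) with the conditioning taken to be the $\sigma$-algebra generated by $\beta\vee\xi$, writing $\sH(\alpha\vee\xi/\beta\vee\xi)=\sH(\alpha/\xi\vee(\beta\vee\xi))+\sH(\xi/\beta\vee\xi)$; the second term vanishes because $\xi$ is measurable with respect to $\beta\vee\xi$ (fiberwise it is a one-atom partition, so $\sH_{\mu_y}(\xi)=0$), and $\xi\vee(\beta\vee\xi)=\beta\vee\xi$, leaving exactly $\sH(\alpha/\beta\vee\xi)$.

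Item (iv) is the analytic heart. Using the tower property I would write $\sH(\alpha/\beta\vee\Sigma)=\int\sH_{\mu_y}(\alpha/\beta)\,d\nu = \sum_{A,B}\int g\big(\mu_y(A\cap B),\mu_y(B)\big)\,d\nu$ with $g(p,q)=-p\log(p/q)$. Since $\int\mu_y(A\cap B)\,d\nu = \mu(A\cap B)$ and $\int\mu_y(B)\,d\nu=\mu(B)$, joint concavity of $g$ and Jensen's inequality give $\int g\big(\mu_y(A\cap B),\mu_y(B)\big)\,d\nu\le g\big(\mu(A\cap B),\mu(B)\big)$ for each pair $(A,B)$; summing recovers $\sH(\alpha/\beta)$ and yields $\sH(\alpha/\beta\vee\Sigma)\le\sH(\alpha/\beta)$. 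When $\Sigma$ is generated by a countable partition $\gamma$, this specializes to the partition statement $\sH(\alpha/\beta\vee\gamma)\le\sH(\alpha/\beta)$, a convenient sanity check.

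The step I expect to require the most care is the tower property of disintegrations used throughout (iii)--(v), that is, the consistency of iterated conditioning, together with the attendant measurability of $y\mapsto\sH_{\mu_y}(\cdots)$ and the justification of interchanging sum and integral when some of the entropies involved may be infinite. Once these measure-theoretic points are secured, each identity collapses to either the elementary chain rule (ii) or the single convexity inequality for $g$. As all five statements are classical, I would present only this outline and otherwise defer to \cite{W82}.
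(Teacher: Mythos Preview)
Your outline is correct, and it matches the paper's treatment: the paper gives no proof at all and simply refers the reader to \cite{W82}, exactly as you propose at the end. Your sketch of the standard arguments for (i)--(v) is sound and more than the paper itself provides.
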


We will later need the following theorem which places a restriction on the f-invariant entropy of factors.

\begin{thm}[Bowen, \cite{B10c}] \label{BOWEN FACTOR}
Let $G$ be a finitely generated free group acting on a probability space $(X, \mu)$. Assume that this action admits a generating partition $\alpha$ with $\sH(\alpha) < \infty$. If $(Y, \nu)$ is a factor of $(X, \mu)$ and $f_G(Y, \nu)$ is defined, then
$$f_G(Y, \nu) \geq f_G(X, \mu) - \sH(\alpha).$$
\end{thm}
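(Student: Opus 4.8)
The plan is to derive this inequality from an Abramov--Rohlin type addition formula for the f-invariant entropy of $(X,\mu)$ relative to the factor $(Y,\nu)$. We may assume $f_G(X,\mu) \neq -\infty$ (otherwise the inequality is vacuous), so that $f_G(X,\mu)$ is finite since $f_G(X,\mu) \le \sH(\alpha) < \infty$. First I would reduce to a cleaner statement. As $f_G(Y,\nu)$ is defined, fix a generating partition $\beta$ for $G \acts (Y,\nu)$ with $\sH(\beta) < \infty$ and pull it back along the factor map $\pi\colon (X,\mu) \to (Y,\nu)$, still writing $\beta$ for the resulting partition of $X$. Then $\hat\alpha := \alpha \vee \beta$ is a generating partition for $G \acts (X,\mu)$ with $\sH(\hat\alpha) \le \sH(\alpha) + \sH(\beta) < \infty$, so $f_G(X,\mu) = \lim_n F_G(X,\mu,S,\B{n}{S}\cdot\hat\alpha)$; and since $\beta$ is a pull-back, the $\mu$-Shannon entropy of any partition built from $G$-translates of $\beta$ equals the corresponding $\nu$-Shannon entropy, so $F_G(X,\mu,S,\B{n}{S}\cdot\beta) = F_G(Y,\nu,S,\B{n}{S}\cdot\beta) \to f_G(Y,\nu)$. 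Thus it suffices to show $f_G(X,\mu) - f_G(Y,\nu) \le \sH(\alpha)$.

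Next I would introduce the relative f-invariant entropy. Let $\Sigma$ be the $G$-invariant sub-$\sigma$-algebra consisting of all $\pi$-preimages of measurable subsets of $Y$, and for a finite partition $\xi$ of $X$ put
$$F_G(X,\mu/\Sigma;S,\xi) := (1-2r)\,\sH(\xi/\Sigma) + \sum_{s\in S}\sH(s\cdot\xi \vee \xi/\Sigma), \qquad f_G(X,\mu/\Sigma) := \lim_{n\to\infty}F_G(X,\mu/\Sigma;S,\B{n}{S}\cdot\alpha).$$
One needs the relative analogues of Bowen's basic facts: this limit exists because the terms are non-increasing in $n$, and its value does not depend on the choice of finite Shannon entropy partition $\xi$ for which $G\cdot\xi$ together with $\Sigma$ generates the measure algebra of $X$ (our $\alpha$ qualifies, as $\alpha$ alone already generates). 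These are proved by running Bowen's arguments with every $\sH(\,\cdot\,)$ replaced by $\sH(\,\cdot\,/\Sigma)$, using the $G$-invariance of $\Sigma$ together with Lemma \ref{LEM SHAN}. The relative upper bound then drops out of the $n=0$ term: by Lemma \ref{LEM SHAN}(iii)--(iv), the $G$-invariance of $\Sigma$, and $|S| = r$,
$$F_G(X,\mu/\Sigma;S,\alpha) \le (1-2r)\,\sH(\alpha/\Sigma) + \sum_{s\in S}2\,\sH(\alpha/\Sigma) = \sH(\alpha/\Sigma) \le \sH(\alpha),$$
and by monotonicity of the terms, $f_G(X,\mu/\Sigma) \le \sH(\alpha)$.

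The heart of the matter is the addition formula $f_G(X,\mu) = f_G(Y,\nu) + f_G(X,\mu/\Sigma)$; in fact only the inequality $f_G(X,\mu) \le f_G(Y,\nu) + f_G(X,\mu/\Sigma)$ is needed. Its starting point is the exact identity obtained by applying the chain rule $\sH(\eta\vee\zeta) = \sH(\zeta) + \sH(\eta/\zeta)$ with the factor-measurable partitions $\B{n}{S}\cdot\beta$ and $(s\B{n}{S}\cup\B{n}{S})\cdot\beta$ in the role of $\zeta$:
$$F_G(X,\mu,S,\B{n}{S}\cdot\hat\alpha) - F_G(Y,\nu,S,\B{n}{S}\cdot\beta) = (1-2r)\,\sH(\B{n}{S}\cdot\alpha / \B{n}{S}\cdot\beta) + \sum_{s\in S}\sH((s\B{n}{S}\cup\B{n}{S})\cdot\alpha / (s\B{n}{S}\cup\B{n}{S})\cdot\beta).$$
The right-hand side agrees with $F_G(X,\mu/\Sigma;S,\B{n}{S}\cdot\alpha)$ except that each conditioning is on a finite $\Sigma$-measurable partition rather than on all of $\Sigma$, and their difference is a sum of conditional mutual informations $I(\,\cdot\,;\Sigma/\,\cdot\,) \ge 0$. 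I would show that this difference tends to $0$ as $n\to\infty$ (only $\limsup \le 0$ is needed for our inequality), using $\sH(\alpha) < \infty$ together with the martingale convergence $\sH(\B{n}{S}\cdot\alpha / \B{m}{S}\cdot\beta) \downarrow \sH(\B{n}{S}\cdot\alpha / \Sigma)$ as $m\to\infty$ and a bound on the tail uniform in $n$. Passing to the limit yields the addition inequality, whence $f_G(X,\mu) \le f_G(Y,\nu) + f_G(X,\mu/\Sigma) \le f_G(Y,\nu) + \sH(\alpha)$, which is the claim.

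I expect this last step to be the main obstacle. Replacing conditioning on a finite $\Sigma$-measurable partition by conditioning on all of $\Sigma$ \emph{increases} the first term $\sH(\xi/\,\cdot\,)$ of $F_G$ but \emph{decreases} each of the terms $\sH(s\cdot\xi\vee\xi/\,\cdot\,)$, so there is no termwise comparison; one must bound the \emph{total} discrepancy uniformly in the radius $n$ despite the fact that the partitions $\B{n}{S}\cdot\alpha$ are themselves growing, and then interchange the two limits. This uniform control is exactly the free-group Abramov--Rohlin formula established in \cite{B10c}.
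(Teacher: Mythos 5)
The paper never proves this statement: it is quoted wholesale from Bowen's paper \cite{B10c} (hence the attribution in the theorem header), so there is no in-paper argument to compare yours against. Judged on its own terms, your reduction is the natural one. With $\Sigma$ the $G$-invariant sub-$\sigma$-algebra of $\pi$-preimages, the relative addition formula $f_G(X, \mu / \Sigma) = f_G(X, \mu) - f_G(Y, \nu)$ together with the bound $f_G(X, \mu / \Sigma) \leq \sH(\alpha / \Sigma) \leq \sH(\alpha)$ gives exactly the claimed inequality, and your computation of the $n = 0$ bound $F_G(X, \mu / \Sigma, S, \alpha) \leq \sH(\alpha / \Sigma)$ is correct; in fact that bound also follows from Theorem \ref{THM DISS} of this paper (whose proof does not use Theorem \ref{BOWEN FACTOR}, so there is no circularity), which would spare you the need to re-establish monotonicity of the relative $F_G$-terms. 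Moreover, the addition formula itself is quoted by the paper at the end of Section \ref{SEC PRE} from \cite{B10c, B10d}; if you allow yourself those quoted facts, your first two paragraphs already constitute a complete and correct derivation.

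The gap is that you do not allow yourself that, and then do not fill the hole. You set out to prove the inequality $f_G(X, \mu) \leq f_G(Y, \nu) + f_G(X, \mu / \Sigma)$, correctly identify that the discrepancy between conditioning on $\B{n}{S} \cdot \beta$ and conditioning on all of $\Sigma$ has terms of opposite sign (so there is no termwise comparison), and then resolve the resulting limit interchange by appealing to ``the free-group Abramov--Rohlin formula established in \cite{B10c}''. But Theorem \ref{BOWEN FACTOR} is itself a theorem of \cite{B10c}; deferring the only nontrivial step to that same source means the proposal reduces one quoted theorem to another without proving either. The martingale convergence $\sH(\B{n}{S} \cdot \alpha / \B{m}{S} \cdot \beta) \downarrow \sH(\B{n}{S} \cdot \alpha / \Sigma)$ is valid for each fixed $n$, but the uniformity in $n$ you need --- the error must be controlled while $|\B{n}{S}|$ grows exponentially and the partitions $\B{n}{S} \cdot \alpha$ themselves grow --- is precisely the analytic content of Bowen's relative theory, and no argument for it is sketched. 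Either cite Bowen's relative results outright (as the paper does), in which case your proof is done, or supply that uniform-control argument; as written the proposal stops exactly at the step that carries all the difficulty.
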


We also observe a simple lemma for later reference.

\begin{lem} \label{LEM FINACT}
Let $G$ be a finitely generated free group of rank $r$ acting on a probability space $(X, \mu)$. If $\sH(\mu) < \infty$ then $f_G(X, \mu) = - (r - 1) \sH(\mu).$
\end{lem}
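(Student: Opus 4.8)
The plan is to evaluate $f_G(X,\mu)$ directly from the definition, using the most convenient generating partition available. Since $\sH(\mu) < \infty$, the measure $\mu$ is purely atomic, as recorded earlier in this section; thus $X$ agrees, modulo a $\mu$-null set, with a countable set of atoms $x_1, x_2, \dots$ satisfying $\sum_i \mu(\{x_i\}) = 1$. I would take $\alpha$ to be the partition of $X$ whose cells are the singletons $\{x_i\}$ together with the $\mu$-null remainder $X \setminus \{x_1, x_2, \dots\}$. Every measurable subset of $X$ coincides $\mu$-almost everywhere with a countable union of atoms and hence already lies in the $\sigma$-algebra generated by $\alpha$ alone, so $\alpha$ is generating for $G \acts (X,\mu)$; and since $\alpha$ refines every finite measurable partition up to $\mu$-null sets, $\sH(\alpha) = \sH(\mu) < \infty$. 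In particular $f_G(X,\mu)$ is defined and may be computed with $\alpha$.

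The crucial point is that $g \cdot \alpha = \alpha$ modulo $\mu$-null sets for every $g \in G$: a measure-preserving bijection maps the set of atoms of any given weight onto itself up to a null set, so it merely permutes the cells of $\alpha$. Consequently $\B{n}{S} \cdot \alpha = \alpha$ for every $n$, the sequence $F_G(X,\mu,S,\B{n}{S}\cdot\alpha)$ defining the f-invariant entropy is constant, and, using $s \cdot \alpha \vee \alpha = \alpha$ for each $s \in S$ together with $|S| = r$,
\begin{align*}
f_G(X,\mu) = F_G(X,\mu,S,\alpha) &= (1-2r)\cdot\sH(\alpha) + \sum_{s\in S}\sH(s\cdot\alpha\vee\alpha) \\
&= (1-2r)\cdot\sH(\alpha) + r\cdot\sH(\alpha) = -(r-1)\cdot\sH(\mu).
\end{align*}

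I do not anticipate any real obstacle here. The only steps that demand attention are the two auxiliary facts about $\alpha$ — that it generates the measure algebra and that its Shannon entropy equals $\sH(\mu)$ — and both are immediate consequences of the pure atomicity of $\mu$. It is also worth noting, as a consistency check, the extreme cases: for $r = 1$ (that is, $G = \Z$) the formula gives $f_G(X,\mu) = 0$, matching the fact that a measure-preserving bijection of a countable set has zero Kolmogorov--Sinai entropy, and for $r = 0$ it gives $f_G(X,\mu) = \sH(\mu)$, as it should.
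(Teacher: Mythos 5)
Your proposal is correct and follows essentially the same route as the paper: both use pure atomicity of $\mu$ to take the partition into atoms as a finite-entropy generating partition, observe that every translate $g \cdot \alpha$ coincides with $\alpha$ modulo null sets, and then evaluate $F_G(X,\mu,S,\B{n}{S}\cdot\alpha) = (1-2r)\sH(\alpha) + r\,\sH(\alpha) = -(r-1)\sH(\mu)$ for every $n$. No gaps to report.
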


\begin{proof}
As $\sH(\mu) < \infty$, $\mu$ must be purely atomic. We can therefore partition $X$ so that every atom of $\mu$ is a class of this partition. Call this partition $\alpha$. One readily has $\sH(\alpha) = \sH(\mu) < \infty$ and $\alpha$ is generating. Also, for finite $K \subseteq G$ the partitions $K \cdot \alpha$ and $\alpha$ are, modulo sets of measure zero, identical. Therefore $F_G(X, \mu, S, K \cdot \alpha) = - (r - 1) \sH(\alpha) = - (r - 1) \sH(\mu)$ for every finite $K \subseteq G$.
\end{proof}

We point out that since $- (r - 1) \cdot \sH(\mu) \leq 0$, Theorem \ref{INTRO STAB} and the lemma above easily imply Corollary \ref{INTRO POSITIVE}.

In Sections \ref{SEC DISS} and \ref{SEC KOLM} we will work with relative f-invariant entropy. Specifically, let $G$ act on $(X, \mu)$ and let $\Sigma$ be a $G$-invariant sub-$\sigma$-algebra. If there is a generating partition $\alpha$ having finite Shannon entropy then the \emph{f-invariant entropy of $G \acts (X, \mu)$ relative to $\Sigma$} is
$$f_G(X, \mu / \Sigma) = \lim_{n \rightarrow \infty} F_G(X, \mu / \Sigma, S, \B{n}{S} \cdot \alpha)$$
where
$$F_G(X, \mu / \Sigma, S, \beta) = (1 - 2r) \sH(\beta / \Sigma) + \sum_{s \in S} \sH(s \cdot \beta \vee \beta / \Sigma).$$
In \cite{B10c, B10d} Bowen proved that $f_G(X, \mu / \Sigma)$ neither depends on the choice of $S$ nor $\alpha$. Furthermore, he showed that if the factor $G \acts (Y, \nu)$ induced by $\Sigma$ has defined f-invariant entropy then
$$f_G(X, \mu / \Sigma) = f_G(X, \mu) - f_G(Y, \nu).$$
The relative f-invariant entropy is a generalization of the standard f-invariant entropy, since if $\Sigma = \{\varnothing, X\}$ is the trivial $\sigma$-algebra then $f_G(X, \mu / \Sigma) = f_G(X, \mu)$.

\section{A New Formula} \label{SEC DISS}

In this section and the next we work with relative f-invariant entropy. This does not make the proofs more complicated in any manner whatsoever. Our invariant sub-$\sigma$-algebra will always be denoted $\Sigma$, and if readers wish they can easily either ignore $\Sigma$ (it mostly sits in the background) or take $\Sigma$ as the trivial sub-$\sigma$-algebra $\{X, \varnothing\}$.

In this section we prove that (relative) f-invariant entropy can be computed from what we call \emph{independence decay}. Obtaining this new formula for f-invariant entropy is a key ingredient to many of our proofs.

We begin with a simple lemma which will allow us to simplify the formula for f-invariant entropy.

\begin{lem} \label{LEM SPINEQ}
Let $G$ have rank $r$ and let $G$ act on a probability space $(X, \mu)$. If $\alpha$ is a countable measurable partition of $X$ and $\Sigma$ is a $G$-invariant sub-$\sigma$-algebra, then
$$\sH(\B{n+1}{S} \cdot \alpha / \B{n}{S} \cdot \alpha \vee \Sigma) \leq \sum_{s \in S \cup S^{-1}} \sH(s \B{n}{S} \cdot \alpha / \B{n}{S} \cdot \alpha \vee \Sigma),$$
and
$$\sum_{s \in S \cup S^{-1}} \sH(s \B{n+1}{S} \cdot \alpha / \B{n+1}{S} \cdot \alpha \vee \Sigma) \leq (2r - 1) \cdot \sH(\B{n+1}{S} \cdot \alpha / \B{n}{S} \cdot \alpha \vee \Sigma).$$
\end{lem}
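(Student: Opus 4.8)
The plan is to prove both inequalities by the same mechanism: write the sphere $\Sp{n+1}{S}$ as a controlled union of translates of balls, and then apply subadditivity and monotonicity of conditional Shannon entropy (Lemma \ref{LEM SHAN}, parts (ii), (iii), (iv), (v)) carefully. The structural fact to exploit is that in the $S$-Cayley graph of a free group, $\B{n+1}{S} = \B{n}{S} \cup \bigcup_{s \in S \cup S^{-1}} s \B{n}{S}$, and more precisely every reduced word $g$ with $|g| = n+1$ starting with the letter $s$ lies in $s \B{n}{S}$, so that $\Sp{n+1}{S} \subseteq \bigcup_{s} s \Sp{n}{S}$, with the translates $s \Sp{n}{S}$ being ``almost disjoint'' — two of them $s \Sp{n}{S}$ and $t \Sp{n}{S}$ ($s \neq t$) meet only in elements whose reduced word both starts with $s$ and starts with $t$, which is impossible, so they are genuinely disjoint away from lower spheres; the only overlap to worry about is $s \B{n}{S} \cap \B{n}{S}$, which equals $s \B{n-1}{S} \cup (\text{part of } \B{n}{S})$ depending on the letter, but in any case is contained in $\B{n}{S}$.

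For the first inequality: starting from $\B{n+1}{S} \cdot \alpha \vee \B{n}{S} \cdot \alpha = \left(\bigvee_{s} s \B{n}{S} \cdot \alpha\right) \vee \B{n}{S} \cdot \alpha$, I would use Lemma \ref{LEM SHAN}(v) to cancel the common factor $\B{n}{S} \cdot \alpha$ and then Lemma \ref{LEM SHAN}(iii) (the chain rule) to bound
$$\sH\Bigl(\bigvee_{s} s\B{n}{S} \cdot \alpha \,\Big/\, \B{n}{S} \cdot \alpha \vee \Sigma\Bigr) \le \sum_{s \in S \cup S^{-1}} \sH\bigl(s\B{n}{S} \cdot \alpha \,\big/\, \B{n}{S} \cdot \alpha \vee \Sigma\bigr),$$
since each additional join only adds its conditional entropy, and further conditioning on the already-added pieces only decreases entropy by Lemma \ref{LEM SHAN}(iv). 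That is exactly the claimed bound, so the first inequality is essentially just the chain rule plus monotonicity.

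For the second inequality the factor $2r-1$ has to come out, so the argument is a touch more delicate: for each fixed $s \in S \cup S^{-1}$, I want to bound $\sH(s\B{n+1}{S} \cdot \alpha / \B{n+1}{S} \cdot \alpha \vee \Sigma)$ by noting that $s\B{n+1}{S} \setminus \B{n+1}{S}$ consists precisely of those elements whose reduced word has length $n+1$ or $n+2$ and begins with $s$ — equivalently $s \Sp{n+1}{S}$ together with the piece of $s \B{n+1}{S}$ at distance $n+2$ from $1_G$, which is $s \cdot \{g : |g| = n+1, g \text{ does not begin with } s^{-1}\}$. Summing over the $2r$ choices of $s$, each element of $\Sp{n+2}{S}$ gets counted once (by its first letter), and each element of $\Sp{n+1}{S}$ gets counted $2r-1$ times (all first letters except the inverse of its own first letter). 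Thus $\sum_s s\Sp{n+1}{S} \cdot \alpha$, taken relative to $\B{n+1}{S}\cdot\alpha$, decomposes so that the total conditional entropy is at most $(2r-1)\,\sH(\Sp{n+1}{S}\cdot\alpha / \B{n}{S}\cdot\alpha \vee \Sigma) = (2r-1)\,\sH(\B{n+1}{S}\cdot\alpha / \B{n}{S}\cdot\alpha \vee \Sigma)$, using the chain rule to split off the $\Sp{n+2}{S}$ contributions and absorbing them (they are conditioned away), and using monotonicity to replace $\B{n+1}{S}\cdot\alpha$ by $\B{n}{S}\cdot\alpha$ in the conditioning where needed.

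The main obstacle I anticipate is purely bookkeeping: making the overlap analysis of the translated balls/spheres precise enough that the coefficients $1$ (first inequality) and $2r-1$ (second inequality) are honestly justified rather than asserted, i.e. tracking exactly which group elements appear in $s\B{n}{S} \setminus \B{n}{S}$ and with what multiplicity across the sum over $s$, and being careful that all the partitions being joined or conditioned on are either $\Sigma$, $\B{n}{S}\cdot\alpha$, or $\B{n+1}{S}\cdot\alpha$ so that Lemma \ref{LEM SHAN}(iv)–(v) apply cleanly. There is no analytic difficulty; the only real content is the combinatorics of balls in a tree together with the subadditivity/monotonicity toolkit.
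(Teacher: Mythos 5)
Your sketch of the first inequality is fine and is exactly the paper's argument: enumerate $S \cup S^{-1}$, apply the chain rule to the join $\bigvee_s s\B{n}{S} \cdot \alpha$ over $\B{n}{S} \cdot \alpha \vee \Sigma$, and drop the extra conditioning by monotonicity. No issue there.

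The second inequality, however, has a genuine gap, and it is not just bookkeeping. First, your structural description is off: $s\B{n+1}{S} \setminus \B{n+1}{S}$ contains no elements of length $n+1$; it equals $s C_s$ where $C_s \subseteq \Sp{n+1}{S}$ is the set of length-$(n+1)$ words not beginning with $s^{-1}$, so it consists entirely of length-$(n+2)$ words beginning with $s$ (in particular it is not ``$s\Sp{n+1}{S}$ together with'' anything). More seriously, your counting --- ``each element of $\Sp{n+2}{S}$ gets counted once'' --- if pursued via chain rule and monotonicity alone, only yields a bound of the shape $\sum_s \sH(s\B{n+1}{S} \cdot \alpha / \B{n+1}{S} \cdot \alpha \vee \Sigma) \leq \sH(\B{n+2}{S} \cdot \alpha / \B{n+1}{S} \cdot \alpha \vee \Sigma)$, which is a statement about radius-$(n+2)$ partitions and is not the claimed right-hand side; comparing it to $(2r-1)\sH(\B{n+1}{S} \cdot \alpha / \B{n}{S} \cdot \alpha \vee \Sigma)$ is essentially the content of the lemma itself. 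The step you are missing is the use of measure preservation of the action and $G$-invariance of $\Sigma$: after ordering $C_s$ and arranging the conditioning so that each summand has the form $\sH(s g \cdot \alpha / s\B{n}{S} \cdot \alpha \vee sP(g) \cdot \alpha \vee \Sigma)$, one translates by $s^{-1}$ to identify it with $\sH(g \cdot \alpha / \B{n}{S} \cdot \alpha \vee P(g) \cdot \alpha \vee \Sigma)$, a quantity depending on $g \in \Sp{n+1}{S}$ but no longer on $s$. Only then does the fact that each $g \in \Sp{n+1}{S}$ lies in exactly $2r-1$ of the sets $C_s$ produce the factor $2r-1$, and the chain rule reassembles the sum over $g$ into $\sH(\B{n+1}{S} \cdot \alpha / \B{n}{S} \cdot \alpha \vee \Sigma)$. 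Your proposal never invokes invariance of $\mu$ or $\Sigma$ at all, yet without it the inequality cannot follow from subadditivity and monotonicity alone (those tools cannot relate partitions $h \cdot \alpha$ with $|h| = n+2$ to partitions indexed by $\B{n+1}{S}$); also, your phrase ``split off the $\Sp{n+2}{S}$ contributions and absorb them (they are conditioned away)'' cannot work as stated, since those contributions constitute the entire left-hand side.
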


\begin{proof}
We begin with the first inequality. Enumerate $S \cup S^{-1}$ as $t_1, t_2, \ldots, t_{2r}$. Set
$$A_i = \bigcup_{j < i} t_j \B{n}{S}.$$
Since $\B{n+1}{S}$ is the union of the $t_i \B{n}{S}$'s, we have by Lemma \ref{LEM SHAN}
$$\sH(\B{n+1}{S} \cdot \alpha / \B{n}{S} \cdot \alpha \vee \Sigma) = \sum_{i = 1}^{2r} \sH(t_i \B{n}{S} \cdot \alpha / \B{n}{S} \cdot \alpha \vee A_i \cdot \alpha \vee \Sigma)$$
$$\leq \sum_{i = 1}^{2r} \sH(t_i \B{n}{S} \cdot \alpha / \B{n}{S} \cdot \alpha \vee \Sigma) = \sum_{s \in S \cup S^{-1}} \sH(s \B{n}{S} \cdot \alpha / \B{n}{S} \cdot \alpha \vee \Sigma).$$
Thus we have the first inequality.

Now we consider the second inequality. Fix $s \in S \cup S^{-1}$. Set
$$C_s = \left( \bigcup_{s^{-1} \neq t \in S \cup S^{-1}} t \B{n}{S} \right) \setminus \B{n}{S} \subseteq \Sp{n+1}{S}.$$
So $C_s$ is the set of $g \in \Sp{n+1}{S}$ whose reduced $S$-word representations do not begin on the left with $s^{-1}$. Notice that every $g \in \Sp{n+1}{S}$ lies in precisely $(2r - 1)$ many $C_s$'s. Also notice that
$$s \B{n+1}{S} \setminus \B{n+1}{S} = s C_s.$$
Fix a total ordering, $\leq$, of $\Sp{n+1}{S}$. For $g \in \Sp{n+1}{S}$, let $P(g)$ be the set of elements of $\Sp{n+1}{S}$ which strictly precede $g$. For $g \in C_s$ we may not have $P(g) \subseteq C_s$, however we do have
$$s P(g) \subseteq \B{n}{S} \cup s \cdot (P(g) \cap C_s).$$
So by Lemma \ref{LEM SHAN}
$$\sH(s \B{n+1}{S} \cdot \alpha / \B{n+1}{S} \cdot \alpha \vee \Sigma) = \sH(s C_s \cdot \alpha / \B{n+1}{S} \cdot \alpha \vee \Sigma)$$
$$= \sum_{g \in C_s} \sH(s g \cdot \alpha / \B{n+1}{S} \cdot \alpha \vee s (P(g) \cap C_s) \cdot \alpha \vee \Sigma) = \sum_{g \in C_s} \sH(s g \cdot \alpha / \B{n+1}{S} \cdot \alpha \vee s P(g) \cdot \alpha \vee \Sigma)$$
$$\leq \sum_{g \in C_s} \sH(s g \cdot \alpha / s \B{n}{S} \cdot \alpha \vee s P(g) \cdot \alpha \vee \Sigma) = \sum_{g \in C_s} \sH(g \cdot \alpha / \B{n}{S} \cdot \alpha \vee P(g) \cdot \alpha \vee \Sigma).$$
Therefore
$$\sum_{s \in S \cup S^{-1}} \sH(s \B{n+1}{S} \cdot \alpha / \B{n+1}{S} \cdot \alpha \vee \Sigma) \leq \sum_{s \in S \cup S^{-1}} \sum_{g \in C_s} \sH(g \cdot \alpha / \B{n}{S} \cdot \alpha \vee P(g) \cdot \alpha \vee \Sigma)$$
$$= (2r - 1) \sum_{g \in \Sp{n+1}{S}} \sH(g \cdot \alpha / \B{n}{S} \cdot \alpha \vee P(g) \cdot \alpha \vee \Sigma) = (2r - 1) \sH(\B{n+1}{S} \cdot \alpha / \B{n}{S} \cdot \alpha \vee \Sigma).$$
This completes the proof.
\end{proof}

We now obtain a somewhat simpler formula for (relative) f-invariant entropy.

\begin{lem} \label{FINV SPHERE}
Let $G$ have rank $r$ and let $G$ act on a probability space $(X, \mu)$. Assume that there is a generating partition $\alpha$ having finite Shannon entropy. Then for any $G$-invariant sub-$\sigma$-algebra $\Sigma$ we have
$$f_G(X, \mu / \Sigma) = \lim_{n \rightarrow \infty} (1 - r) \cdot \sH(\B{n}{S} \cdot \alpha / \Sigma) + \frac{1}{2} \cdot \sH(\B{n+1}{S} \cdot \alpha / \B{n}{S} \cdot \alpha \vee \Sigma).$$
\end{lem}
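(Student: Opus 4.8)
The plan is to start from the original definition of $f_G(X,\mu/\Sigma)$ as the limit of the quantities
$$F_G(X,\mu/\Sigma, S, \B{n}{S}\cdot\alpha) = (1-2r)\sH(\B{n}{S}\cdot\alpha/\Sigma) + \sum_{s\in S}\sH(s\B{n}{S}\cdot\alpha \vee \B{n}{S}\cdot\alpha/\Sigma),$$
and rewrite the right-hand side in a way that lets the two inequalities of Lemma \ref{LEM SPINEQ} control the error. First I would use Lemma \ref{LEM SHAN}(iii) and (v) to turn each summand $\sH(s\B{n}{S}\cdot\alpha\vee\B{n}{S}\cdot\alpha/\Sigma)$ into $\sH(\B{n}{S}\cdot\alpha/\Sigma) + \sH(s\B{n}{S}\cdot\alpha/\B{n}{S}\cdot\alpha\vee\Sigma)$; doing this for each of the $r$ generators $s\in S$ collapses the coefficient to $(1-2r) + r = (1-r)$ on the $\sH(\B{n}{S}\cdot\alpha/\Sigma)$ term, giving
$$F_G(X,\mu/\Sigma,S,\B{n}{S}\cdot\alpha) = (1-r)\sH(\B{n}{S}\cdot\alpha/\Sigma) + \sum_{s\in S}\sH(s\B{n}{S}\cdot\alpha/\B{n}{S}\cdot\alpha\vee\Sigma).$$
Note also that $\sH(s\B{n}{S}\cdot\alpha/\B{n}{S}\cdot\alpha\vee\Sigma) = \sH(s^{-1}\B{n}{S}\cdot\alpha/\B{n}{S}\cdot\alpha\vee\Sigma)$ by applying $s^{-1}$ (which fixes $\Sigma$ and permutes $\B{n}{S}$... actually $s^{-1}\B{n}{S}\neq\B{n}{S}$, so one should instead observe that applying the measure-preserving map $s^{-1}$ sends $s\B{n}{S}\cdot\alpha$ to $\B{n}{S}\cdot\alpha$ and $\B{n}{S}\cdot\alpha$ to $s^{-1}\B{n}{S}\cdot\alpha$), so $\sum_{s\in S}\sH(s\B{n}{S}\cdot\alpha/\B{n}{S}\cdot\alpha\vee\Sigma) = \tfrac12\sum_{s\in S\cup S^{-1}}\sH(s\B{n}{S}\cdot\alpha/\B{n}{S}\cdot\alpha\vee\Sigma)$.

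With $F_G(X,\mu/\Sigma,S,\B{n}{S}\cdot\alpha) = (1-r)\sH(\B{n}{S}\cdot\alpha/\Sigma) + \tfrac12\sum_{s\in S\cup S^{-1}}\sH(s\B{n}{S}\cdot\alpha/\B{n}{S}\cdot\alpha\vee\Sigma)$ in hand, the claimed formula amounts to showing that
$$\sum_{s\in S\cup S^{-1}}\sH(s\B{n}{S}\cdot\alpha/\B{n}{S}\cdot\alpha\vee\Sigma) \quad\text{and}\quad \sH(\B{n+1}{S}\cdot\alpha/\B{n}{S}\cdot\alpha\vee\Sigma)$$
have the same limit as $n\to\infty$ (after the factor $\tfrac12$), while also keeping track of the fact that the $n$-th term of the first expression and the $(n+1)$-st balls appear. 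Here I would apply Lemma \ref{LEM SPINEQ}: its first inequality (at level $n$) gives $\sH(\B{n+1}{S}\cdot\alpha/\B{n}{S}\cdot\alpha\vee\Sigma) \leq \sum_{s\in S\cup S^{-1}}\sH(s\B{n}{S}\cdot\alpha/\B{n}{S}\cdot\alpha\vee\Sigma)$, and its second inequality gives the reverse-type bound $\sum_{s\in S\cup S^{-1}}\sH(s\B{n+1}{S}\cdot\alpha/\B{n+1}{S}\cdot\alpha\vee\Sigma) \leq (2r-1)\sH(\B{n+1}{S}\cdot\alpha/\B{n}{S}\cdot\alpha\vee\Sigma)$. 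Since $\alpha$ is a finite-Shannon-entropy generating partition and the $F_G$-terms are known (Bowen) to converge, each of the quantities $\sH(s\B{n}{S}\cdot\alpha/\B{n}{S}\cdot\alpha\vee\Sigma)$ and $\sH(\B{n+1}{S}\cdot\alpha/\B{n}{S}\cdot\alpha\vee\Sigma)$ is monotone nonincreasing in $n$ (a standard consequence of Lemma \ref{LEM SHAN}(iv) together with the shift-invariance of the configuration), hence converges; call the limits $L_s$ and $L$. Feeding the convergence into the two inequalities of Lemma \ref{LEM SPINEQ} forces $L \leq \sum_s L_s \leq (2r-1)\cdot\tfrac{?}{?}$... more carefully, the first inequality gives $L \leq \sum_{s}L_s$ and the second gives $\sum_s L_s \leq (2r-1)L$; combined with the already-established identity relating $F_G$ to $(1-r)\sH(\B{n}{S}\cdot\alpha/\Sigma) + \tfrac12\sum_s\sH(s\B{n}{S}\cdot\alpha/\cdots)$, one extracts that the limit in the lemma equals $\lim_n F_G(X,\mu/\Sigma,S,\B{n}{S}\cdot\alpha) = f_G(X,\mu/\Sigma)$, because the difference between $\tfrac12\sum_{s\in S\cup S^{-1}}\sH(s\B{n}{S}\cdot\alpha/\B{n}{S}\cdot\alpha\vee\Sigma)$ and $\tfrac12\sH(\B{n+1}{S}\cdot\alpha/\B{n}{S}\cdot\alpha\vee\Sigma)$ is squeezed to $0$ in the limit by the sandwich $0 \le \tfrac12\big(\sum_s\sH(\cdots) - \sH(\B{n+1}\cdots)\big)$ combined with the telescoping/monotonicity that makes consecutive terms close.

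The main obstacle I anticipate is the bookkeeping in this last squeezing step: the first inequality of Lemma \ref{LEM SPINEQ} compares the $n$-th sphere-sum with the $n$-th ball-difference $\sH(\B{n+1}{S}\cdot\alpha/\B{n}{S}\cdot\alpha\vee\Sigma)$, whereas the second inequality compares the $(n{+}1)$-st sphere-sum with the same $n$-th ball-difference — so the two inequalities are offset by one index, and I need the monotone convergence of the sequence $n\mapsto \sum_{s\in S\cup S^{-1}}\sH(s\B{n}{S}\cdot\alpha/\B{n}{S}\cdot\alpha\vee\Sigma)$ (equivalently, that consecutive terms have the same limit) to close the loop. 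Establishing that monotonicity cleanly — that $\sH(s\B{n+1}{S}\cdot\alpha/\B{n+1}{S}\cdot\alpha\vee\Sigma) \le \sH(s\B{n}{S}\cdot\alpha/\B{n}{S}\cdot\alpha\vee\Sigma)$, via Lemma \ref{LEM SHAN}(iv) after expressing $s\B{n+1}{S}\cdot\alpha$ in terms of $s\B{n}{S}\cdot\alpha$ plus the outer sphere and using that $s\B{n}{S}\subseteq\B{n+1}{S}$ refines the conditioning — is the delicate point, but it is routine once set up, and everything else is algebra with Lemma \ref{LEM SHAN}.
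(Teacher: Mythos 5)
Your opening reduction is fine and matches the paper: using measure preservation, $G$-invariance of $\Sigma$, and Lemma \ref{LEM SHAN} one indeed gets $F_G(X,\mu/\Sigma,S,\B{n}{S}\cdot\alpha) = (1-r)\sH(\B{n}{S}\cdot\alpha/\Sigma) + \tfrac12\sum_{s\in S\cup S^{-1}}\sH(s\B{n}{S}\cdot\alpha/\B{n}{S}\cdot\alpha\vee\Sigma)$. The gap is in the second half. You assert that $\sH(s\B{n}{S}\cdot\alpha/\B{n}{S}\cdot\alpha\vee\Sigma)$ and $\sH(\B{n+1}{S}\cdot\alpha/\B{n}{S}\cdot\alpha\vee\Sigma)$ are nonincreasing in $n$ and hence converge to finite limits $L_s$ and $L$. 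This is false: for a Bernoulli shift with base entropy $H$ (and $\Sigma$ trivial) one has $\sH(\B{n+1}{S}\cdot\alpha/\B{n}{S}\cdot\alpha) = |\Sp{n+1}{S}|\cdot H = 2r(2r-1)^nH$, which diverges; in fact Corollary \ref{COR GROWTH} of this very paper shows that divergence with exponential rate $2r-1$ is the typical situation precisely when $f_G\neq-\infty$ and the space is non-atomic. Moreover, even if such limits existed, the two inequalities of Lemma \ref{LEM SPINEQ} only yield $L\le\sum_s L_s\le(2r-1)L$, which does not force $\sum_s L_s = L$; e.g.\ $L=1$, $\sum_s L_s=2$, $2r-1=3$ is consistent with both, so the ``squeeze'' as you describe it cannot conclude that $F_G$ and $F'_G$ have the same limit.

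The missing idea is a telescoping identity rather than monotonicity of the individual terms. Writing $H_n = \sH(\B{n+1}{S}\cdot\alpha/\B{n}{S}\cdot\alpha\vee\Sigma)$ and using the chain rule $\sH(\B{n+1}{S}\cdot\alpha/\Sigma)-\sH(\B{n}{S}\cdot\alpha/\Sigma)=H_n$, one computes that
$$2F_G(n)-2F_G(n+1) = \Big(\textstyle\sum_{s}\sH(s\B{n}{S}\cdot\alpha/\B{n}{S}\cdot\alpha\vee\Sigma)-H_n\Big) + \Big((2r-1)H_n-\textstyle\sum_{s}\sH(s\B{n+1}{S}\cdot\alpha/\B{n+1}{S}\cdot\alpha\vee\Sigma)\Big),$$
where both bracketed terms are nonnegative by the two inequalities of Lemma \ref{LEM SPINEQ} (note they are deliberately offset by one index so that the $(2r-2)H_n$ from the chain rule splits as $-H_n+(2r-1)H_n$). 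If $f_G(X,\mu/\Sigma)\neq-\infty$, the nonincreasing sequence $F_G(n)$ converges to a finite limit, so its consecutive differences tend to $0$, forcing each bracketed term to tend to $0$; the first bracket is exactly $2\big(F_G(n)-F'_G(n)\big)$, which gives the lemma. The case $f_G(X,\mu/\Sigma)=-\infty$ is handled by the pointwise inequality $F'_G(n)\le F_G(n)$ alone (which you do have, from the first inequality of Lemma \ref{LEM SPINEQ}). So your proposal has the right ingredients but replaces the essential telescoping step with a monotonicity claim that fails and an inequality argument that is not tight enough.
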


\begin{proof}
Define
$$F'_G(X, \mu / \Sigma, S, \alpha, n) = (1 - r) \cdot \sH(\B{n}{S} \cdot \alpha / \Sigma) + \frac{1}{2} \cdot \sH(\B{n+1}{S} \cdot \alpha / \B{n}{S} \cdot \alpha \vee \Sigma),$$
$$f'_G(X, \mu / \Sigma) = \lim_{n \rightarrow \infty} F'_G(X, \mu / \Sigma, S, \alpha, n).$$
Since the action of $G$ preserves measure we have
$$F_G(X, \mu / \Sigma, S, \B{n}{S} \cdot \alpha) = (1 - 2 r) \sH(\B{n}{S} \cdot \alpha / \Sigma) + \sum_{s \in S} \sH(s \B{n}{S} \cdot \alpha \vee \B{n}{S} \cdot \alpha / \Sigma)$$
$$= (1 - 2 r) \sH(\B{n}{S} \cdot \alpha / \Sigma) + \frac{1}{2} \cdot \sum_{s \in S \cup S^{-1}} \sH(s \B{n}{S} \cdot \alpha \vee \B{n}{S} \cdot \alpha / \Sigma)$$
$$= (1 - r) \sH(\B{n}{S} \cdot \alpha / \Sigma) + \frac{1}{2} \cdot \sum_{s \in S \cup S^{-1}} \sH(s \B{n}{S} \cdot \alpha / \B{n}{S} \cdot \alpha \vee \Sigma),$$
where the last equality follows from Lemma \ref{LEM SHAN}. So by the first inequality of Lemma \ref{LEM SPINEQ} we have
$$F'_G(X, \mu / \Sigma, S, \alpha, n) \leq F_G(X, \mu / \Sigma, S, \B{n}{S} \cdot \alpha)$$
for every $n \in \N$. Thus $f'_G(X, \mu / \Sigma) \leq f_G(X, \mu / \Sigma)$.

If $f_G(X, \mu / \Sigma) = -\infty$, then we have $f'_G(X, \mu / \Sigma) = f_G(X, \mu / \Sigma)$ as claimed. So suppose that $f_G(X, \mu / \Sigma) \neq -\infty$. Then we have
$$0 = \lim_{n \rightarrow \infty} 2 \cdot F_G(X, \mu / \Sigma, S, \B{n}{S} \cdot \alpha) - 2 \cdot F_G(X, \mu / \Sigma, S, \B{n+1}{S} \cdot \alpha)$$
$$= \lim_{n \rightarrow \infty} 2(1 - r) \sH(\B{n}{S} \cdot \alpha / \Sigma) - 2(1 - r) \sH(\B{n+1}{S} \cdot \alpha / \Sigma)$$
$$+ \sum_{s \in S \cup S^{-1}} \Big(\sH(s \B{n}{S} \cdot \alpha / \B{n}{S} \cdot \alpha \vee \Sigma) - \sH(s \B{n+1}{S} \cdot \alpha / \B{n+1}{S} \cdot \alpha \vee \Sigma) \Big)$$
$$= \lim_{n \rightarrow \infty} (2r - 2) \cdot \sH(\B{n+1}{S} \cdot \alpha / \B{n}{S} \cdot \alpha \vee \Sigma)$$
$$+ \sum_{s \in S \cup S^{-1}} \Big( \sH(s \B{n}{S} \cdot \alpha / \B{n}{S} \cdot \alpha \vee \Sigma) - \sH(s \B{n+1}{S} \cdot \alpha / \B{n+1}{S} \cdot \alpha \vee \Sigma) \Big)$$
$$= \lim_{n \rightarrow \infty} \sum_{s \in S \cup S^{-1}} \sH(s \B{n}{S} \cdot \alpha / \B{n}{S} \cdot \alpha \vee \Sigma) - \sH(\B{n+1}{S} \cdot \alpha / \B{n}{S} \cdot \alpha \vee \Sigma)$$
$$+ (2r - 1) \cdot \sH(\B{n+1}{S} \cdot \alpha / \B{n}{S} \cdot \alpha \vee \Sigma) - \sum_{s \in S \cup S^{-1}} \sH(s \B{n+1}{S} \cdot \alpha / \B{n+1}{S} \cdot \alpha \vee \Sigma).$$
The expression appearing in the last line and the expression appearing in the second to last line are both non-negative by Lemma \ref{LEM SPINEQ}. Since the limit is $0$, we must have that $f_G(X, \mu / \Sigma) \neq -\infty$ implies
\begin{equation} \label{EQ SPLIM}
\lim_{n \rightarrow \infty} \sum_{s \in S \cup S^{-1}} \sH(s \B{n}{S} \cdot \alpha / \B{n}{S} \cdot \alpha \vee \Sigma) - \sH(\B{n+1}{S} \cdot \alpha / \B{n}{S} \cdot \alpha \vee \Sigma) = 0,
\end{equation}
and
$$\lim_{n \rightarrow \infty} (2r - 1) \cdot \sH(\B{n+1}{S} \cdot \alpha / \B{n}{S} \cdot \alpha \vee \Sigma) - \sum_{s \in S \cup S^{-1}} \sH(s \B{n+1}{S} \cdot \alpha / \B{n+1}{S} \cdot \alpha \vee \Sigma) = 0.$$
In particular, when $f_G(X, \mu / \Sigma) \neq -\infty$ we have
$$\lim_{n \rightarrow \infty} F_G(X, \mu / \Sigma, S, \B{n}{S} \cdot \alpha) - F'_G(X, \mu / \Sigma, S, \alpha, n) = 0$$
by Equation \ref{EQ SPLIM}. Thus $f'_G(X, \mu / \Sigma) = f_G(X, \mu / \Sigma)$ in all cases.
\end{proof}

\begin{cor} \label{COR GROWTH}
Let $G$ be of rank $r > 1$ and let $G$ act on a probability space $(X, \mu)$. Assume that there is a generating partition $\alpha$ having finite Shannon entropy. Let $\Sigma$ be a $G$-invariant sub-$\sigma$-algebra. Let $(Y, \nu)$ be the factor of $(X, \mu)$ obtained from $\Sigma$, and let $\{\mu_y \: y \in Y\}$ be the disintegration of $\mu$ over $\nu$. If $f_G(X, \mu / \Sigma) \neq - \infty$ and $\nu(\{y \in Y \: \mu_y \text{ is not purely atomic}\}) > 0$ then
$$\lim_{n \rightarrow \infty} \sqrt[n]{\sH(\B{n+1}{S} \cdot \alpha / \B{n}{S} \cdot \alpha \vee \Sigma)} = 2r - 1.$$
\end{cor}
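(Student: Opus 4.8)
The plan is to reduce the whole statement to the asymptotics of the single sequence $c_n := \sH(\B{n}{S}\cdot\alpha/\Sigma)$ and then solve an elementary linear recurrence. Write also $a_n := \sH(\B{n+1}{S}\cdot\alpha/\B{n}{S}\cdot\alpha\vee\Sigma)$, so that the quantity under the $n$-th root in the statement is precisely $a_n$. Since $\B{n}{S}\subseteq\B{n+1}{S}$, Lemma~\ref{LEM SHAN}(iii) gives $c_{n+1}=a_n+c_n$. Each $c_n$ is finite, since $\sH(\B{n}{S}\cdot\alpha)\le|\B{n}{S}|\cdot\sH(\alpha)<\infty$, and $f:=f_G(X,\mu/\Sigma)$ is a finite real number, being $\neq-\infty$ by hypothesis and always at most $\sH(\alpha)<\infty$. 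Lemma~\ref{FINV SPHERE} says $(1-r)c_n+\tfrac12 a_n\to f$, so $a_n=2(r-1)c_n+2f+\varepsilon_n$ with $\varepsilon_n\to0$; substituting $a_n=c_{n+1}-c_n$ gives the recurrence $c_{n+1}=(2r-1)c_n+2f+\varepsilon_n$.

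The step that actually uses the hypothesis on $\mu_y$ is the claim that $c_n\to\infty$. Because $\alpha$ is generating, $\bigvee_n\B{n}{S}\cdot\alpha$ is the Borel $\sigma$-algebra of $X$ modulo $\mu$-null sets, and hence modulo $\mu_y$-null sets for $\nu$-almost every $y$. For such $y$ the increasing finite partitions $\B{n}{S}\cdot\alpha$ satisfy $\sH_{\mu_y}(\B{n}{S}\cdot\alpha)\uparrow\sH(\mu_y)$ (continuity of Shannon entropy on partitions with a bounded number of atoms, together with the fact that these partitions generate modulo $\mu_y$-null sets). By monotone convergence $c_n=\int\sH_{\mu_y}(\B{n}{S}\cdot\alpha)\,d\nu\uparrow\int\sH(\mu_y)\,d\nu$, and since a probability measure of finite Shannon entropy is purely atomic, on the positive-measure set where $\mu_y$ is not purely atomic we have $\sH(\mu_y)=\infty$; hence the integral is $+\infty$ and $c_n\to\infty$.

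It remains to solve the recurrence. Put $K:=2r-1>1$ (this is where $r>1$ is used) and $d_n:=c_n/K^n$; the recurrence becomes $d_{n+1}=d_n+(2f+\varepsilon_n)/K^{n+1}$, and as $\sum_n(2f+\varepsilon_n)/K^{n+1}$ converges absolutely, $d_n\to d_\infty$ for some $d_\infty\in\R$. From $c_n\ge0$ we get $d_\infty\ge0$; and $d_\infty=0$ is impossible, because then $d_n=-\sum_{k\ge n}(2f+\varepsilon_k)/K^{k+1}$, so $c_n=K^nd_n=-\sum_{j\ge0}(2f+\varepsilon_{n+j})/K^{j+1}\to-2f/(K-1)$ by dominated convergence, contradicting $c_n\to\infty$. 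Therefore $d_\infty>0$ and $\sqrt[n]{c_n}=K\sqrt[n]{d_n}\to K$. Finally, $a_n=2(r-1)c_n+2f+\varepsilon_n$ together with $r>1$ and $c_n\to\infty$ gives $a_n/c_n\to2(r-1)>0$, whence $\sqrt[n]{a_n}$ and $\sqrt[n]{c_n}$ have the same limit, which is $2r-1$.

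I expect the only genuinely nonroutine point to be the justification that $\sH_{\mu_y}(\B{n}{S}\cdot\alpha)\uparrow\sH(\mu_y)$ for $\nu$-almost every $y$, i.e., that a fixed finite-Shannon-entropy generating partition for $(X,\mu)$ still ``sees'' the full (possibly infinite) entropy of each conditional measure $\mu_y$. This needs a short measure-theoretic argument: fix a countable family generating the Borel $\sigma$-algebra of $X$, discard the $\nu$-null set of $y$ for which $\bigvee_n\B{n}{S}\cdot\alpha$ fails to generate modulo $\mu_y$-null sets, and then combine density with continuity of Shannon entropy on partitions with a fixed number of atoms. Everything else is bookkeeping with Lemmas~\ref{LEM SHAN} and~\ref{FINV SPHERE} and the elementary analysis of a first-order linear recurrence; note in particular that the inequality $\limsup_n\sqrt[n]{a_n}\le2r-1$ is soft (it follows directly from $a_{n+1}\le(2r-1)a_n$, which is immediate from Lemma~\ref{LEM SPINEQ}), so the real content is the matching lower bound, where the hypothesis forcing $c_n\to\infty$ is essential.
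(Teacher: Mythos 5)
Your proof is correct, and its first half coincides with the paper's: the only place the hypothesis on the $\mu_y$'s enters is the argument that $c_n=\sH(\B{n}{S} \cdot \alpha / \Sigma)\to\infty$, obtained from almost-everywhere generation of $\bigvee_n \B{n}{S} \cdot \alpha$ for the conditional measures, $\sH_{\mu_y}(\B{n}{S} \cdot \alpha)\uparrow \sH(\mu_y)$, and monotone convergence --- exactly the paper's step (which, like you, treats the a.e.-generation claim as a routine adaptation of the argument in Lemma \ref{LEM GENERATOR}(i) to disintegrations; your appeal to ``continuity on partitions with a bounded number of atoms'' is slightly off since $\alpha$ may be countably infinite, but the standard coarsening/martingale argument gives the claim for countable partitions, so this is cosmetic). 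Where you genuinely diverge is the finish. The paper works with the increments $a_n=\sH(\B{n+1}{S} \cdot \alpha / \B{n}{S} \cdot \alpha \vee \Sigma)$ directly: it reuses the two nonnegative differences shown to tend to zero inside the proof of Lemma \ref{FINV SPHERE} (Equation \ref{EQ SPLIM} and its companion) to conclude $(2r-1)a_n-a_{n+1}\to 0$, and combines this with $a_n\to\infty$ to pin down the exponential growth rate. You instead use only the statement of Lemma \ref{FINV SPHERE} together with the chain rule $c_{n+1}=c_n+a_n$ to get the recurrence $c_{n+1}=(2r-1)c_n+2f+\varepsilon_n$, solve it to show $c_n\sim d_\infty (2r-1)^n$ with $d_\infty>0$ (ruling out $d_\infty=0$ because it would force $c_n$ bounded, against $c_n\to\infty$), and then transfer to $a_n$ via $a_n/c_n\to 2(r-1)$. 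This buys a cleaner dependence structure (no need to re-enter the proof of Lemma \ref{FINV SPHERE}) and in fact a slightly stronger conclusion (asymptotics of $c_n$ and $a_n$ up to a positive constant, not merely the $n$-th root limit), while the paper's route is shorter given that Equation \ref{EQ SPLIM} is already at hand; your soft bound $a_{n+1}\le (2r-1)a_n$ from Lemma \ref{LEM SPINEQ} is also correct but, as you say, only needed for the easy direction. Both arguments are sound.
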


Notice that if $\Sigma = \{X, \varnothing\}$ is the trivial $\sigma$-algebra then $Y = \{y\}$ is a singleton and $\mu_y = \mu$. So in this case one only needs to assume that $f_G(X, \mu) \neq - \infty$ and $\mu$ is not purely atomic.

\begin{proof}
Since $\alpha$ is generating, we have $\lim_{n \rightarrow \infty} \sH_{\mu_y}(\B{n}{S} \cdot \alpha) = \sH(\mu_y)$ for $\nu$-almost every $y \in Y$. So
$$\lim_{n \rightarrow \infty} \sH(\B{n}{S} \cdot \alpha / \Sigma) = \lim_{n \rightarrow \infty} \int \sH_{\mu_y}(\B{n}{S} \cdot \alpha) d \nu(y) = \int \sH(\mu_y) d \nu(y)$$
by the Monotone Convergence Theorem. If $\mu_y$ is not purely atomic then $\sH(\mu_y) = \infty$. So our assumptions on the $\mu_y$'s imply that $\sH(\B{n}{S} \cdot \alpha / \Sigma)$ tends to infinity. By Lemma \ref{FINV SPHERE} we have
$$0 = \lim_{n \rightarrow \infty} (1 - r) \cdot \sH(\B{n}{S} \cdot \alpha / \Sigma) + \frac{1}{2} \cdot \sH(\B{n+1}{S} \cdot \alpha / \B{n}{S} \cdot \alpha \vee \Sigma) - f_G(X, \mu / \Sigma).$$
Since $r > 1$ it follows that
$$\lim_{n \rightarrow \infty} \sH(\B{n+1}{S} \cdot \alpha / \B{n}{S} \cdot \alpha \vee \Sigma) = + \infty.$$
In the proof of the previous lemma, specifically Equation \ref{EQ SPLIM}, we showed that when $f_G(X, \mu / \Sigma) \neq - \infty$ we have
$$\lim_{n \rightarrow \infty} \sum_{s \in S \cup S^{-1}} \sH(s \B{n}{S} \cdot \alpha / \B{n}{S} \cdot \alpha \vee \Sigma) - \sH(\B{n+1}{S} \cdot \alpha / \B{n}{S} \cdot \alpha \vee \Sigma) = 0,$$
and
$$\lim_{n \rightarrow \infty} (2r - 1) \cdot \sH(\B{n+1}{S} \cdot \alpha / \B{n}{S} \cdot \alpha \vee \Sigma) - \sum_{s \in S \cup S^{-1}} \sH(s \B{n+1}{S} \cdot \alpha / \B{n+1}{S} \cdot \alpha \vee \Sigma) = 0.$$
From these two equations it follows that
$$\lim_{n \rightarrow \infty} (2r - 1) \cdot \sH(\B{n+1}{S} \cdot \alpha / \B{n}{S} \cdot \alpha \vee \Sigma) - \sH(\B{n+2}{S} \cdot \alpha / \B{n+1}{S} \cdot \alpha \vee \Sigma) = 0.$$
Since $\sH(\B{n+1}{S} \cdot \alpha / \B{n}{S} \cdot \alpha \vee \Sigma)$ tends to infinity, it quickly follows from the above equation that $\sH(\B{n+1}{S} \cdot \alpha / \B{n}{S} \cdot \alpha \vee \Sigma)$ must have exponential growth rate between $2r - 1 - \epsilon$ and $2r - 1 + \epsilon$ for every $\epsilon > 0$.
\end{proof}

The previous corollary allows a simple proof that actions which factor through a proper quotient of $G$ must have f-invariant entropy negative infinity, provided it is defined and the space is not purely atomic. This is an extremely weak version of our main theorem on stabilizers and is itself a new result. We include this corollary because it is quite interesting and its proof is substantially simpler than the proof of Theorem \ref{INTRO STAB}.

\begin{cor} \label{COR NORMAL}
Let $G$ have rank $r > 1$ and let $K \lhd G$ be a non-trivial normal subgroup. Let $G$ act on a probability space $(X, \mu)$. Assume that $f_G(X, \mu)$ is defined. If this action factors through $G / K$ and $\mu$ is not purely atomic then $f_G(X, \mu) = - \infty$.
\end{cor}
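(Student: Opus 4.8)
The key is to exploit Corollary \ref{COR GROWTH}, which says that when $f_G(X,\mu) \neq -\infty$ and $\mu$ is not purely atomic, the quantity $\sH(\B{n+1}{S} \cdot \alpha / \B{n}{S} \cdot \alpha)$ grows at exponential rate exactly $2r - 1$. The idea is that if the action factors through $G/K$ for a non-trivial normal subgroup $K$, then the effective "group" acting is $G/K$, which (since $K$ is non-trivial normal in a free group of rank $r > 1$) grows strictly slower than $G$; this should force the exponential growth rate of $\sH(\B{n+1}{S} \cdot \alpha / \B{n}{S} \cdot \alpha)$ to be strictly less than $2r - 1$, a contradiction. So the proof is by contradiction: assume $f_G(X,\mu) \neq -\infty$; by Lemma \ref{LEM FINACT} $\mu$ is not purely atomic rules out the trivial case, so Corollary \ref{COR GROWTH} applies (with $\Sigma$ trivial).

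First I would fix a non-trivial element $k \in K$, and use that the action factors through $G/K$ to deduce that $g \cdot \alpha = gk \cdot \alpha$ (mod null sets) for every $g \in G$; more generally $g \cdot \alpha$ depends only on the coset $gK$. Consequently, if two elements $g, h \in \B{n}{S}$ lie in the same $K$-coset, the partition $h \cdot \alpha$ adds nothing to $\B{n}{S} \cdot \alpha$ beyond what $g \cdot \alpha$ already contributes. The plan is to bound $\sH(\B{n+1}{S} \cdot \alpha / \B{n}{S} \cdot \alpha)$ above by $\sH(\alpha)$ times the number of $K$-cosets that meet $\Sp{n+1}{S}$ but whose representatives in $\B{n+1}{S}$ do not already meet $\B{n}{S}$ — in fact by the number of cosets meeting $\Sp{n+1}{S}$ at all. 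The crux is then purely combinatorial: the number of $K$-cosets intersecting the sphere $\Sp{n}{S}$ grows at exponential rate strictly less than $2r-1$. This follows because $|K| = \infty$ (a non-trivial normal subgroup of a non-abelian free group has infinite index complement... rather, $K$ itself is infinite and of infinite index; what matters is that $G/K$ as a finitely generated group has exponential growth rate strictly below that of $G$, unless $K$ is trivial — indeed a non-trivial normal subgroup of a free group of rank $\geq 2$ always has infinite index, and $G/K$ being a proper quotient of a free group has strictly smaller exponential growth, a standard fact, e.g. via the fact that $G/K$ satisfies a law or simply has fewer reduced words surviving).

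Concretely: since $K \neq \{1_G\}$, pick $1_G \neq k \in K$ with reduced length $\ell = |k|$. For any $g \in \Sp{n}{S}$ with $n \geq \ell$, if the reduced word for $g$ ends (on the right) with the reduced word for $k$, then $gk^{-1} \in \B{n-1}{S}$ lies in the same coset $gK$, so $g$'s coset already has a shorter representative. A counting argument shows the proportion of words of length $n$ avoiding this "reducible tail" pattern decays exponentially, so the number of cosets meeting $\Sp{n}{S}$ but not $\B{n-1}{S}$ via any representative is at most $C \lambda^n$ for some $\lambda < 2r - 1$. Summing, the number of cosets meeting $\B{n}{S}$ is $O(\lambda^n)$, hence $\sH(\B{n}{S} \cdot \alpha) \leq O(\lambda^n) \cdot \sH(\alpha)$, and likewise $\sH(\B{n+1}{S} \cdot \alpha / \B{n}{S} \cdot \alpha) \leq \sH(\B{n+1}{S} \cdot \alpha) \leq O(\lambda^n) \sH(\alpha)$, forcing its exponential growth rate to be at most $\lambda < 2r-1$. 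This contradicts Corollary \ref{COR GROWTH}, so $f_G(X,\mu) = -\infty$.

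**Main obstacle.** The analytic/entropy side is routine given Corollary \ref{COR GROWTH}; the real work is the combinatorial estimate that $G/K$ has exponential growth rate strictly below $2r-1$ with respect to the image generating set (equivalently, that the number of $K$-cosets meeting $\B{n}{S}$ is $o((2r-1)^n)$). One must be a little careful: it is not enough that individual cosets are infinite — one needs a uniform exponential deficit. The cleanest route is probably to observe that $k$ and all its cyclic conjugates lie in $K$, or more simply to directly count reduced words of length $n$ that contain no occurrence of the reduced word of $k$ (or of $k^{-1}$) as a subword, since any word containing such an occurrence is $K$-equivalent to a strictly shorter word (after a free reduction that shortens length), and by a standard transfer-matrix / Perron–Frobenius argument the count of such "forbidden-subword-avoiding" reduced words grows at rate strictly less than $2r-1$. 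Handling the bookkeeping so that every coset meeting $\B{n}{S}$ is charged to a genuinely shortest representative, which then avoids the forbidden pattern, is the delicate point.
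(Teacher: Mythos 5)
Your proposal is correct and takes essentially the same route as the paper: the paper also assumes $f_G(X,\mu)\neq-\infty$, bounds $\sH(\B{n}{S}\cdot\alpha/\B{n-1}{S}\cdot\alpha)\leq\sH(\B{n}{S}\cdot\alpha)\leq|\phi(\B{n}{S})|\cdot\sH(\alpha)$ for the quotient map $\phi\colon G\to G/K$, and contradicts Corollary \ref{COR GROWTH}. The only difference is that the paper simply cites Grigorchuk--de la Harpe \cite{GH97} for the fact that a proper quotient of a free group has exponential growth rate strictly below $2r-1$, whereas you sketch a direct forbidden-subword argument for it (and your normality-based shortening, replacing $g=ukv$ by $uv\in gK$, is exactly the right mechanism for that lemma).
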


\begin{proof}
For clarification, we say the action factors through $G / K$ to mean that $K$ stabilizes every $x \in X$. Towards a contradiction, suppose that $f_G(X, \mu) \neq -\infty$. Let $\alpha$ be a generating partition with $\sH(\alpha) < \infty$. Let $\phi : G \rightarrow G / K$ be the factor map. By \cite[Section B]{GH97}, the exponential growth rate of $G /K$ is strictly less than the exponential growth rate of $G$:
$$\lim_{n \rightarrow \infty} \sqrt[n]{|\phi(\B{n}{S})|} < 2r - 1.$$
The number of distinct partitions among $g \cdot \alpha$ with $g \in \B{n}{S}$ is at most $|\phi(\B{n}{S})|$. So
$$\sH(\B{n}{S} \cdot \alpha / \B{n-1}{S} \cdot \alpha) \leq \sH(\B{n}{S} \cdot \alpha) \leq |\phi(\B{n}{S})| \cdot \sH(\alpha).$$
After taking roots and applying the previous corollary we obtain
$$2r - 1 = \lim_{n \rightarrow \infty} \sqrt[n]{\sH(\B{n}{S} \cdot \alpha / \B{n-1}{S} \cdot \alpha)} \leq \lim_{n \rightarrow \infty} \sqrt[n]{|\phi(\B{n}{S})| \cdot \sH(\alpha)} < 2r - 1,$$
a contradiction.
\end{proof}

The above technique may work in the more general setting where
$$\limsup_{n \rightarrow \infty} \sqrt[n]{|\B{n}{S} \cdot x|}$$
is strictly less than $2r - 1$ for $\mu$-almost every $x \in X$. However, recent work of M. Ab\'{e}rt, Y. Glasner, and B. Vir\'{a}g \cite[Theorem 8]{AGV11}, \cite[Proposition 14]{AGV12} shows that for any finitely generated free group $G \neq \Z$ there exists a measure preserving action of $G$ on a standard probability space $(X, \mu)$ such that the stabilizer of $x$ is non-trivial and the above limit equals $2r - 1$ for $\mu$-almost every $x \in X$. Therefore we cannot rely on the above technique to prove Theorem \ref{INTRO STAB}. Instead, we will obtain tighter control over f-invariant entropy by refining the formula for f-invariant entropy found in Lemma \ref{FINV SPHERE}.

We remark that the result of Ab\'{e}rt--Glasner--Vir\'{a}g discussed above again shows that stabilizers occurring in probability measure preserving actions of free groups can be quite bizarre. Our main theorem therefore demonstrates a significant restriction imposed by having finite f-invariant entropy.

In the next theorem we show that (relative) f-invariant entropy can be defined in terms of independence decay. Recall from the paragraph preceding Theorem \ref{INTRO DISS} that we use a special well-ordering, $\preceq$, on $G$ and we let $\Pre(g)$ denote the set of group elements which strictly precede $g$. In the case of relative f-invariant entropy the independence decay is defined as follows. Let $G \acts (X, \mu)$, let $\Sigma$ be a $G$-invariant sub-$\sigma$-algebra, and suppose that $\alpha$ is a generating partition having finite Shannon entropy. If $g \in G$ is not the identity then we set
$$\delta(g, \alpha / \Sigma) = \sH(s^{-1} g \cdot \alpha / \Pre(s^{-1} g) \cdot \alpha \vee \Sigma) - \sH(g \cdot \alpha / \Pre(g) \cdot \alpha \vee \Sigma),$$
where $s \in S \cup S^{-1}$ is such that $|s^{-1} g| = |g| - 1$. We write $\delta(g, \alpha)$ when $\Sigma = \{X, \varnothing\}$ is the trivial $\sigma$-algebra. Technically $\delta$ depends both on $S$ and on the choice of an ordering of $S \cup S^{-1}$, but we do not emphasize this fact. Notice that if $1_G \neq g \in G$ begins on the left with $s \in S \cup S^{-1}$ in its reduced $S$-word representation then
$$s \cdot \Pre(s^{-1} g) \subseteq \Pre(g).$$
Therefore
$$\delta(g, \alpha / \Sigma) = \sH(s^{-1} g \cdot \alpha / \Pre(s^{-1} g) \cdot \alpha \vee \Sigma) - \sH(g \cdot \alpha / \Pre(g) \cdot \alpha \vee \Sigma)$$
$$= \sH(g \cdot \alpha / s \Pre(s^{-1} g) \cdot \alpha \vee \Sigma) - \sH(g \cdot \alpha / \Pre(g) \cdot \alpha \vee \Sigma) \geq 0,$$
as claimed in the introduction. Furthermore, from the equation above we see that $\delta(g, \alpha / \Sigma)$ measures how much $g \cdot \alpha$ depends on $(\Pre(g) \setminus s \Pre(s^{-1} g)) \cdot \alpha$ when conditioned on $s \Pre(s^{-1} g) \cdot \alpha \vee \Sigma$. So the independence decay at $g$ is $0$ when the partitions $g \cdot \alpha$ and $(\Pre(g) \setminus s \Pre(s^{-1} g)) \cdot \alpha$ are independent when conditioned on $s \Pre(s^{-1} g) \cdot \alpha \vee \Sigma$.

\begin{thm} \label{THM DISS}
Let $G$ act on a probability space $(X, \mu)$ and let $\Sigma$ be a $G$-invariant sub-$\sigma$-algebra. Assume that there is a generating partition $\alpha$ having finite Shannon entropy. Then
$$f_G(X, \mu / \Sigma) = \sH(\alpha / \Sigma) - \frac{1}{2} \cdot \sum_{1_G \neq g \in G} \delta(g, \alpha / \Sigma).$$
\end{thm}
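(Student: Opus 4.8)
The plan is to start from the sphere formula for relative f-invariant entropy established in Lemma~\ref{FINV SPHERE} and to telescope the quantities $\sH(\B{n}{S}\cdot\alpha/\Sigma)$ and $\sH(\B{n+1}{S}\cdot\alpha/\B{n}{S}\cdot\alpha\vee\Sigma)$ along the well-ordering $\preceq$, turning both into sums of the conditional entropies $\sH(g\cdot\alpha/\Pre(g)\cdot\alpha\vee\Sigma)$ over $g\in\B{n}{S}$. Concretely, since $\Pre(g)\cap\B{n}{S}$ exhausts all elements of $\B{n}{S}$ preceding $g$ (note that $\Pre$ respects word-length, so $\Pre(g)\subseteq\B{n}{S}$ whenever $|g|\le n$), repeated application of Lemma~\ref{LEM SHAN}(iii) gives
$$\sH(\B{n}{S}\cdot\alpha/\Sigma)=\sum_{g\in\B{n}{S}}\sH(g\cdot\alpha/\Pre(g)\cdot\alpha\vee\Sigma),$$
and similarly $\sH(\B{n+1}{S}\cdot\alpha/\B{n}{S}\cdot\alpha\vee\Sigma)=\sum_{g\in\Sp{n+1}{S}}\sH(g\cdot\alpha/\Pre(g)\cdot\alpha\vee\Sigma)$. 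Write $a_g:=\sH(g\cdot\alpha/\Pre(g)\cdot\alpha\vee\Sigma)$ for brevity; note $a_{1_G}=\sH(\alpha/\Sigma)$.

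The next step is bookkeeping on the tree. Each $g\neq 1_G$ with $|g|=n$ has a unique parent $s^{-1}g$ with $|s^{-1}g|=n-1$ (here $s\in S\cup S^{-1}$ is the first letter of $g$), and conversely each element of $\Sp{n-1}{S}$ (resp.\ $1_G$) has exactly $2r-1$ children (resp.\ $2r$ children) in $\Sp{n}{S}$. By definition $\delta(g,\alpha/\Sigma)=a_{s^{-1}g}-a_g$. I will compute $F'_G(X,\mu/\Sigma,S,\alpha,n)=(1-r)\sH(\B{n}{S}\cdot\alpha/\Sigma)+\tfrac12\sH(\B{n+1}{S}\cdot\alpha/\B{n}{S}\cdot\alpha\vee\Sigma)$ by substituting the telescoped expressions and reorganizing the sum over $\B{n}{S}$ by grouping each $a_g$ with the $\delta$'s of its children. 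The idea is that $(1-r)$ times a sum over the ball, plus $\tfrac12$ times a sum over the next sphere, rearranges: for a vertex $g$ at depth $<n$ with its $2r-1$ children $h$ (or $2r$ if $g=1_G$), one has $\sum_h \delta(h,\alpha/\Sigma)=(2r-1)a_g-\sum_h a_h$ (resp.\ $2r\,a_g-\sum_h a_h$), and these relations let me rewrite $(1-r)\sum_{\B{n}{S}}a_g$ as $a_{1_G}-\tfrac12\sum_{g\in\B{n}{S}\setminus\{1_G\}}\delta(g,\alpha/\Sigma)-\tfrac12\big(\text{boundary term}\big)$, where the boundary term is precisely $\sH(\B{n+1}{S}\cdot\alpha/\B{n}{S}\cdot\alpha\vee\Sigma)=\sum_{\Sp{n+1}{S}}a_g$ up to the contribution already accounted for. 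Carefully carried out, the boundary terms cancel against the $\tfrac12\sH(\B{n+1}{S}\cdot\alpha/\B{n}{S}\cdot\alpha\vee\Sigma)$ term, leaving the clean identity
$$F'_G(X,\mu/\Sigma,S,\alpha,n)=\sH(\alpha/\Sigma)-\frac12\sum_{1_G\neq g\in\B{n}{S}}\delta(g,\alpha/\Sigma).$$

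Granting that identity, the proof finishes by taking $n\to\infty$: the left side converges to $f_G(X,\mu/\Sigma)$ by Lemma~\ref{FINV SPHERE}, and since each $\delta(g,\alpha/\Sigma)\ge0$ the partial sums $\sum_{1_G\neq g\in\B{n}{S}}\delta(g,\alpha/\Sigma)$ are monotone, hence converge in $[0,\infty]$; the identity forces the limit to equal $2(\sH(\alpha/\Sigma)-f_G(X,\mu/\Sigma))$, which is finite exactly when $f_G(X,\mu/\Sigma)\neq-\infty$. In the degenerate case $f_G(X,\mu/\Sigma)=-\infty$ the sum diverges to $+\infty$ and the stated formula still holds as an identity in $[-\infty,\infty)$. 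The main obstacle I anticipate is purely combinatorial: getting the rearrangement of the two sums over the ball and the sphere exactly right, in particular matching the coefficient $(1-r)=\tfrac{2-2r}{2}$ on the ball term against the per-vertex branching number $2r-1$ (and the anomalous $2r$ at the root) so that every $a_g$ with $|g|<n$ cancels and only $a_{1_G}$, the $\delta$'s, and the top-sphere terms survive. This is where one must be meticulous, but it is a finite algebraic identity for each $n$ with no analytic subtlety. A secondary, minor point is justifying the termwise telescoping of Shannon entropy for the countably infinite partitions $\B{n}{S}\cdot\alpha$, which follows from Lemma~\ref{LEM SHAN}(iii) applied finitely many times since $\B{n}{S}$ is finite and $\sH(\alpha)<\infty$.
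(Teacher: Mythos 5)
Your proposal is correct and takes essentially the same route as the paper's proof: it combines Lemma \ref{FINV SPHERE} with the chain-rule identity $\sH(\B{n}{S} \cdot \alpha / \B{n-1}{S} \cdot \alpha \vee \Sigma) = \sum_{g \in \Sp{n}{S}} \sH(g \cdot \alpha / \Pre(g) \cdot \alpha \vee \Sigma)$ and the branching counts ($2r$ children at $1_G$, $2r-1$ elsewhere), telescoped along the well-ordering $\preceq$. The only slip is an index shift in your finite identity — the careful bookkeeping yields $F'_G(X, \mu / \Sigma, S, \alpha, n) = \sH(\alpha / \Sigma) - \frac{1}{2} \sum_{1_G \neq g \in \B{n+1}{S}} \delta(g, \alpha / \Sigma)$, with the partial sum over $\B{n+1}{S}$ rather than $\B{n}{S}$ — which is harmless in the limit $n \rightarrow \infty$.
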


\begin{proof}
Let $r$ denote the rank of $G$. As in the proof of Lemma \ref{FINV SPHERE}, define
$$F'_G(X, \mu / \Sigma, S, \alpha, n) = (1 - r) \cdot \sH(\B{n}{S} \cdot \alpha / \Sigma) + \frac{1}{2} \cdot \sH(\B{n+1}{S} \cdot \alpha / \B{n}{S} \cdot \alpha \vee \Sigma).$$
We have
$$F'_G(X, \mu / \Sigma, S, \alpha, 0) = (1 - r) \sH(\alpha / \Sigma) + \frac{1}{2} \cdot \sH(\B{1}{S} \cdot \alpha / \alpha \vee \Sigma)$$
$$= \sH(\alpha / \Sigma) - \frac{1}{2} \cdot \Big( 2r \cdot \sH(\alpha / \Sigma) - \sH(\B{1}{S} \cdot \alpha / \alpha \vee \Sigma) \Big).$$
The difference between consecutive $F_G'$ terms can be rewritten in a similar manner:
$$2 \cdot F'_G(X, \mu / \Sigma, S, \alpha, n) - 2 \cdot F'_G(X, \mu / \Sigma, S, \alpha, n + 1)$$
$$= 2(1 - r) \sH(\B{n}{S} \cdot \alpha / \Sigma) + \sH(\B{n+1}{S} \cdot \alpha / \B{n}{S} \cdot \alpha \vee \Sigma)$$
$$- 2(1 - r) \sH(\B{n+1}{S} \cdot \alpha / \Sigma) - \sH(\B{n+2}{S} \cdot \alpha / \B{n+1}{S} \cdot \alpha \vee \Sigma)$$
$$= (2r - 1) \sH(\B{n+1}{S} \cdot \alpha / \B{n}{S} \cdot \alpha \vee \Sigma) - \sH(\B{n+2}{S} \cdot \alpha / \B{n+1}{S} \cdot \alpha \vee \Sigma).$$
Putting these together and using Lemma \ref{FINV SPHERE} we have that $f_G(X, \mu / \Sigma)$ equals
$$F'_G(X, \mu / \Sigma, S, \alpha, 0) - \frac{1}{2} \cdot \sum_{n = 1}^\infty \Big( 2 \cdot F'_G(X, \mu / \Sigma, S, \alpha, n - 1) - 2 \cdot F'_G(X, \mu / \Sigma, S, \alpha, n) \Big)$$
$$= \sH(\alpha / \Sigma) - \frac{1}{2} \cdot \Big( 2r \cdot \sH(\alpha / \Sigma) - \sH(\B{1}{S} \cdot \alpha / \alpha \vee \Sigma) \Big)$$
$$- \frac{1}{2} \cdot \sum_{n = 1}^\infty \Big( (2r - 1) \sH(\B{n}{S} \cdot \alpha / \B{n-1}{S} \cdot \alpha \vee \Sigma) - \sH(\B{n+1}{S} \cdot \alpha / \B{n}{S} \cdot \alpha \vee \Sigma) \Big).$$

Let $|g|$ denote the reduced $S$-word length of $g \in G$. Since $|g| < |h|$ implies $g \preceq h$ it follows from Lemma \ref{LEM SHAN} that
$$\sH(\B{n}{S} \cdot \alpha / \B{n-1}{S} \cdot \alpha \vee \Sigma) = \sum_{g \in \Sp{n}{S}} \sH(g \cdot \alpha / \Pre(g) \cdot \alpha \vee \Sigma).$$
Since $|\Sp{1}{S}| = 2r$ we have
\begin{equation} \label{EQ1}
\sum_{g \in \Sp{1}{S}} \delta(g, \alpha / \Sigma) = 2r \cdot \sH(\alpha / \Sigma) - \sH(\B{1}{S} \cdot \alpha / \alpha \vee \Sigma).
\end{equation}
Similarly, for $n \geq 1$ each element of $\Sp{n}{S}$ is adjacent to $2r - 1$ many points of $\Sp{n+1}{S}$ in the left $S$-Cayley graph of $G$. So
$$\sum_{g \in \Sp{n+1}{S}} \delta(g, \alpha / \Sigma) = (2r - 1) \sH(\B{n}{S} \cdot \alpha / \B{n-1}{S} \cdot \alpha \vee \Sigma) - \sH(\B{n+1}{S} \cdot \alpha / \B{n}{S} \cdot \alpha \vee \Sigma).$$
It follows from the equation at the end of the previous paragraph that
$$f_G(X, \mu / \Sigma) = \sH(\alpha / \Sigma) - \frac{1}{2} \cdot \sum_{g \in \Sp{1}{S}} \delta(g, \alpha / \Sigma) - \frac{1}{2} \cdot \sum_{n = 1}^\infty \sum_{g \in \Sp{n+1}{S}} \delta(g, \alpha / \Sigma)$$
\[= \sH(\alpha / \Sigma) - \frac{1}{2} \cdot \sum_{1_G \neq g \in G} \delta(g, \alpha / \Sigma). \qedhere\]
\end{proof}

The above theorem implies that when $f_G(X, \mu) \neq -\infty$, $\delta(g, \alpha)$ tends to $0$ as $|g|$ tends to infinity. We mention that the values $\delta(g, \alpha)$ do not satisfy any monotone properties -- the independence decay can be zero for a very long time and then become positive, in fact it can fluctuate between being positive and zero. If $\delta(h, \alpha) = 0$ for all $h \succeq g$, then $(X, \mu)$ is measurably conjugate to a Markov process, and the Markov partition is $\Pre(g) \cdot \alpha$ (see \cite{B10d}). In fact, since $\Pre(g)$ is left $S$-connected, one can always model $(X, \mu)$ by an action for which the independence decay at $h$ is $0$ for every $h \succeq g$ (this is called a \emph{Markov approximation} to the action, see \cite{S12}). One could take the viewpoint that independence decay measures the error in these Markov approximations. Then the above theorem would say that if $f_G(X, \mu) \neq -\infty$ then these Markov approximations converge rapidly enough to the action so that the errors are summable.

The following simple lemma will have some applications in future sections.

\begin{lem} \label{LEM PATH}
Let $G$ act on a probability space $(X, \mu)$ and let $\Sigma$ be a $G$-invariant sub-$\sigma$-algebra. Assume that there is a generating partition $\alpha$ with $\sH(\alpha) < \infty$. If $p_0, p_1, \ldots, p_n$ is a path in the left $S$-Cayley graph of $G$ with $|p_0| < |p_1| < \cdots < |p_n|$, then
$$\sum_{i = 1}^n \delta(p_i, \alpha / \Sigma) = \sH(p_0 \cdot \alpha / \Pre(p_0) \cdot \alpha \vee \Sigma) - \sH(p_n \cdot \alpha / \Pre(p_n) \cdot \alpha \vee \Sigma).$$
\end{lem}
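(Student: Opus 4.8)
The plan is to recognize the sum as a telescoping series. By definition, for each $i$ with $1 \leq i \leq n$, if $t_i \in S \cup S^{-1}$ is the generator with $|t_i^{-1} p_i| = |p_i| - 1$, then
$$\delta(p_i, \alpha / \Sigma) = \sH(t_i^{-1} p_i \cdot \alpha / \Pre(t_i^{-1} p_i) \cdot \alpha \vee \Sigma) - \sH(p_i \cdot \alpha / \Pre(p_i) \cdot \alpha \vee \Sigma).$$
The key observation is that since $p_{i-1}, p_i$ are adjacent in the left $S$-Cayley graph and $|p_{i-1}| < |p_i|$, we must have $p_i = t_i p_{i-1}$ for the generator $t_i$ with $|t_i^{-1} p_i| = |p_i| - 1$; equivalently $t_i^{-1} p_i = p_{i-1}$. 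Hence the first term of $\delta(p_i, \alpha / \Sigma)$ is exactly $\sH(p_{i-1} \cdot \alpha / \Pre(p_{i-1}) \cdot \alpha \vee \Sigma)$, which coincides with the (negative of the) second term of $\delta(p_{i-1}, \alpha / \Sigma)$.

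First I would check that $p_i = t_i p_{i-1}$ where $t_i^{-1} p_i$ has length $|p_i|-1$: adjacency in the left $S$-Cayley graph means $p_i = s p_{i-1}$ for some $s \in S \cup S^{-1}$, and since $|p_i| = |p_{i-1}| + 1 > |p_{i-1}|$, multiplying $p_i$ on the left by $s^{-1}$ decreases length, so $s$ is the required generator $t_i$ and $t_i^{-1} p_i = p_{i-1}$. This is the one spot requiring a small argument, and it is the main (very minor) obstacle: one must know that each vertex on the path, apart from $p_0$, has a \emph{unique} neighbor of smaller length in the tree (which is immediate since $G$ is free, so its Cayley graph is a tree), so the generator $t_i$ entering the definition of $\delta(p_i,\alpha/\Sigma)$ is forced to be the one pointing back along the path to $p_{i-1}$.

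Then I would write
$$\sum_{i=1}^n \delta(p_i, \alpha / \Sigma) = \sum_{i=1}^n \Big( \sH(p_{i-1} \cdot \alpha / \Pre(p_{i-1}) \cdot \alpha \vee \Sigma) - \sH(p_i \cdot \alpha / \Pre(p_i) \cdot \alpha \vee \Sigma) \Big),$$
and observe that this telescopes to $\sH(p_0 \cdot \alpha / \Pre(p_0) \cdot \alpha \vee \Sigma) - \sH(p_n \cdot \alpha / \Pre(p_n) \cdot \alpha \vee \Sigma)$, which is exactly the claimed identity. Since $\alpha$ has finite Shannon entropy, all the conditional entropies involved are finite, so rearranging the finite sum is unproblematic. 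This completes the proof.
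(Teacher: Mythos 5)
Your proof is correct and follows the same route as the paper: the paper likewise observes that the hypotheses force the generator in the definition of $\delta(p_i,\alpha/\Sigma)$ to point back to $p_{i-1}$, so the sum telescopes. You simply spell out the small verification (that $t_i^{-1}p_i = p_{i-1}$) which the paper leaves implicit.
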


\begin{proof}
The conditions on the $p_i$ imply that
$$\delta(p_i, \alpha / \Sigma) = \sH(p_{i-1} \cdot \alpha / \Pre(p_{i-1}) \cdot \alpha \vee \Sigma) - \sH(p_i \cdot \alpha / \Pre(p_i) \cdot \alpha \vee \Sigma).$$
It follows that all intermediary terms in the sum cancel.
\end{proof}

\section{Kolmogorov--Sinai Entropy} \label{SEC KOLM}

In this section we show that actions with finite (relative) f-invariant entropy are very complicated in the sense that every cyclic subgroup acts with infinite (relative) Kolmogorov--Sinai entropy.

\begin{lem} \label{LEM SMALLSUM}
Let $G$ act on a probability space $(X, \mu)$ and let $\Sigma$ be a $G$-invariant sub-$\sigma$-algebra. If $f_G(X, \mu / \Sigma)$ is defined and $f_G(X, \mu / \Sigma) \neq - \infty$ then for every generating partition $\alpha$ having finite Shannon entropy
$$\lim_{n \rightarrow \infty} \sum_{g \in G \setminus \B{1}{S}} \delta(g, \B{n}{S} \cdot \alpha / \Sigma) = 0.$$
\end{lem}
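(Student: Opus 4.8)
The plan is to exploit the new formula from Theorem~\ref{THM DISS} applied with the generating partition $\B{n}{S} \cdot \alpha$ in place of $\alpha$. First I would observe that $\B{n}{S} \cdot \alpha$ is indeed a generating partition having finite Shannon entropy (since $\alpha$ is), so Theorem~\ref{THM DISS} applies and gives
$$f_G(X, \mu / \Sigma) = \sH(\B{n}{S} \cdot \alpha / \Sigma) - \frac{1}{2} \cdot \sum_{1_G \neq g \in G} \delta(g, \B{n}{S} \cdot \alpha / \Sigma)$$
for every $n$. Since the left-hand side is independent of $n$, rearranging yields
$$\sum_{1_G \neq g \in G} \delta(g, \B{n}{S} \cdot \alpha / \Sigma) = 2 \sH(\B{n}{S} \cdot \alpha / \Sigma) - 2 f_G(X, \mu / \Sigma).$$
The next step is to split off the ball of radius $1$: writing $\Sp{1}{S} = \B{1}{S} \setminus \{1_G\}$, I want to show that the finitely many terms $\sum_{g \in \Sp{1}{S}} \delta(g, \B{n}{S} \cdot \alpha / \Sigma)$ together with $2 \sH(\B{n}{S} \cdot \alpha / \Sigma)$ absorb all the growth, leaving the tail $\sum_{g \in G \setminus \B{1}{S}} \delta(g, \B{n}{S} \cdot \alpha / \Sigma)$ tending to $0$.

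To carry this out I would compute $\sum_{g \in \Sp{1}{S}} \delta(g, \B{n}{S} \cdot \alpha / \Sigma)$ directly from the definition of $\delta$. For $s \in S \cup S^{-1}$ we have $s^{-1} s = 1_G$, so $\delta(s, \B{n}{S} \cdot \alpha / \Sigma) = \sH(\B{n}{S} \cdot \alpha / \Sigma) - \sH(s \B{n}{S} \cdot \alpha / \Pre(s) \cdot \B{n}{S} \cdot \alpha \vee \Sigma)$ (using that $\Pre(1_G) = \varnothing$). Summing over $s \in \Sp{1}{S}$ and using the analog of Equation~\ref{EQ1} from the proof of Theorem~\ref{THM DISS} (with $\alpha$ replaced by $\B{n}{S} \cdot \alpha$), this sum equals $2r \cdot \sH(\B{n}{S} \cdot \alpha / \Sigma) - \sH(\B{1}{S} \cdot \B{n}{S} \cdot \alpha / \B{n}{S} \cdot \alpha \vee \Sigma) = 2r \cdot \sH(\B{n}{S} \cdot \alpha / \Sigma) - \sH(\B{n+1}{S} \cdot \alpha / \B{n}{S} \cdot \alpha \vee \Sigma)$. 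Substituting back, the tail sum becomes
$$\sum_{g \in G \setminus \B{1}{S}} \delta(g, \B{n}{S} \cdot \alpha / \Sigma) = (2 - 2r) \sH(\B{n}{S} \cdot \alpha / \Sigma) + \sH(\B{n+1}{S} \cdot \alpha / \B{n}{S} \cdot \alpha \vee \Sigma) - 2 f_G(X, \mu / \Sigma),$$
which is precisely $2 \bigl( F'_G(X, \mu / \Sigma, S, \alpha, n) - f_G(X, \mu / \Sigma) \bigr)$ in the notation of Lemma~\ref{FINV SPHERE}.

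The conclusion is then immediate: by Lemma~\ref{FINV SPHERE}, $F'_G(X, \mu / \Sigma, S, \alpha, n) \to f_G(X, \mu / \Sigma)$ as $n \to \infty$ (this is where the hypothesis $f_G(X, \mu / \Sigma) \neq -\infty$ is used, though Lemma~\ref{FINV SPHERE} actually gives convergence in all cases), so the right-hand side tends to $0$. I expect the main obstacle to be purely bookkeeping: keeping careful track of how $\Pre$, the ordering $\preceq$, and the ball/sphere decomposition interact when the "letters" of the partition are themselves $\B{n}{S} \cdot \alpha$ rather than $\alpha$. The key identities ($\Pre(1_G) = \varnothing$, $\B{1}{S} \cdot \B{n}{S} \cdot \alpha = \B{n+1}{S} \cdot \alpha$, and the sphere-counting in Equation~\ref{EQ1}) all go through verbatim, so no genuinely new estimate is needed beyond what is already in Section~\ref{SEC DISS}.
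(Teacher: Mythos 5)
Your proposal is correct and follows essentially the same route as the paper: the paper also combines Theorem \ref{THM DISS} with Equation \ref{EQ1} to isolate the tail sum as $2(1-r)\sH(\B{n}{S}\cdot\alpha/\Sigma)+\sH(\B{n+1}{S}\cdot\alpha/\B{n}{S}\cdot\alpha\vee\Sigma)-2f_G(X,\mu/\Sigma)$ and then invokes Lemma \ref{FINV SPHERE}. The only cosmetic difference is that the paper first derives the identity for a general generating partition and then substitutes $\B{n}{S}\cdot\alpha$, whereas you substitute from the start; the content is identical.
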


\begin{proof}
Let $r = |S|$ be the rank of $G$. By Theorem \ref{THM DISS} and Equation \ref{EQ1}
$$f_G(X, \mu / \Sigma) = \sH(\alpha / \Sigma) - \frac{1}{2} \cdot \sum_{1_G \neq g \in \B{1}{S}} \delta(g, \alpha / \Sigma) - \frac{1}{2} \cdot \sum_{g \in G \setminus \B{1}{S}} \delta(g, \alpha / \Sigma)$$
$$= \sH(\alpha / \Sigma) - \frac{1}{2} \Big(2r \cdot \sH(\alpha / \Sigma) - \sH(\B{1}{S} \cdot \alpha / \alpha \vee \Sigma) \Big) - \frac{1}{2} \cdot \sum_{g \in G \setminus \B{1}{S}} \delta(g, \alpha / \Sigma)$$
$$= (1-r) \sH(\alpha / \Sigma) + \frac{1}{2} \cdot \sH(\B{1}{S} \cdot \alpha / \alpha \vee \Sigma) - \frac{1}{2} \cdot \sum_{g \in G \setminus \B{1}{S}} \delta(g, \alpha / \Sigma).$$
This holds for every generating partition $\alpha$. So by replacing $\alpha$ with $\B{n}{S} \cdot \alpha$ we obtain
$$\sum_{g \in G \setminus \B{1}{S}} \delta(g, \B{n}{S} \cdot \alpha / \Sigma) = 2(1 - r) \sH(\B{n}{S} \cdot \alpha / \Sigma) + \sH(\B{n+1}{S} \cdot \alpha / \B{n}{S} \cdot \alpha \vee \Sigma) - 2 \cdot f_G(X, \mu / \Sigma).$$
Now Lemma \ref{FINV SPHERE} and the assumption $f_G(X, \mu / \Sigma) \neq - \infty$ imply that the above expression approaches $0$ as $n$ approaches infinity.
\end{proof}

\begin{lem} \label{LEM TECH}
Let $G$ act on a probability space $(X, \mu)$ and let $\Sigma$ be a $G$-invariant sub-$\sigma$-algebra. Assume that there is a generating partition $\alpha$ having finite Shannon entropy. If $f_G(X, \mu / \Sigma) \neq -\infty$ then for every $\epsilon > 0$ and every $t \in S \cup S^{-1}$
$$\sH(t \B{n}{S} \cdot \alpha / \Pre(t) \B{n}{S} \cdot \alpha \vee \Sigma) > \sH(\B{n-1}{S} \cdot \alpha / \B{n-2}{S} \cdot \alpha \vee \Sigma) - \epsilon$$
for all but finitely many $n \in \N$.
\end{lem}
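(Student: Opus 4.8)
The plan is to express both sides of the claimed inequality as sums of the ``atomic'' conditional entropies $a_g := \sH(g\cdot\alpha/\Pre(g)\cdot\alpha\vee\Sigma)$ appearing in Theorem~\ref{THM DISS}, and then to pin down the exponential growth of the partial sums over spheres. Take $r=|S|\ge 2$ (the statement is applied only when $G$ is non-cyclic), fix a generating partition $\alpha$ with $\sH(\alpha)<\infty$, and set
\[
b_m \;=\; \sum_{g\in\Sp{m}{S}} a_g, \qquad b_m^{(t)} \;=\; \sum_{\substack{g\in\Sp{m}{S}\\ g\ \text{begins with}\ t}} a_g ,
\]
so $b_m=\sum_{t\in S\cup S^{-1}} b_m^{(t)}$. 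The chain rule (exactly as in the proof of Theorem~\ref{THM DISS}) gives $b_m=\sH(\B{m}{S}\cdot\alpha/\B{m-1}{S}\cdot\alpha\vee\Sigma)$, so the right-hand side of the lemma is $b_{n-1}$; and a routine chain-rule computation, using that $t\B{n}{S}\setminus\Pre(t)\B{n}{S}$ is exactly the set of reduced words of length $n+1$ beginning with $t$, identifies the left-hand side as $\sH(t\B{n}{S}\cdot\alpha/\Pre(t)\B{n}{S}\cdot\alpha\vee\Sigma)=b_{n+1}^{(t)}$. Thus the lemma is equivalent to $b_{n+1}^{(t)}>b_{n-1}-\epsilon$ for all large $n$.

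I would first record the recursions. Counting the $2r-1$ geodesic extensions of each element of a sphere, exactly as in Theorem~\ref{THM DISS}, one gets for every $m\ge 1$
\[
b_{m+1}^{(t)} \;=\; (2r-1)\,b_m^{(t)} - \!\!\! \sum_{\substack{g\in\Sp{m+1}{S}\\ g\ \text{begins with}\ t}} \!\!\! \delta(g,\alpha/\Sigma), \qquad b_{m+1} \;=\; (2r-1)\,b_m - \!\!\! \sum_{g\in\Sp{m+1}{S}} \!\!\! \delta(g,\alpha/\Sigma).
\]
Since $f_G(X,\mu/\Sigma)\ne-\infty$, Theorem~\ref{THM DISS} shows $\sum_{1_G\ne g}\delta(g,\alpha/\Sigma)<\infty$, so the nonnegative corrections in these recursions are summable over $m$. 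Dividing by $(2r-1)^{m-1}$ and using this summability: $b_m^{(t)}/(2r-1)^{m-1}$ decreases to a limit $L_t\ge 0$, $b_m/(2r-1)^{m-1}$ decreases to $L:=\sum_{t} L_t\ge 0$, and moreover $b_m^{(t)}\ge (2r-1)^{m-1}L_t$ for all $m$, while $b_m=(2r-1)^{m-1}L+o(1)$.

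The heart of the matter is that all the growth constants $L_t$ are equal. If $v\in G$ does not begin with $t^{-1}$ then $tv$ is reduced with first letter $t$, and comparing $S$-lengths and then lexicographic order one checks that $t\Pre(v)\subseteq\Pre(tv)$; conditioning on a larger $\sigma$-algebra (Lemma~\ref{LEM SHAN}) and using that $\mu$ and $\Sigma$ are $G$-invariant then gives
\[
a_{tv} \;=\; \sH(tv\cdot\alpha/\Pre(tv)\cdot\alpha\vee\Sigma) \;\le\; \sH(tv\cdot\alpha/t\Pre(v)\cdot\alpha\vee\Sigma) \;=\; a_v .
\]
Summing over the $v\in\Sp{m-1}{S}$ not beginning with $t^{-1}$ — which are precisely the $v$ with $tv\in\Sp{m}{S}$ beginning with $t$ — yields $b_m^{(t)}\le b_{m-1}-b_{m-1}^{(t^{-1})}$, and dividing by $(2r-1)^{m-1}$ and letting $m\to\infty$ gives $(2r-1)L_t+L_{t^{-1}}\le L$ for every $t\in S\cup S^{-1}$. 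Summing these $2r$ inequalities and using $\sum_t L_t=\sum_t L_{t^{-1}}=L$ forces equality in each; combining the resulting identity for $t$ with the one for $t^{-1}$ and using $r>1$ gives $L_t=L_{t^{-1}}$, hence $L_t=L/(2r)$ for all $t$.

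Finally, $b_{n+1}^{(t)}-b_{n-1}\ge (2r-1)^n\frac{L}{2r}-\big((2r-1)^{n-2}L+o(1)\big)=(2r-1)^{n-2}L\big(\frac{(2r-1)^2}{2r}-1\big)-o(1)$, and since $(2r-1)^2\ge 2r$ for $r\ge 2$ and $L\ge 0$ the bracketed quantity is nonnegative, so $b_{n+1}^{(t)}-b_{n-1}>-\epsilon$ for all large $n$. I expect the delicate step to be the equidistribution $L_t=L/(2r)$: a head-on comparison of $b_{n+1}^{(t)}$ with $b_{n-1}$ has no chance on its own, since these are sums over sets of sizes $(2r-1)^n$ and $2r(2r-1)^{n-2}$, and it is the monotonicity $a_{tv}\le a_v$ combined with the ``sum to equality'' trick that forces the growth of $b_m$ to be shared evenly among the $2r$ generator directions.
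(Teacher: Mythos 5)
Your reductions of the two sides are fine: the identification $\sH(t\B{n}{S}\cdot\alpha/\Pre(t)\B{n}{S}\cdot\alpha\vee\Sigma)=\sum_{g} a_g$, summed over the $g\in\Sp{n+1}{S}$ beginning with $t$, is correct (one checks $\Pre(g)=\Pre(t)\B{n}{S}\cup\{g'\in\Sp{n+1}{S}: g'\text{ begins with }t,\ g'\prec g\}$ and applies the chain rule), and so is $b_{n-1}=\sH(\B{n-1}{S}\cdot\alpha/\B{n-2}{S}\cdot\alpha\vee\Sigma)$. The gap is in your first displayed recursion. By definition, $\delta(g,\alpha/\Sigma)=a_{g'}-a_g$ where $g'$ is $g$ with its \emph{first} letter deleted; so the $\delta$-parents of the words of length $m+1$ beginning with $t$ are exactly the words of length $m$ \emph{not} beginning with $t^{-1}$, not the words beginning with $t$. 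The correct relation is
$$b_{m+1}^{(t)} \;=\; b_m - b_m^{(t^{-1})} - \sum_{\substack{g\in\Sp{m+1}{S}\\ g\ \text{begins with}\ t}}\delta(g,\alpha/\Sigma),$$
a coupled system, whereas your claimed decoupled recursion $b_{m+1}^{(t)}=(2r-1)b_m^{(t)}-\sum(\cdots)$ would only be valid for last-letter classes (already for $r=2$, $m=1$ it asserts $a_x+a_y+a_{y^{-1}}=3a_x$ up to the $\delta$'s, which is false in general). Since this recursion is what gives you the monotonicity of $b_m^{(t)}/(2r-1)^{m-1}$, hence the existence of $L_t$ and, crucially, the finite-$m$ lower bound $b_m^{(t)}\ge(2r-1)^{m-1}L_t$ used in the last step, the argument as written collapses: mere convergence (even if equidistribution $L_t=L/(2r)$ were established, and your derivation of it also presupposes the limits $L_t$) does not give the pointwise lower bound you need to beat $b_{n-1}$.

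The strategy itself is salvageable, and in fact the corrected recursion makes the growth constants unnecessary: your (correct) inequality $b_m^{(t)}\le b_{m-1}-b_{m-1}^{(t^{-1})}$ applied to $t^{-1}$, together with $b_n=(2r-1)b_{n-1}-\Delta_n$ where $\Delta_n=\sum_{g\in\Sp{n}{S}}\delta(g,\alpha/\Sigma)$ is summable (Theorem \ref{THM DISS}), gives
$$b_{n+1}^{(t)} \;\ge\; b_n-b_n^{(t^{-1})}-\Delta_{n+1} \;\ge\; b_n-b_{n-1}-\Delta_{n+1} \;\ge\; (2r-2)\,b_{n-1}-\Delta_n-\Delta_{n+1} \;\ge\; b_{n-1}-\Delta_n-\Delta_{n+1},$$
using $r\ge 2$ and $b_{n-1}\ge 0$, and $\Delta_n+\Delta_{n+1}\to 0$ finishes the proof. (As you note, $r\ge 2$ is genuinely needed; the statement fails for $\Z$, and the paper's own proof also implicitly uses it.) This repaired route differs from the paper's, which never works with the sphere sums of the fixed partition $\alpha$ but instead applies the decay estimate to the inflated partitions $\B{n}{S}\cdot\alpha$ via Lemma \ref{LEM SMALLSUM} and a two-step expansion over letters; but as submitted, the key recursion and everything resting on it is incorrect.
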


\begin{proof}
From Lemma \ref{LEM SHAN} we obtain
$$\sH(t \B{n}{S} \cdot \alpha / \Pre(t) \B{n}{S} \cdot \alpha \vee \Sigma) = \sum_{s \in S \cup S^{-1}} \sH(t s \B{n-1}{S} \cdot \alpha / t \Pre(s) \B{n-1}{S} \cdot \alpha \vee \Pre(t) \B{n}{S} \cdot \alpha \vee \Sigma)$$
$$= \sum_{t^{-1} \neq s \in S \cup S^{-1}} \sH(t s \B{n-1}{S} \cdot \alpha / t \Pre(s) \B{n-1}{S} \cdot \alpha \vee \Pre(t) \B{n}{S} \cdot \alpha \vee \Sigma).$$
Notice that if $t^*$ precedes $t$ in the well ordering of $G$ then $t^* s^*$ precedes $t s$ for every $s, s^* \in S \cup S^{-1}$ with $s \neq t^{-1}$. Therefore for every $s \in S \cup S^{-1}$ with $s \neq t^{-1}$ we have
$$\Pre(t) \cdot (S \cup S^{-1}) \subseteq \Pre(t s)$$
and thus
$$\Pre(t) \B{n}{S} = \Pre(t) (S \cup S^{-1}) \B{n-1}{S} \subseteq \Pre(t s) \B{n-1}{S}.$$
Additionally, if $s^*$ precedes $s$ then $t s^*$ will precede $t s$ provided $s \neq t^{-1}$. Thus for $s \in S \cup S^{-1}$ with $s \neq t^{-1}$ we have $t \Pre(s) \B{n-1}{S} \subseteq \Pre(t s) \B{n-1}{S}$. It follows that
$$\sH(t \B{n}{S} \cdot \alpha / \Pre(t) \B{n}{S} \cdot \alpha \vee \Sigma) \geq \sum_{t^{-1} \neq s \in S \cup S^{-1}} \sH(t s \B{n-1}{S} \cdot \alpha / \Pre(t s) \B{n-1}{S} \cdot \alpha \vee \Sigma).$$
For each $t^{-1} \neq s \in S \cup S^{-1}$ we have
$$\delta(t s, \B{n-1}{S} \cdot \alpha / \Sigma) = \sH(s \B{n-1}{S} \cdot \alpha / \Pre(s) \B{n-1}{S} \cdot \alpha \vee \Sigma) - \sH(t s \B{n-1}{S} \cdot \alpha / \Pre(t s) \B{n-1}{S} \cdot \alpha \vee \Sigma).$$
So we deduce
$$\sH(t \B{n}{S} \cdot \alpha / \Pre(t) \B{n}{S} \cdot \alpha \vee \Sigma) \geq \sum_{t^{-1} \neq s \in S \cup S^{-1}} \sH(s \B{n-1}{S} \cdot \alpha / \Pre(s) \B{n-1}{S} \cdot \alpha \vee \Sigma) - \delta(t s, \B{n-1}{S} \cdot \alpha / \Sigma).$$
By Lemma \ref{LEM SMALLSUM},
$$\sum_{t^{-1} \neq s \in S \cup S^{-1}} \delta(t s, \B{n-1}{S} \cdot \alpha / \Sigma) < \epsilon / (2r)$$
for all but finitely many $n$, where $r = |S|$ is the rank of $G$. So in this case we have
$$\sH(t \B{n}{S} \cdot \alpha / \Pre(t) \B{n}{S} \cdot \alpha \vee \Sigma) \geq - \epsilon / (2r) + \sum_{t^{-1} \neq s \in S \cup S^{-1}} \sH(s \B{n-1}{S} \cdot \alpha / \Pre(s) \B{n-1}{S} \cdot \alpha \vee \Sigma).$$
The entire argument up to this point can be repeated with $n$ replaced with $n-1$ and $t$ replaced by $s$, which implies that $\sH(t \B{n}{S} \cdot \alpha / \Pre(t) \B{n}{S} \cdot \alpha \vee \Sigma)$ is at least
$$- \epsilon / (2r) + \sum_{t^{-1} \neq s \in S \cup S^{-1}} \Bigg( - \epsilon / (2r) + \sum_{s^{-1} \neq u \in S \cup S^{-1}} \sH(u \B{n-2}{S} \cdot \alpha / \Pre(u) \B{n-2}{S} \cdot \alpha \vee \Sigma) \Bigg).$$
Now for every $u \in S \cup S^{-1}$ one can find $s \in S \cup S^{-1}$ with $u \neq s^{-1}$ and $s \neq t^{-1}$. Therefore
$$\sH(t \B{n}{S} \cdot \alpha / \Pre(t) \B{n}{S} \cdot \alpha \vee \Sigma) \geq - 2r \cdot \epsilon / (2r) + \sum_{u \in S \cup S^{-1}} \sH(u \B{n-2}{S} \cdot \alpha / \Pre(u) \B{n-2}{S} \cdot \alpha \vee \Sigma)$$
$$= \sH(\B{n-1}{S} \cdot \alpha / \B{n-2}{S} \cdot \alpha \vee \Sigma) - \epsilon$$
for all but finitely many $n$ (the final equality follows from Lemma \ref{LEM SHAN}).
\end{proof}

Before stating the main theorem of this section, we remind the reader of the definition of (relative) Kolmogorov--Sinai entropy. Let $\Z \acts (X, \mu)$, let $\Sigma$ be a $\Z$-invariant sub-$\sigma$-algebra, and let $\alpha$ be a finite Shannon entropy partition of $X$. Say $\Z = \langle z \rangle$ is generated by $z$. Define
$$\h_\Z(X, \mu / \Sigma, \alpha) = \lim_{k \rightarrow \infty} \sH \left( \alpha \middle/ \bigvee_{m = 1}^k z^{-m} \cdot \alpha \vee \Sigma \right).$$
The terms on the right are decreasing with $k$ (as can be seen from Lemma \ref{LEM SHAN}) and thus the limit exists. The Kolmogorov--Sinai entropy of $\Z \acts (X, \mu)$ relative to $\Sigma$, denoted $\h_\Z(X, \mu / \Sigma)$, is defined to be the supremum of $\h_\Z(X, \mu / \Sigma, \alpha)$ as $\alpha$ ranges over all partitions of $X$ having finite Shannon entropy. In the case $\Sigma = \{X, \varnothing\}$ is trivial, this is called the Kolmogorov--Sinai entropy of the action and is denoted $\h_\Z(X, \mu)$. The importance of relative Kolmogorov--Sinai entropy is that it relates the entropy of an action with the entropy of a factor. Specifically, if $\Z \acts (Y, \nu)$ is the factor induced by $\Sigma$ then
$$\h_\Z(X, \mu) = \h_\Z(Y, \nu) + \h_\Z(X, \mu / \Sigma).$$
The theorem below says that actions with finite f-invariant entropy are complicated in the sense of Kolmogorov--Sinai entropy.

\begin{thm} \label{THM KOLM}
Let $G$ have rank $r > 1$, let $G$ act on a probability space $(X, \mu)$, and let $\Sigma$ be a $G$-invariant sub-$\sigma$-algebra. Assume that $f_G(X, \mu / \Sigma)$ is defined. Let $(Y, \nu)$ be the factor of $(X, \mu)$ obtained from $\Sigma$, and let $\{\mu_y \: y \in Y\}$ be the disintegration of $\mu$ over $\nu$. If $f_G(X, \mu / \Sigma) \neq - \infty$ and $\nu(\{y \in Y \: \mu_y \text{ is not purely atomic}\}) > 0$ then for every $1_G \neq g \in G$ we have
$$\h_{\langle g \rangle} (X, \mu / \Sigma) = \infty.$$
\end{thm}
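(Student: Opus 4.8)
The plan is to show $\h_{\langle g\rangle}(X,\mu/\Sigma)=\infty$ by exhibiting, for every $N$, a partition $\beta$ of finite Shannon entropy with $\h_{\langle g\rangle}(X,\mu/\Sigma,\beta)>N$; since relative Kolmogorov--Sinai entropy is the supremum of $\h_{\langle g\rangle}(X,\mu/\Sigma,\beta)$ over such $\beta$, this suffices. First I would normalize $g$. Because $\Sigma$ is $G$-invariant and the transformation $g$ is conjugate in $\Aut(X,\mu)$ to the transformation $hgh^{-1}$ via $h$, we have $\h_{\langle g\rangle}(X,\mu/\Sigma)=\h_{\langle hgh^{-1}\rangle}(X,\mu/\Sigma)$, so we may assume $g$ is cyclically reduced; write $g=g_1\cdots g_\ell$ for its reduced $S$-word. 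Since $\preceq$ is built from an arbitrary total order on $S\cup S^{-1}$ and all the results we invoke (Theorem \ref{THM DISS}, Corollary \ref{COR GROWTH}, Lemma \ref{LEM TECH}, Lemma \ref{LEM SMALLSUM}) hold for any such order, we may also arrange that $g_\ell^{-1}$ is the largest letter.

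Fix a finite Shannon entropy generating partition $\alpha$ (one exists since $f_G(X,\mu/\Sigma)$ is defined), fix large $n$, and set $\beta=\B{n}{S}\cdot\alpha$ and $C_n=\bigcup_{m\geq1}g^{-m}\B{n}{S}$, so by definition $\h_{\langle g\rangle}(X,\mu/\Sigma,\B{n}{S}\cdot\alpha)=\sH(\B{n}{S}\cdot\alpha/C_n\cdot\alpha\vee\Sigma)$. Two combinatorial facts about $C_n$ drive the argument. (i) If $w\in\Sp{n}{S}$ and the reduced word of $w$ does not begin with $g_\ell^{-1}$, then $|g^m w|=m\ell+n>n$ for all $m\geq1$ (cyclic reducedness forbids cancellation at the junction and within $g^m$), so $w\notin C_n$; let $U_n$ denote the set of all such $w$, so that $|U_n|=(2r-1)^n$ — it is here that the hypothesis $r>1$ makes $U_n$ exponentially large. (ii) $E_n:=C_n\setminus\B{n-1}{S}$ is contained in the subtree $\{u:u\text{ begins with }g_\ell^{-1}\}$: if $g^{-m}w$ has length $\geq n$ then not all of $g^{-m}$ cancels against $w$, so the leading letter $g_\ell^{-1}$ of $g^{-m}$ survives in the reduced word.

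Next I would decompose along $\preceq$. Since $\Pre(w)\subseteq\B{n}{S}$ for $w\in\B{n}{S}$, Lemma \ref{LEM SHAN} gives $\sH(\B{n}{S}\cdot\alpha/C_n\cdot\alpha\vee\Sigma)=\sum_{w\in\B{n}{S}}\sH(w\cdot\alpha/\Pre(w)\cdot\alpha\vee C_n\cdot\alpha\vee\Sigma)\geq\sum_{w\in U_n}\sH(w\cdot\alpha/\Pre(w)\cdot\alpha\vee C_n\cdot\alpha\vee\Sigma)$. For $w\in U_n$ one checks $\Pre(w)\cup C_n=\Pre(w)\sqcup E_n$: the part of $C_n$ inside $\B{n-1}{S}$ is absorbed into $\Pre(w)$, while (by $g_\ell^{-1}=\max$) the length-$n$ part of $E_n$ follows $w$ in $\preceq$. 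Writing each summand as $\sH(w\cdot\alpha/\Pre(w)\cdot\alpha\vee\Sigma)-I\!\left(w\cdot\alpha;E_n\cdot\alpha\mid\Pre(w)\cdot\alpha\vee\Sigma\right)$ (with $I$ conditional mutual information) and summing yields $\h_{\langle g\rangle}(X,\mu/\Sigma,\B{n}{S}\cdot\alpha)\geq B_n-I_n$, where $B_n=\sum_{w\in U_n}\sH(w\cdot\alpha/\Pre(w)\cdot\alpha\vee\Sigma)$ and $I_n=\sum_{w\in U_n}I\!\left(w\cdot\alpha;E_n\cdot\alpha\mid\Pre(w)\cdot\alpha\vee\Sigma\right)$. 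To see $B_n\to\infty$, pick any one letter $s\neq g_\ell^{-1}$; a telescoping identical to the one in the proof of Theorem \ref{THM DISS} (using $\Pre(s)\B{n-1}{S}\cup\{v\in\Sp{n}{S}:v_1=s,\ v\prec w\}=\Pre(w)$) shows $\sum_{w\in\Sp{n}{S},\,w_1=s}\sH(w\cdot\alpha/\Pre(w)\cdot\alpha\vee\Sigma)=\sH(s\B{n-1}{S}\cdot\alpha/\Pre(s)\B{n-1}{S}\cdot\alpha\vee\Sigma)$, which by Lemma \ref{LEM TECH} exceeds $\sH(\B{n-2}{S}\cdot\alpha/\B{n-3}{S}\cdot\alpha\vee\Sigma)-\epsilon$ for all large $n$; since $r>1$, $f_G(X,\mu/\Sigma)\neq-\infty$, and some fibre $\mu_y$ is not purely atomic, Corollary \ref{COR GROWTH} makes this tend to infinity, and as $U_n$ contains all of $\Sp{n}{S}$ with first letter $s$, $B_n$ is bounded below by it, so $B_n\to\infty$.

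The hard part is bounding $I_n$. Running the chain rule for conditional mutual information along $U_n$ ordered by $\preceq$ — using once more that $g_\ell^{-1}=\max$ makes the length-$n$ $\preceq$-predecessors of $w\in U_n$ exactly the elements of $U_n$ preceding $w$ — gives $I_n=I\!\left(U_n\cdot\alpha;E_n\cdot\alpha\mid\B{n-1}{S}\cdot\alpha\vee\Sigma\right)$. Now $U_n$ sits in the subtrees at $1_G$ below letters other than $g_\ell^{-1}$, while $E_n$ sits in the subtree below $g_\ell^{-1}$ at depth at least $n$; thus $\{1_G\}\subseteq\B{n-1}{S}$ separates $U_n$ from $E_n$ in the left $S$-Cayley graph, and $\B{n-1}{S}$ moreover fills the first $n-1$ layers of both relevant subtrees. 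I expect $I_n$ to stay bounded as $n\to\infty$: the process $\{h\cdot\alpha\}_{h\in G}$ is a small perturbation of a tree-indexed Markov process — for a genuine such Markov process, conditioning on a set separating two regions makes their mutual information vanish — and the failure is measured by the independence decays $\delta(h,\alpha/\Sigma)$, whose sum $\sum_{1_G\neq h\in G}\delta(h,\alpha/\Sigma)$ is finite by Theorem \ref{THM DISS}. The plan is to truncate $E_n$ to a large finite radius (at negligible cost), expand the resulting finite mutual information into differences of conditional entropies, estimate these along geodesics using the path-additivity of independence decay (Lemma \ref{LEM PATH}), and show that the total is dominated by a tail of that convergent series, which vanishes (Lemma \ref{LEM SMALLSUM} being exactly the tool that makes such tails — after the substitution of $\B{m}{S}\cdot\alpha$ for $\alpha$ — go to $0$). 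Making this estimate precise, in particular controlling long-range correlations across the separating ball $\B{n-1}{S}$ uniformly in $n$ and handling the unbounded depth of $E_n$, is the technical crux. Granting it, $\h_{\langle g\rangle}(X,\mu/\Sigma,\B{n}{S}\cdot\alpha)\geq B_n-I_n\to\infty$, so a large enough $n$ furnishes a partition with relative Kolmogorov--Sinai entropy exceeding $N$, which completes the proof.
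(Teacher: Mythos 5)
Your reductions and the combinatorial groundwork are sound: the conjugation of $g$ to a cyclically reduced element (using $G$-invariance of $\Sigma$), the identity $\h_{\langle g\rangle}(X,\mu/\Sigma,\B{n}{S}\cdot\alpha)=\sH(\B{n}{S}\cdot\alpha/C_n\cdot\alpha\vee\Sigma)$, the facts $U_n\cap C_n=\varnothing$ and $E_n\subseteq\{$words beginning with $g_\ell^{-1}\}$, and the divergence $B_n\to\infty$ via the telescoping identity together with Lemma \ref{LEM TECH} and Corollary \ref{COR GROWTH} all check out, and that last step is exactly how the paper produces its divergent term. But the proof is not complete: the bound on $I_n$, which you yourself identify as the technical crux and then explicitly grant, is a genuine gap, and it is not a routine one. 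Crude monotonicity only gives $I_n\leq I\bigl(\B{n}{S}\cdot\alpha;E_n\cdot\alpha\mid\B{n-1}{S}\cdot\alpha\vee\Sigma\bigr)$, and since $\sH(\B{n}{S}\cdot\alpha/\B{n-1}{S}\cdot\alpha\vee\Sigma)\to\infty$ by Corollary \ref{COR GROWTH}, boundedness of $I_n$ cannot be read off from any coarse estimate. Moreover, the summability of $\sum_{1_G\neq h}\delta(h,\alpha/\Sigma)$ from Theorem \ref{THM DISS} controls conditional entropies of the single partition $\alpha$ relative to the $\Pre(\cdot)$-algebras; converting that into a uniform-in-$n$ bound on a conditional mutual information between the two infinite translate families $U_n\cdot\alpha$ and $E_n\cdot\alpha$ (the latter of unbounded depth) across the ball $\B{n-1}{S}$ requires exactly the quantitative work you defer, and the tool that would make the ``tail'' vanish is Lemma \ref{LEM SMALLSUM} for the blown-up partitions $\B{n}{S}\cdot\alpha$ --- but wiring it into your $I_n$ has not been done, and it is not clear it can be done without essentially reproducing the paper's argument.

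The paper's proof bypasses the mutual-information issue entirely. Since $|g^m|<|g^k|$ for $0\leq m<k$, the algebra $\bigvee_{m=0}^{k-1}g^m\B{n}{S}\cdot\alpha$ is coarser than $\Pre(g^k)\B{n}{S}\cdot\alpha$, so
$$\h_{\langle g\rangle}(X,\mu/\Sigma,\B{n}{S}\cdot\alpha)\;\geq\;\limsup_{k\to\infty}\sH\bigl(g^k\B{n}{S}\cdot\alpha\,/\,\Pre(g^k)\B{n}{S}\cdot\alpha\vee\Sigma\bigr),$$
and Lemma \ref{LEM PATH}, applied to the partition $\B{n}{S}\cdot\alpha$ along a word-length-increasing path from a suitable single letter $t$ to $g^k$, bounds the right-hand side below by $\sH(t\B{n}{S}\cdot\alpha/\Pre(t)\B{n}{S}\cdot\alpha\vee\Sigma)-\sum_{u\in G\setminus\B{1}{S}}\delta(u,\B{n}{S}\cdot\alpha/\Sigma)$. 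Lemma \ref{LEM SMALLSUM} kills the error term as $n\to\infty$, and Lemma \ref{LEM TECH} plus Corollary \ref{COR GROWTH} make the main term diverge --- with no cyclic reduction, no special ordering of $S\cup S^{-1}$, and no estimate of correlations across a separating set. I would recommend either carrying out your $I_n$ estimate in full detail or replacing your decomposition by this conditioning step, which your $B_n$ computation already parallels.
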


We remind the reader that when $\Sigma = \{X, \varnothing\}$ is trivial, one only needs to assume that $f_G(X, \mu) \neq -\infty$ and that $\mu$ is not purely atomic.

\begin{proof}
For $g \in G$ let $|g|$ denote $S$-word-length of $g$. Let $\alpha$ be a generating partition having finite Shannon entropy. Fix $1_G \neq g \in G$. It suffices to show that $\lim_{n \rightarrow \infty} \h_{\langle g \rangle} (X, \mu / \Sigma, \B{n}{S} \cdot \alpha) = \infty$. We have
$$\h_{\langle g \rangle} (X, \mu / \Sigma, \B{n}{S} \cdot \alpha) = \lim_{k \rightarrow \infty} \sH \left( \B{n}{S} \cdot \alpha \middle/ \bigvee_{m = 1}^k g^{-m} \B{n}{S} \cdot \alpha \vee \Sigma \right)$$
$$= \lim_{k \rightarrow \infty} \sH \left(g^k \B{n}{S} \cdot \alpha \middle/ \bigvee_{m = 0}^{k-1} g^m \B{n}{S} \cdot \alpha \vee \Sigma \right).$$
Since $g^m$ has shorter $S$-word-length than $g^k$ for each $0 \leq m < k$ we have
$$\bigvee_{m = 0}^{k-1} g^m \B{n}{S} \cdot \alpha \quad \text{is coarser than} \quad \Pre(g^k) \B{n}{S} \cdot \alpha.$$
Therefore
$$\h_{\langle g \rangle} (X, \mu / \Sigma, \B{n}{S} \cdot \alpha) \geq \limsup_{k \rightarrow \infty} \sH(g^k \B{n}{S} \cdot \alpha / \Pre(g^k) \B{n}{S} \vee \Sigma).$$
Let $t \in S \cup S^{-1}$ be the left-most letter in the reduced $S$-word representation of $g$. Fix $k$ and let $p_0, p_1, \ldots, p_\ell$ be a path in the left $S$-Cayley graph of $G$ with $p_0 = t$, $p_\ell = g^k$, and
$$1 = |p_0| < |p_1| < \cdots < |p_\ell|.$$
By applying Lemma \ref{LEM PATH} with respect to the partition $\B{n}{S} \cdot \alpha$ we obtain
$$\sum_{i = 1}^\ell \delta(p_i, \B{n}{S} \cdot \alpha / \Sigma) = \sH(t \B{n}{S} \cdot \alpha / \Pre(t) \B{n}{S} \cdot \alpha \vee \Sigma) - \sH(g^k \B{n}{S} \cdot \alpha / \Pre(g^k) \B{n}{S} \cdot \alpha \vee \Sigma)$$
and hence
$$\sH(g^k \B{n}{S} \cdot \alpha / \Pre(g^k) \B{n}{S} \cdot \alpha \vee \Sigma) \geq \sH(t \B{n}{S} \cdot \alpha / \Pre(t) \B{n}{S} \cdot \alpha \vee \Sigma) - \sum_{u \in G \setminus \B{1}{S}} \delta(u, \B{n}{S} \cdot \alpha / \Sigma).$$
Taking the limit supremum as $k \rightarrow \infty$ we obtain
\begin{equation} \label{EQ2}
\h_{\langle g \rangle} (X, \mu / \Sigma, \B{n}{S} \cdot \alpha) \geq \sH(t \B{n}{S} \cdot \alpha / \Pre(t) \B{n}{S} \cdot \alpha \vee \Sigma) - \sum_{u \in G \setminus \B{1}{S}} \delta(u, \B{n}{S} \cdot \alpha / \Sigma).
\end{equation}
Now by applying Lemmas \ref{LEM SMALLSUM} and \ref{LEM TECH} we have that for all but finitely many $n$
$$\h_{\langle g \rangle} (X, \mu / \Sigma, \B{n}{S} \cdot \alpha) \geq \sH(\B{n-1}{S} \cdot \alpha / \B{n-2}{S} \cdot \alpha \vee \Sigma) - 2 \epsilon.$$
Therefore Corollary \ref{COR GROWTH} implies that $\h_{\langle g \rangle} (X, \mu / \Sigma, \B{n}{S} \cdot \alpha)$ tends to infinity.
\end{proof}

\begin{cor} \label{COR KOLMGROW}
Let $G$ have rank $r > 1$, let $G$ act on a probability space $(X, \mu)$, and let $\Sigma$ be a $G$-invariant sub-$\sigma$-algebra. Assume that $f_G(X, \mu / \Sigma)$ is defined. Let $(Y, \nu)$ be the factor of $(X, \mu)$ obtained from $\Sigma$, and let $\{\mu_y \: y \in Y\}$ be the disintegration of $\mu$ over $\nu$. If $f_G(X, \mu / \Sigma) \neq - \infty$ and $\nu(\{y \in Y \: \mu_y \text{ is not purely atomic}\}) > 0$ then for every $1_G \neq g \in G$ and every finite Shannon entropy generating partition $\alpha$
\[\lim_{n \rightarrow \infty} \sqrt[n]{\h_{\langle g \rangle}(X, \mu / \Sigma, \B{n}{S} \cdot \alpha)} = 2 r - 1. \qedhere\]
\end{cor}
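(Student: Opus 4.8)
The plan is to sandwich the sequence $\h_{\langle g \rangle}(X, \mu / \Sigma, \B{n}{S} \cdot \alpha)$ between two quantities whose $n$th roots both tend to $2r-1$, and then apply Corollary \ref{COR GROWTH}. For the lower bound I would start from inequality \eqref{EQ2} established in the proof of Theorem \ref{THM KOLM}, namely
$$\h_{\langle g \rangle} (X, \mu / \Sigma, \B{n}{S} \cdot \alpha) \geq \sH(t \B{n}{S} \cdot \alpha / \Pre(t) \B{n}{S} \cdot \alpha \vee \Sigma) - \sum_{u \in G \setminus \B{1}{S}} \delta(u, \B{n}{S} \cdot \alpha / \Sigma),$$
where $t$ is the left-most letter of $g$. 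By Lemma \ref{LEM SMALLSUM} the subtracted sum tends to $0$, and by Lemma \ref{LEM TECH} the first term is, for all but finitely many $n$ and any fixed $\epsilon > 0$, at least $\sH(\B{n-1}{S} \cdot \alpha / \B{n-2}{S} \cdot \alpha \vee \Sigma) - \epsilon$. Hence for large $n$,
$$\h_{\langle g \rangle}(X, \mu / \Sigma, \B{n}{S} \cdot \alpha) \geq \sH(\B{n-1}{S} \cdot \alpha / \B{n-2}{S} \cdot \alpha \vee \Sigma) - 2\epsilon.$$
Since under the hypotheses Corollary \ref{COR GROWTH} gives $\sqrt[n]{\sH(\B{n+1}{S} \cdot \alpha / \B{n}{S} \cdot \alpha \vee \Sigma)} \to 2r - 1$, and this quantity tends to $+\infty$, taking $n$th roots of the displayed lower bound and shifting the index by a constant yields $\liminf_n \sqrt[n]{\h_{\langle g \rangle}(X, \mu / \Sigma, \B{n}{S} \cdot \alpha)} \geq 2r - 1$.

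For the upper bound I would use the general fact, immediate from the definition of relative Kolmogorov--Sinai entropy together with Lemma \ref{LEM SHAN}, that for any finite Shannon entropy partition $\beta$ one has $\h_{\langle g \rangle}(X, \mu / \Sigma, \beta) \leq \sH(\beta / \Sigma) \leq \sH(\beta)$. Applying this with $\beta = \B{n}{S} \cdot \alpha$ gives $\h_{\langle g \rangle}(X, \mu / \Sigma, \B{n}{S} \cdot \alpha) \leq \sH(\B{n}{S} \cdot \alpha)$, and subadditivity of Shannon entropy (Lemma \ref{LEM SHAN}(ii) iterated) bounds $\sH(\B{n}{S} \cdot \alpha) \leq |\B{n}{S}| \cdot \sH(\alpha)$. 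Since $|\B{n}{S}|$ grows like $(2r-1)^n$ up to a polynomial factor, we get $\limsup_n \sqrt[n]{\h_{\langle g \rangle}(X, \mu / \Sigma, \B{n}{S} \cdot \alpha)} \leq 2r - 1$. Combining with the lower bound, and noting the sequence is eventually positive (in fact tends to $+\infty$ by Theorem \ref{THM KOLM} and its proof), the limit exists and equals $2r - 1$.

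The routine parts here are the two crude bounds; the only real content is packaged in Corollary \ref{COR GROWTH} and in Lemmas \ref{LEM SMALLSUM} and \ref{LEM TECH}, which have already been proved. The main point requiring a little care is to make sure the $n$th-root asymptotics survive the index shift (from $n-1$ to $n$) and the additive $-2\epsilon$ correction in the lower bound — but since the quantity being bounded diverges to infinity, an additive constant and a bounded shift in the index do not affect the exponential growth rate, so letting $\epsilon \to 0$ closes the argument. I do not anticipate any genuine obstacle beyond bookkeeping.
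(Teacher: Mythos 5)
Your proposal is correct and follows essentially the same route as the paper: the paper's proof is precisely the sandwich $\sH(\B{n-1}{S} \cdot \alpha / \B{n-2}{S} \cdot \alpha \vee \Sigma) - 2\epsilon \leq \h_{\langle g \rangle}(X, \mu / \Sigma, \B{n}{S} \cdot \alpha) \leq |\B{n}{S}| \cdot \sH(\alpha / \Sigma)$, with the lower bound taken from the final inequality in the proof of Theorem \ref{THM KOLM} (i.e.\ Equation \ref{EQ2} plus Lemmas \ref{LEM SMALLSUM} and \ref{LEM TECH}) and the limit computed via Corollary \ref{COR GROWTH}, exactly as you do. Your only deviation is dropping the conditioning on $\Sigma$ in the crude upper bound, which is immaterial.
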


\begin{proof}
By the final inequality of the previous proof, we have that for any $\epsilon > 0$ and all but finitely many $n \in \N$
$$\sH(\B{n-1}{S} \cdot \alpha / \B{n-2}{S} \cdot \alpha \vee \Sigma) - 2 \epsilon \leq \h_{\langle g \rangle} (X, \mu / \Sigma, \B{n}{S} \cdot \alpha) \leq \sH(\B{n}{S} \cdot \alpha / \Sigma) \leq |\B{n}{S}| \cdot \sH(\alpha / \Sigma).$$
By taking $n^\text{th}$ roots of the expressions above and taking the limit as $n$ tends to infinity we obtain (by Corollary \ref{COR GROWTH})
\[2r - 1 \leq \lim_{n \rightarrow \infty} \sqrt[n]{\h_{\langle g \rangle}(X, \mu / \Sigma, \B{n}{S} \cdot \alpha)} \leq 2r - 1. \qedhere\]
\end{proof}

The Corollary below answers a question of Bowen stated in \cite{B10d}. In \cite{B10d} Bowen proved the result below under the additional assumption that $\Sigma = G \cdot \beta$ where $\beta$ has finite Shannon entropy.

\begin{cor} \label{COR RFORMULA}
Let $G$ have rank $r$ and let $G$ act on a probability space $(X, \mu)$ and let $\Sigma$ be a $G$-invariant sub-$\sigma$-algebra. Assume there is a finite Shannon entropy generating partition $\alpha$ for $G \acts (X, \mu)$. Then
$$f_G(X, \mu / \Sigma) = \lim_{n \rightarrow \infty} (1 - r) \sH(\B{n}{S} \cdot \alpha / \Sigma) + \sum_{s \in S} \h_{\langle s \rangle}(X, \mu / \Sigma, \B{n}{S} \cdot \alpha).$$
\end{cor}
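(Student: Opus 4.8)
The plan is to start from the simplified formula for relative f-invariant entropy provided by Lemma~\ref{FINV SPHERE}, namely that
\[
f_G(X, \mu / \Sigma) = \lim_{n \to \infty} (1 - r) \cdot \sH(\B{n}{S} \cdot \alpha / \Sigma) + \frac{1}{2} \cdot \sH(\B{n+1}{S} \cdot \alpha / \B{n}{S} \cdot \alpha \vee \Sigma),
\]
and to show that the displayed expression in the corollary has the same limit. Comparing the two formulas, it suffices to prove that
\[
\lim_{n \to \infty} \left( \sum_{s \in S} \h_{\langle s \rangle}(X, \mu / \Sigma, \B{n}{S} \cdot \alpha) - \frac{1}{2} \cdot \sH(\B{n+1}{S} \cdot \alpha / \B{n}{S} \cdot \alpha \vee \Sigma) \right) = 0,
\]
with the understanding that when $f_G(X,\mu/\Sigma) = -\infty$ both sides should be seen to diverge to $-\infty$ in a matching way. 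So the first step is to split into the two cases $f_G(X,\mu/\Sigma) \neq -\infty$ and $f_G(X,\mu/\Sigma) = -\infty$, and I expect the bulk of the work to be in the finite case. There is also the degenerate case $r \le 1$ (i.e. $G$ cyclic or trivial) which should be handled separately by a direct computation — when $r = 1$, $\langle s\rangle = G$ and the formula reduces to the definition of relative Kolmogorov--Sinai entropy via the generating partition, and when $r=0$ both sides are $\sH(\alpha/\Sigma)$.

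For the finite case, the key is to get two-sided control on $\h_{\langle s \rangle}(X, \mu / \Sigma, \B{n}{S} \cdot \alpha)$. The lower bound is exactly what was extracted in the proof of Theorem~\ref{THM KOLM}: from Equation~\eqref{EQ2},
\[
\h_{\langle s \rangle}(X, \mu / \Sigma, \B{n}{S} \cdot \alpha) \geq \sH(s \B{n}{S} \cdot \alpha / \Pre(s) \B{n}{S} \cdot \alpha \vee \Sigma) - \sum_{u \in G \setminus \B{1}{S}} \delta(u, \B{n}{S} \cdot \alpha / \Sigma),
\]
taking $s$ itself as the path start $p_0 = t$. For the upper bound, I would argue that $\bigvee_{m=0}^{k-1} g^m \B{n}{S} \cdot \alpha$, while coarser than $\Pre(g^k)\B{n}{S}\cdot\alpha$, is in the appropriate limit essentially recovering the whole conditioning on $\Pre(s\B{n}{S})\cdot\alpha$; more carefully, one gets $\h_{\langle s\rangle}(X,\mu/\Sigma,\B{n}{S}\cdot\alpha) \le \sH(s\B{n}{S}\cdot\alpha / \Pre(s)\B{n}{S}\cdot\alpha \vee \Sigma)$ by monotonicity of Shannon entropy together with the fact that the partitions $s^{-m}\B{n}{S}\cdot\alpha$ for $m \ge 1$, translated, together generate (up to boundary terms) at least as fine a $\sigma$-algebra as $\Pre(s)\B{n}{S}\cdot\alpha$ when restricted to the relevant window — this upper bound argument is the step I expect to require the most care, since one must check that no relevant elements of $\Pre(s)\B{n}{S}$ are missed. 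Combining, $\h_{\langle s\rangle}(X,\mu/\Sigma,\B{n}{S}\cdot\alpha)$ is squeezed between $\sH(s\B{n}{S}\cdot\alpha/\Pre(s)\B{n}{S}\cdot\alpha\vee\Sigma)$ minus a quantity that tends to $0$ by Lemma~\ref{LEM SMALLSUM}, and $\sH(s\B{n}{S}\cdot\alpha/\Pre(s)\B{n}{S}\cdot\alpha\vee\Sigma)$ itself.

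Next I would sum over $s \in S$ and relate $\sum_{s \in S} \sH(s\B{n}{S}\cdot\alpha / \Pre(s)\B{n}{S}\cdot\alpha\vee\Sigma)$ to $\frac12 \sH(\B{n+1}{S}\cdot\alpha / \B{n}{S}\cdot\alpha\vee\Sigma)$. Since for each $s \in S\cup S^{-1}$ we have $s\B{n}{S} \setminus \B{n}{S} = s C_s$ (in the notation of Lemma~\ref{LEM SPINEQ}), and each element of $\Sp{n+1}{S}$ lies in exactly $2r-1$ of the $C_s$, a chain-rule expansion plus an argument like the one in Lemma~\ref{LEM TECH} shows $\sum_{s \in S \cup S^{-1}} \sH(s\B{n}{S}\cdot\alpha/\Pre(s)\B{n}{S}\cdot\alpha\vee\Sigma)$ differs from $\sH(\B{n+1}{S}\cdot\alpha/\B{n}{S}\cdot\alpha\vee\Sigma)$ by a sum of $\delta$-terms $\delta(u, \B{n}{S}\cdot\alpha/\Sigma)$ over $u \in G \setminus \B{1}{S}$, which again vanishes in the limit by Lemma~\ref{LEM SMALLSUM}; using the measure-preserving symmetry $\sH(s\,\cdot) = \sH(s^{-1}\,\cdot)$-type identities to pass from $S \cup S^{-1}$ to $S$ introduces the factor $\frac12$. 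Putting all of this together gives the claimed identity. For the case $f_G(X,\mu/\Sigma) = -\infty$, I would observe that the same lower/upper bounds on $\h_{\langle s\rangle}$ in terms of $\sH(s\B{n}{S}\cdot\alpha/\Pre(s)\B{n}{S}\cdot\alpha\vee\Sigma)$ still hold (Lemma~\ref{LEM SMALLSUM} is the only place finiteness was used for the error terms, so here I would instead bound the error terms crudely by the telescoping in Lemma~\ref{LEM PATH} or just note both sides of the corollary's displayed formula are $-\infty$ because $\sum_{s\in S}\h_{\langle s\rangle}$ grows slower than $(r-1)\sH(\B{n}{S}\cdot\alpha/\Sigma)$ decreases), so that the limit on the right-hand side is $-\infty = f_G(X,\mu/\Sigma)$. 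The main obstacle, as noted, is the clean upper bound $\h_{\langle s\rangle}(X,\mu/\Sigma,\B{n}{S}\cdot\alpha) \le \sH(s\B{n}{S}\cdot\alpha/\Pre(s)\B{n}{S}\cdot\alpha\vee\Sigma) + o(1)$ and the bookkeeping needed to identify exactly which group elements' $\delta$-contributions appear as error terms.
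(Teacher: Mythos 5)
Your lower bound for $\h_{\langle s \rangle}$ is exactly the one the paper uses (Equation \ref{EQ2} with $g = s$, plus Lemma \ref{LEM SMALLSUM}), but the step you flag as needing the most care contains a genuine gap: the claimed upper bound $\h_{\langle s \rangle}(X, \mu / \Sigma, \B{n}{S} \cdot \alpha) \le \sH(s \B{n}{S} \cdot \alpha / \Pre(s) \B{n}{S} \cdot \alpha \vee \Sigma)$ does not follow from monotonicity, and the generation claim you offer for it is false. The conditioning $\sigma$-algebra in the definition of $\h_{\langle s \rangle}$, namely $\bigvee_{m \ge 1} s^{-m} \B{n}{S} \cdot \alpha \vee \Sigma$, does not refine $\Pre(s) \B{n}{S} \cdot \alpha \vee \Sigma$ (or any translate of it): if $t \in \Pre(s)$ is a generator with $t \ne s^{\pm 1}$, then $t^{n+1} \in \Pre(s) \B{n}{S}$ while $t^{n+1} \notin \bigcup_{m \ge 1} s^{-m} \B{n}{S}$, so the $\langle s \rangle$-past carries no information about $t^{n+1} \cdot \alpha$. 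Since $1_G \in \Pre(s)$, conditioning on $\Pre(s) \B{n}{S} \cdot \alpha$ is \emph{finer} than conditioning on $\B{n}{S} \cdot \alpha$, so your inequality is strictly stronger than the trivial bound $\h_{\langle s \rangle}(X, \mu / \Sigma, \B{n}{S} \cdot \alpha) \le \sH(s \B{n}{S} \cdot \alpha / \B{n}{S} \cdot \alpha \vee \Sigma)$ and would itself require a $\delta$-error argument using $f_G(X,\mu/\Sigma) \ne -\infty$; it is not a soft fact. A second, related problem is your passage from $S \cup S^{-1}$ to $S$ with the factor $\frac{1}{2}$: the measure-preserving symmetry gives $\sH(s\beta/\beta\vee\Sigma) = \sH(s^{-1}\beta/\beta\vee\Sigma)$, but the $\Pre$-conditioned quantities for $s$ and $s^{-1}$ are conditioned on different sets and are not equal, so this step too would need its own $\delta$-bookkeeping.

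All of this can be avoided, and the paper's proof does so with two observations you are missing. First, there is no need to go through Lemma \ref{FINV SPHERE} and $\frac{1}{2}\sH(\B{n+1}{S} \cdot \alpha / \B{n}{S} \cdot \alpha \vee \Sigma)$ at all: by Lemma \ref{LEM SHAN}, $F_G(X, \mu/\Sigma, S, \B{n}{S} \cdot \alpha) = (1-r)\sH(\B{n}{S}\cdot\alpha/\Sigma) + \sum_{s \in S} \sH(s\B{n}{S}\cdot\alpha/\B{n}{S}\cdot\alpha\vee\Sigma)$, and this converges to $f_G(X,\mu/\Sigma)$ by definition; the trivial bound $\h_{\langle s \rangle}(X,\mu/\Sigma,\B{n}{S}\cdot\alpha) \le \sH(s\B{n}{S}\cdot\alpha/\B{n}{S}\cdot\alpha\vee\Sigma)$ then gives the inequality ``$\ge$'' immediately, including equality in the case $f_G(X,\mu/\Sigma) = -\infty$. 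Second, for the reverse inequality the ordering of $S \cup S^{-1}$ may be chosen \emph{separately for each} $s \in S$ so that $s$ is least; then $\Pre(s) = \{1_G\}$, Equation \ref{EQ2} (which, as the paper notes, was derived without the non-atomicity hypothesis of Theorem \ref{THM KOLM}) reads $\h_{\langle s \rangle}(X,\mu/\Sigma,\B{n}{S}\cdot\alpha) \ge \sH(s\B{n}{S}\cdot\alpha/\B{n}{S}\cdot\alpha\vee\Sigma) - \sum_{u \in G \setminus \B{1}{S}} \delta(u, \B{n}{S}\cdot\alpha/\Sigma)$, and Lemma \ref{LEM SMALLSUM} makes the error term at most $\epsilon$ for all but finitely many $n$. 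Summing over $s \in S$ yields the corollary with no comparison between $S$ and $S \cup S^{-1}$ and no analysis of $\Pre(s)$-conditioned entropies. Your plan could be repaired along these lines, but as written the upper-bound step is unjustified.
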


\begin{proof}
Note that by Lemma \ref{LEM SHAN} we have
$$f_G(X, \mu / \Sigma) = \lim_{n \rightarrow \infty} (1 - r) \sH(\B{n}{S} \cdot \alpha / \Sigma) + \sum_{s \in S} \sH(s \B{n}{S} \cdot \alpha / \B{n}{S} \cdot \alpha \vee \Sigma).$$
Fix $s \in S$. From the definition of Kolmogorov--Sinai entropy we obtain the inequality
$$\h_{\langle s \rangle}(X, \mu / \Sigma, \B{n}{S} \cdot \alpha) \leq \sH(s \B{n}{S} \cdot \alpha / \B{n}{S} \cdot \alpha \vee \Sigma)$$
for every $n \in \N$. Therefore
$$f_G(X, \mu / \Sigma) \geq \lim_{n \rightarrow \infty} (1 - r) \sH(\B{n}{S} \cdot \alpha / \Sigma) + \sum_{s \in S} \h_{\langle s \rangle}(X, \mu / \Sigma, \B{n}{S} \cdot \alpha).$$
In particular, when $f_G(X, \mu / \Sigma) = -\infty$ we have equality above. So now assume that $f_G(X, \mu / \Sigma) \neq -\infty$. Order $S \cup S^{-1}$ so that $s$ is least, and let $\preceq$ be the induced well-ordering of $G$ (see the paragraph preceding Theorem \ref{INTRO DISS}). So $\Pre(s) = \{1_G\}$. We will use Equation \ref{EQ2} from the proof of Theorem \ref{THM KOLM}. Notice that we obtained Equation \ref{EQ2} without using the assumption from Theorem \ref{THM KOLM} that some measures $\mu_y$ are not purely atomic. So this equation holds in our current setting. Recall that in that equation $t$ was the left-most letter in the reduced $S$-word representation of $g$. Using $g = s$ we have $t = s$. Let $\epsilon > 0$. By applying Lemma \ref{LEM SMALLSUM} and using Equation \ref{EQ2} with $g = s$ we obtain
$$\sH(s \B{n}{S} \cdot \alpha / \B{n}{S} \cdot \alpha \vee \Sigma) - \epsilon \leq \h_{\langle s \rangle}(X, \mu / \Sigma, \B{n}{S} \cdot \alpha)$$
for all but finitely many $n \in \N$. This holds for every $s \in S$, so we deduce
\[f_G(X, \mu / \Sigma) - r \cdot \epsilon \leq \lim_{n \rightarrow \infty} (1 - r) \sH(\B{n}{S} \cdot \alpha / \Sigma) + \sum_{s \in S} \h_{\langle s \rangle}(X, \mu / \Sigma, \B{n}{S} \cdot \alpha). \qedhere\]
\end{proof}

\section{Ergodic Decompositions} \label{SEC ERGDEC}

For the remainder of the paper we work with non-relative f-invariant entropy. In this section we relate the f-invariant entropy of an invariant measure to the f-invariant entropy of the ergodic components of the invariant measure. We first show that f-invariant entropy is defined on the ergodic components, however we will find it necessary to work in a slightly more general setting.

\begin{defn}
Let $G$ act on a probability space $(X, \mu)$. A \emph{decomposition} of $\mu$ is a Borel probability measure $\tau$ on $\M(X)$ such that $\mu = \int_{m \in \M(X)} m d \tau$. A decomposition is \emph{countable} if it is purely atomic. We say that $\tau$ is a decomposition of $\mu$ into \emph{mutually singular measures} if $\tau \times \tau$-almost every pair of measures $(m, \lambda) \in \M(X) \times \M(X)$ are either identical ($m = \lambda$) or  are mutually singular, meaning there is a Borel set $B \subseteq X$ with $m(B) = \lambda(X \setminus B) = 1$.
\end{defn}

It is well known that distinct ergodic measures are mutually singular. So ergodic decompositions are decompositions into mutually singular measures.

\begin{lem} \label{LEM GENERATOR}
Let $G$ act on a probability space $(X, \mu)$. Let $\tau$ be a decomposition of $\mu$ into mutually singular measures. If $\alpha$ is a countable measurable partition of $X$ then:
\begin{enumerate}
\item[\rm (i)] if $\alpha$ is generating for $G \acts (X, \mu)$ then $\alpha$ is generating for $G \acts (X, \nu)$ for $\tau$-almost every $\nu \in \M(X)$;
\item[\rm (ii)] if $\sH_\mu(\alpha) < \infty$ then $\sH_\nu(\alpha) < \infty$ for $\tau$-almost every $\nu \in \M(X)$. Furthermore $\int \sH_\nu(\alpha) d \tau \leq \sH_\mu(\alpha)$.
\end{enumerate}
\end{lem}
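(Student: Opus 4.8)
The plan is to exhibit a countable measurable generating partition $\beta$ for $G \acts (X, \mu)$ that is simultaneously ``well-behaved'' on almost every fiber measure $\nu$, and then transfer the two properties one at a time. The conceptual point is that a decomposition into mutually singular measures behaves, from the point of view of a fixed partition and the $\sigma$-algebra it generates, exactly like a disintegration over a factor: there is a $G$-invariant sub-$\sigma$-algebra $\Sigma \subseteq \mathcal{B}(X)$ (generated by the Borel map $x \mapsto \nu_x$ assigning to $\mu$-almost every $x$ the unique component containing it; this is well-defined precisely because the components are mutually singular) such that $\mu = \int \nu \, d\tau$ is the disintegration of $\mu$ over the factor $(\M(X), \tau)$ induced by $\Sigma$. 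So I would first record this identification, citing the standard fact that mutually singular measures can be separated by a Borel set and that countably many such sets generate the required $\Sigma$.

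For part (ii): take $\alpha$ with $\sH_\mu(\alpha) < \infty$. Since $\mu = \int \nu \, d\tau$, the concavity of the function $t \mapsto -t\log t$ (equivalently, $\sH_{\int \nu \, d\tau}(\alpha) \geq \int \sH_\nu(\alpha)\, d\tau$ by Jensen applied to each atom $A \in \alpha$, using $\mu(A) = \int \nu(A)\, d\tau$) gives
$$\int \sH_\nu(\alpha)\, d\tau \leq \sH_\mu(\alpha) < \infty.$$
Hence the nonnegative function $\nu \mapsto \sH_\nu(\alpha)$ is $\tau$-integrable, so it is finite for $\tau$-almost every $\nu$, which is exactly the assertion. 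This is the routine half.

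For part (i): assume $\alpha$ is generating for $G \acts (X, \mu)$, i.e.\ $G \cdot \alpha$ equals $\mathcal{B}(X)$ modulo $\mu$-null sets. I want this to pass to $\tau$-almost every $\nu$. Fix a countable family $\{D_k\}$ of Borel subsets of $X$ generating $\mathcal{B}(X)$. For each $k$, being generating for $\mu$ gives $D_k' \in G \cdot \alpha$ with $\mu(D_k \symd D_k') = 0$; since $\mu = \int \nu\, d\tau$ and the integrand $\nu(D_k \symd D_k')$ is nonnegative, it vanishes for $\tau$-almost every $\nu$. Intersecting over the countably many $k$, for $\tau$-almost every $\nu$ we have $\nu(D_k \symd D_k') = 0$ for all $k$ simultaneously, so $G \cdot \alpha$ contains every $D_k$ modulo $\nu$-null sets, and therefore contains $\mathcal{B}(X)$ modulo $\nu$-null sets; that is, $\alpha$ is generating for $G \acts (X, \nu)$. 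Note this step never uses mutual singularity — it works for any decomposition — and it is again essentially routine.

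The only subtlety — and the place I'd be most careful — is making sure the ``for $\tau$-almost every $\nu$'' exceptional sets can be collected into a single $\tau$-null set; this is fine because all the exceptional conditions are indexed by a countable set ($k \in \N$, the atoms of $\alpha$, etc.), so a countable union of $\tau$-null sets suffices. Mutual singularity of $\tau$-a.e.\ pair is actually not needed for either conclusion as stated; it is presumably included in the hypothesis because it is the relevant setting (ergodic decompositions) and because it will be used in subsequent lemmas to control entropy additively. I would not belabor it here beyond perhaps a remark. Thus the main obstacle is not mathematical depth but bookkeeping: phrasing the Jensen/concavity inequality and the countable-generation argument cleanly enough that both ``almost every'' statements are manifestly justified.
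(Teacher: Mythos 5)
Your proposal is correct and follows essentially the same route as the paper: part (ii) via Jensen's inequality applied to $t \mapsto t\log t$ on each class $A \in \alpha$ using $\mu(A) = \int \nu(A)\, d\tau$, and part (i) by fixing a countable generating family of Borel sets, pairing each with a set in $G \cdot \alpha$ up to a $\mu$-null set, and collecting the countably many exceptional $\tau$-null sets. The only step you leave implicit --- that once $G \cdot \alpha$ captures the countable generating family mod $\nu$-null sets it captures all of $\mathcal{B}(X)$ --- is exactly the paper's observation that the sets so approximable form a $\sigma$-algebra, and your remark that mutual singularity is never used matches the paper's proof as well.
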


\begin{proof}
(i). Since $X$ is by assumption a standard Borel space, there is a countable collection of Borel sets $\C$ such that the $\sigma$-algebra generated by $\C$ is precisely the collection of all Borel subsets of $X$. Since $\alpha$ is generating for $G \acts (X, \mu)$, we have that for every $C \in \C$ there is a paired set $p(C) \in G \cdot \alpha$ with $\mu(C \symd p(C)) = 0$. We have
$$0 = \mu(C \symd p(C)) = \int_{\nu \in \M(X)} \nu(C \symd p(C)) d \tau.$$
So $\nu(C \symd p(C)) = 0$ for $\tau$-almost every $\nu \in \M(X)$. Since $\C$ is countable, we can find a single set $M \subseteq \M(X)$ with $\tau(M) = 1$ for which $\nu(C \symd p(C)) = 0$ for every $\nu \in M$ and every $C \in \C$. Now let $\nu \in M$ and let $\mathcal{F}$ be the collection of Borel sets $B \subseteq X$ for which there is $B' \in G \cdot \alpha$ with $\nu(B \symd B') = 0$. It is easily verified that $\mathcal{F}$ is a $\sigma$-algebra, and by definition of $M$ we have $\mathcal{F}$ contains $\C$. Therefore $\mathcal{F}$ is the collection of Borel sets and $\alpha$ is a generating partition for $G \acts (X, \nu)$.

(ii). From the definition of the Borel structure on $\M(X)$ (see Section \ref{SEC PRE}), it is apparent that the functions $\nu \mapsto -\nu(A) \log(\nu(A))$ are Borel for $A \subseteq X$ Borel. Therefore $\nu \mapsto \sH_\nu(\alpha)$ is a Borel function on $\M(X)$. So the integral $\int \sH_\nu(\alpha) d \tau$ is defined and exists since $\sH_\nu(\alpha) \geq 0$ for all $\nu$. By the Monotone Convergence Theorem we have
$$\int_{\nu \in \M(X)} \sH_\nu(\alpha) d \tau = \int_{\nu \in \M(X)} \sum_{A \in \alpha} -\nu(A) \cdot \log(\nu(A)) d \tau$$
$$= \sum_{A \in \alpha} \int_{\nu \in \M(X)} -\nu(A) \cdot \log(\nu(A)) d \tau.$$
Define $\phi: [0, 1] \rightarrow \R$ by $\phi(0) = 0$ and $\phi(t) = t \cdot \log(t)$ for $0 < t \leq 1$. One can easily check that the function $\phi$ is convex. Since the function $\nu \mapsto \nu(A)$ clearly lies in $\mathcal{L}^1(\M(X), \tau)$, we can apply Jensen's Inequality to obtain
$$\int_{\nu \in \M(X)} \sH_\nu(\alpha) d \tau = \sum_{A \in \alpha} \int_{\nu \in \M(X)} - \phi(\nu(A)) d \tau$$
$$\leq \sum_{A \in \alpha} - \phi \left( \int_{\nu \in \M(X)} \nu(A) d \tau \right) = \sum_{A \in \alpha} - \phi(\mu(A)) = \sH_\mu(\alpha).$$
We conclude that if $\sH_\mu(\alpha) < \infty$ then $\sH_\nu(\alpha) < \infty$ for $\tau$-almost every $\nu \in \M(X)$.
\end{proof}

\begin{thm} \label{THM ERGDEC}
Let $G$ have rank $r$ and let $G$ act on a probability space $(X, \mu)$. Assume that $f_G(X, \mu)$ is defined. If $\tau$ is the ergodic decomposition of $\mu$, then $f_G(X, \nu)$ is defined for $\tau$-almost every $\nu \in \E(X)$ and
$$f_G(X, \mu) = \int_{\nu \in \E(X)} f_G(X, \nu) d \tau - (r - 1) \cdot \sH(\tau).$$
\end{thm}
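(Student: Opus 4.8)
The plan is to reduce everything to the formula $f_G(X,\mu) = \lim_n F_G(X,\mu,S,\B{n}{S}\cdot\alpha)$, to fix a single generating partition $\alpha$ with $\sH_\mu(\alpha)<\infty$, and to compare the $n$-th term of this limit for $\mu$ with the integral over $\nu$ of the corresponding term for $\nu$. By Lemma \ref{LEM GENERATOR}, $\alpha$ is a finite-Shannon-entropy generating partition for $G \acts (X,\nu)$ for $\tau$-almost every $\nu$, so $f_G(X,\nu)$ is computed by the same recipe with the same $\alpha$; this already gives the ``defined'' half of the statement once we know the relevant integrals are finite. The key identity to establish is that for each fixed finite $\beta$ (we will apply it to $\beta = \B{n}{S}\cdot\alpha$),
\begin{equation} \label{EQ PROPOSAL}
F_G(X,\mu,S,\beta) = \int_{\nu\in\E(X)} F_G(X,\nu,S,\beta)\, d\tau - (r-1)\cdot\Big(\sH_\mu(\beta) - \int \sH_\nu(\beta)\,d\tau\Big) + \text{(error involving } \tau\text{)}.
\end{equation}
More precisely, since $F_G(X,\mu,S,\beta) = (1-2r)\sH_\mu(\beta) + \sum_{s\in S}\sH_\mu(s\cdot\beta\vee\beta)$ is an affine combination of Shannon entropies, the whole problem comes down to understanding, for a fixed countable partition $\gamma$, the difference $\sH_\mu(\gamma) - \int \sH_\nu(\gamma)\,d\tau$. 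The ergodic decomposition $\tau$ induces a factor map $(X,\mu)\to(\E(X),\tau)$ whose fibers carry exactly the measures $\nu$, so by the definition of conditional Shannon entropy relative to a sub-$\sigma$-algebra (recalled in Section \ref{SEC PRE}), $\int \sH_\nu(\gamma)\,d\tau = \sH_\mu(\gamma\,/\,\Sigma_\tau)$, where $\Sigma_\tau$ is the (pullback of the) Borel $\sigma$-algebra on $\E(X)$. Hence $\sH_\mu(\gamma) - \int\sH_\nu(\gamma)\,d\tau = \sH_\mu(\gamma) - \sH_\mu(\gamma\,/\,\Sigma_\tau)$, and one must show this equals $\sH_\mu(\gamma\vee\Sigma_\tau) - \sH_\mu(\Sigma_\tau)$-type quantity — in fact it is the "information the ergodic component provides about $\gamma$", and crucially it does \emph{not} depend on $\gamma$ beyond a bounded amount; we will see it contributes the $\sH(\tau)$ term.

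The cleanest route is to recognize that $\Sigma_\tau$ is a $G$-invariant sub-$\sigma$-algebra on which $G$ acts trivially (the ergodic decomposition is $G$-invariant), so the induced factor action $G\acts(\E(X),\tau)$ has $G$ acting trivially, hence $\sH(\tau)<\infty$ forces $\tau$ purely atomic and Lemma \ref{LEM FINACT} gives $f_G(\E(X),\tau) = -(r-1)\sH(\tau)$; and $\sH(\tau)=\infty$ is the case where we must show $f_G(X,\mu)=-\infty$ is impossible (i.e. $f_G$ being defined and finite forces $\sH(\tau)<\infty$ — but actually the theorem as stated only assumes $f_G$ defined, and permits $f_G(X,\mu)=-\infty$, in which case both sides should be $-\infty$; this edge case needs separate handling and is where I expect to be careful). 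Assuming $\sH(\tau)<\infty$: apply Bowen's relative f-invariant entropy identity from the end of Section \ref{SEC PRE}, namely $f_G(X,\mu\,/\,\Sigma_\tau) = f_G(X,\mu) - f_G(\E(X),\tau) = f_G(X,\mu) + (r-1)\sH(\tau)$. So it remains to prove $f_G(X,\mu\,/\,\Sigma_\tau) = \int f_G(X,\nu)\,d\tau$. This is exactly the statement that relative f-invariant entropy over the ergodic-decomposition $\sigma$-algebra equals the average of the absolute f-invariant entropies of the fibers. Expand both sides via $F_G$ and $F'_G$: by definition $f_G(X,\mu\,/\,\Sigma_\tau) = \lim_n F_G(X,\mu\,/\,\Sigma_\tau,S,\B{n}{S}\cdot\alpha)$, and each relative Shannon entropy $\sH_\mu(\gamma\,/\,\Sigma_\tau) = \int \sH_\nu(\gamma)\,d\tau$; so $F_G(X,\mu\,/\,\Sigma_\tau,S,\B{n}{S}\cdot\alpha) = \int F_G(X,\nu,S,\B{n}{S}\cdot\alpha)\,d\tau$ term by term, because $F_G$ is affine in the Shannon entropies and integration against $\tau$ commutes with that affine combination. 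Then pass to the limit in $n$: the integrand $F_G(X,\nu,S,\B{n}{S}\cdot\alpha)$ is nonincreasing in $n$ (Bowen's monotonicity), bounded above by $\sH_\nu(\alpha)$ which is $\tau$-integrable by Lemma \ref{LEM GENERATOR}(ii), so by the Monotone Convergence Theorem (applied to $\sH_\nu(\alpha) - F_G(X,\nu,S,\B{n}{S}\cdot\alpha)\geq 0$, increasing in $n$) the limit and integral interchange, yielding $\lim_n \int F_G(X,\nu,\ldots)\,d\tau = \int f_G(X,\nu)\,d\tau$ (with value possibly $-\infty$). This simultaneously shows $f_G(X,\nu)$ is defined $\tau$-a.e. (it is a limit of a nonincreasing sequence of finite quantities) and gives the formula.

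The main obstacle I anticipate is the interchange-of-limit-and-integral step together with the bookkeeping when $\sH(\tau)=\infty$ or when the integral $\int f_G(X,\nu)\,d\tau$ diverges to $-\infty$: one must be sure the dominated/monotone convergence hypotheses genuinely hold (the upper bound $F_G(X,\nu,S,\B{n}{S}\cdot\alpha)\leq \sH_\nu(\alpha)$ is the right envelope, and $\int\sH_\nu(\alpha)\,d\tau\leq\sH_\mu(\alpha)<\infty$ is exactly Lemma \ref{LEM GENERATOR}(ii)), and one must check that the equation still reads correctly as ``$-\infty = -\infty$'' in the degenerate cases. A secondary point requiring care is justifying $\sH_\mu(\gamma\,/\,\Sigma_\tau) = \int\sH_\nu(\gamma)\,d\tau$ — this is immediate from the definition of conditional entropy relative to a $\sigma$-algebra given in Section \ref{SEC PRE} together with the fact that the disintegration of $\mu$ over the ergodic-decomposition factor is precisely $\{\nu\}$, but it should be stated explicitly. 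Everything else is the affine/telescoping algebra already packaged in Bowen's results and in Lemmas \ref{LEM FINACT} and \ref{LEM GENERATOR}.
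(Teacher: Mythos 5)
Your main case is handled by a genuinely different route than the paper's, and that part is essentially sound: the paper never invokes Bowen's relative additivity $f_G(X,\mu/\Sigma)=f_G(X,\mu)-f_G(Y,\nu)$; instead it proves directly, for any \emph{countable} decomposition into mutually singular measures with $\sH(\tau)<\infty$, the identity $F_G(X,\mu,S,\beta)=(1-r)\sH(\tau)+\sum_i p_i\,F_G(X,\nu_i,S,\beta)$ by introducing a partition $\xi$ separating the components and working with partitions refining $\xi$, and then lets $\beta=\B{n}{S}\cdot(\alpha\vee\xi)$ and passes to the limit. Your route --- identifying $\int\sH_\nu(\gamma)\,d\tau$ with $\sH_\mu(\gamma/\Sigma_\tau)$ via the disintegration over the invariant $\sigma$-algebra, interchanging limit and integral by monotone convergence with the envelope $\sH_\nu(\alpha)$ (integrable by Lemma \ref{LEM GENERATOR}(ii)), and then applying the relative additivity together with Lemma \ref{LEM FINACT} to the trivial factor action on $(\E(X),\tau)$ --- works when $\sH(\tau)<\infty$, provided you say explicitly that $\sH(\tau)<\infty$ forces $\tau$ to be purely atomic, so the atom partition is a finite-entropy generating partition for the trivial action and $f_G(\E(X),\tau)$ is defined; you also need a sentence on Borel measurability of $\nu\mapsto F_G(X,\nu,S,\B{n}{S}\cdot\alpha)$ (this is in the proof of Lemma \ref{LEM GENERATOR} and at the end of the paper's proof). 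What your approach buys is a cleaner conceptual statement (relative f-invariant entropy over the ergodic $\sigma$-algebra equals the average of the fiber entropies); what it costs is reliance on Bowen's relative machinery, which the paper avoids.

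The genuine gap is the case $\sH(\tau)=\infty$, which you flag but do not handle, and your stated expectation for it is partly wrong. When $\sH(\tau)=\infty$ the trivial action on $(\E(X),\tau)$ admits no finite-entropy generating partition, so $f_G(\E(X),\tau)$ is undefined and the additivity formula over $\Sigma_\tau$ is simply unavailable; your main mechanism therefore does not apply, and you still owe two claims: that (for $r>1$) $f_G(X,\mu)=-\infty$, and that $\nu\mapsto f_G(X,\nu)$ is Borel so the right-hand side is a well-defined $-\infty$. The paper settles this by applying its countable-decomposition identity to coarsenings of $\tau$: given finite partitions $\Lambda_n$ of $\M(X)$ with $\sH_\tau(\Lambda_n)\to\infty$, it averages $\nu$ over each cell to obtain a countable mutually singular decomposition $\zeta$ with $\sH(\zeta)=\sH_\tau(\Lambda_n)$ and deduces $f_G(X,\mu)\le\sH_\mu(\alpha)-(r-1)\sH_\tau(\Lambda_n)\to-\infty$, checking measurability of $\nu\mapsto f_G(X,\nu)$ separately. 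You could imitate this inside your framework (apply the relative additivity over the pullback of $\Lambda_n$, whose factor is finite with trivial $G$-action, together with a bound of the form $f_G(X,\mu/\Sigma)\le\sH(\alpha/\Sigma)$), but some such argument must actually be supplied. Finally, for $r=1$ your ``both sides are $-\infty$'' reading of this case is false: the term $(r-1)\sH(\tau)$ vanishes and both sides can be finite even when $\tau$ is continuous (e.g.\ a continuous mixture of Bernoulli shifts over $\Z$ with the coordinate partition as generator); the paper disposes of $r=1$ at the outset by citing the classical ergodic-decomposition formula for Kolmogorov--Sinai entropy \cite{W82}, and you need to do the same or restrict to $r>1$.
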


\begin{proof}
This is well known when $G = \Z$ (\cite[Theorem 8.4]{W82}). So we assume below that $r > 1$.

We first treat a special case. Let $\tau$ be a countable decomposition of $\mu$ (not necessarily the ergodic decomposition) into mutually singular measures with $\sH(\tau) < \infty$. Say the atoms of $\tau$ are $(\nu_i)_{i \in I}$ for some countable set $I$. Set $p_i = \tau(\{\nu_i\}) > 0$. Since the $\nu_i$'s are countable and mutually singular, we can find a measurable partition $\xi = \{X_1, X_2, \ldots\}$ of $X$ such that $\nu_i(X_i) = 1$ for each $i \in I$.

If $\beta$ is any countable measurable partition of $X$ then
$$\sH_\mu(\beta / \xi) = \sum_{i \in I} \sum_{B \in \beta} - \mu(X_i) \cdot \frac{\mu(X_i \cap B)}{\mu(X_i)} \cdot \log \left( \frac{\mu(X_i \cap B)}{\mu(X_i)} \right)$$
$$= \sum_{i \in I} \sum_{B \in \beta} - p_i \cdot \nu_i(B) \cdot \log(\nu_i(B)) = \sum_{i \in I} p_i \cdot \sH_{\nu_i}(\beta).$$
In particular, if $\beta$ refines $\xi$ we have
$$\sH_\mu(\beta) = \sH_\mu(\xi) + \sH_\mu(\beta / \xi) = \sH(\tau) + \sum_{i \in I} p_i \cdot \sH_{\nu_i}(\beta),$$
and therefore (assuming $\sH(\beta) < \infty$)
$$F_G(X, \mu, S, \beta) = (1 - 2 r) \sH_\mu(\beta) + \sum_{s \in S} \sH_\mu(s \beta \vee \beta)$$
$$= (1 - r) \sH(\tau) + \sum_{i \in I} p_i \cdot \left( (1 - 2r) \sH_{\nu_i}(\beta) + \sum_{s \in S} \sH_{\nu_i}(s \cdot \beta \vee \beta) \right)$$
$$= (1 - r) \sH(\tau) + \sum_{i \in I} p_i \cdot F_G(X, \nu_i, S, \beta).$$
Let $\alpha$ be a generating partition for $G \acts (X, \mu)$ with $\sH_\mu(\alpha) < \infty$. As $\alpha \vee \xi$ is generating and $\sH_\mu(\alpha \vee \xi) \leq \sH_\mu(\xi) + \sH_\mu(\alpha) = \sH(\tau) + \sH_\mu(\alpha) < \infty$, we can apply Lemma \ref{LEM GENERATOR} to obtain
$$f_G(X, \mu) = \lim_{n \rightarrow \infty} F_G(X, \mu, S, \B{n}{S} \cdot (\alpha \vee \xi))$$
$$= \lim_{n \rightarrow \infty} (1- r) \sH(\tau) + \sum_{i \in I} p_i \cdot F_G(X, \nu_i, S, \B{n}{S} \cdot (\alpha \vee \xi))$$
$$= (1 - r) \sH(\tau) + \sum_{i \in I} p_i \cdot f_G(X, \nu_i) = \int_{\nu \in \M(X)} f_G(X, \nu) d \tau - (r - 1) \sH(\tau).$$
This formula holds whenever $\tau$ is a countable decomposition of $\mu$ into mutually singular measures with $\sH(\tau) < \infty$.

Now let $\tau$ be the ergodic decomposition of $\mu$. If $\sH(\tau) < \infty$ then $\tau$ is countable and by the previous paragraph
$$f_G(X, \mu) = \int_{\nu \in \E(X)} f_G(X, \nu) d \tau - (r - 1) \sH(\tau).$$
Now suppose that $\sH(\tau) = \infty$. Then there is a sequence $(\Lambda_n)_{n \in \N}$ of finite measurable partitions of $\M(X)$ with $\sH_\tau(\Lambda_n)$ tending to infinity as $n$ tends to infinity. Fix $n \in \N$ and for $\lambda \in \Lambda_n$ define
$$m_\lambda = \frac{1}{\tau(\lambda)} \cdot \int_{\nu \in \lambda} \nu d \tau \in \M(X).$$
Let $\zeta$ be the probability measure on $\M(X)$ which has atoms $\{m_\lambda \: \lambda \in \Lambda_n\}$ and satisfies $\zeta(\{m_\lambda\}) = \tau(\lambda)$. Then $\zeta$ is a decomposition of $\mu$ into mutually singular measures and $\sH(\zeta) = \sH_\tau(\Lambda_n) < \infty$. By the paragraph above we have
$$f_G(X, \mu) = \int_{m \in \M(X)} f_G(X, m) d \zeta - (r - 1) \cdot \sH(\zeta) \leq \int_{m \in \M(X)} \sH_m(\alpha) d \zeta - (r - 1) \sH(\zeta)$$
$$\leq \sH_\mu(\alpha) - (r - 1) \sH(\zeta) = \sH_\mu(\alpha) - (r - 1) \cdot \sH_\tau(\Lambda_n).$$
Taking the limit as $n$ tends to infinity we obtain $f_G(X, \mu) = - \infty$. We also have
$$\int_{\nu \in \E(X)} f_G(X, \nu) d \tau - (r - 1) \sH(\tau) \leq \int_{\nu \in \E(X)} \sH_\nu(\alpha) d \tau - (r - 1) \sH(\tau)$$
$$\leq \sH_\mu(\alpha) - (r - 1) \sH(\tau) = - \infty,$$
provided the function $\nu \mapsto f_G(X, \nu)$ is Borel, so that the integral is defined (this is only a concern now because $\tau$ may not be countable). We will resolve this technicality in the next paragraph and thus we will have
$$f_G(X, \mu) = \int_{\nu \in \E(X)} f_G(X, \nu) d \tau - (r - 1) \sH(\tau)$$
in all cases.

As we saw in the proof of Lemma \ref{LEM GENERATOR}, the map $\nu \mapsto \sH_\nu(\beta)$ is Borel for every countable measurable partition $\beta$ of $X$. When $\sH_\nu(\beta) < \infty$ we have that $F_G(X, \nu, S, \beta)$ is defined. By Lemma \ref{LEM GENERATOR}, there is a Borel set $E \subseteq \E(X)$ such that $\tau(E) = 1$ and $\sH_\nu(\B{n}{S} \cdot \alpha) \leq |\B{n}{S}| \cdot \sH_\nu(\alpha) < \infty$ for all $\nu \in E$ and all $n \in \N$. It readily follows that $\nu \mapsto F_G(X, \nu, S, \B{n}{S} \cdot \alpha)$ is a Borel function on $E$. After taking limits, we find that $\nu \mapsto f_G(X, \nu)$ is a Borel function on $E$. This completes the proof.
\end{proof}

Since $\sH(\tau) < \infty$ implies that $\tau$ is purely atomic, the above proof demonstrates the following.

\begin{cor} \label{COR ERGDEC}
Let $G$ have rank $r > 1$ and let $G$ act on a probability space $(X, \mu)$. Assume that $f_G(X, \mu)$ is defined. If $f_G(X, \mu) \neq -\infty$ then the action only has countably many ergodic components.
\end{cor}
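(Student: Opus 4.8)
The plan is to deduce this immediately from the ergodic decomposition formula in Theorem \ref{THM ERGDEC}; indeed the corollary is essentially the contrapositive of the ``$\sH(\tau) = \infty$'' half of that theorem's proof. Let $\tau$ be the ergodic decomposition of $\mu$, and fix a generating partition $\alpha$ with $\sH_\mu(\alpha) < \infty$, which exists because $f_G(X, \mu)$ is defined. By Theorem \ref{THM ERGDEC}, $f_G(X, \nu)$ is defined for $\tau$-almost every $\nu \in \E(X)$ and
\[ f_G(X, \mu) = \int_{\nu \in \E(X)} f_G(X, \nu) \, d\tau - (r - 1) \cdot \sH(\tau). \]

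The main step is to show $\sH(\tau) < \infty$. First I would bound the integral from above: for $\tau$-almost every $\nu$ one has $f_G(X, \nu) \le \sH_\nu(\alpha)$ (the basic inequality $f_G \le \sH(\alpha)$ from \cite{B10a} applied to each ergodic component, noting by Lemma \ref{LEM GENERATOR}(i) that $\alpha$ remains generating for the restricted measure), and by Lemma \ref{LEM GENERATOR}(ii) we have $\int \sH_\nu(\alpha) \, d\tau \le \sH_\mu(\alpha) < \infty$. Hence $\int f_G(X, \nu) \, d\tau \le \sH_\mu(\alpha) < \infty$. Since $r > 1$, the term $-(r - 1)\sH(\tau)$ is non-positive, so if the integral were $-\infty$ the displayed formula would force $f_G(X, \mu) = -\infty$, contrary to hypothesis; thus $\int f_G(X, \nu) \, d\tau$ is finite. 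Rearranging, $(r - 1)\sH(\tau) = \int f_G(X, \nu)\, d\tau - f_G(X, \mu)$ is a difference of finite quantities, and since $r - 1 \ge 1$ we conclude $\sH(\tau) < \infty$.

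To finish, recall from Section \ref{SEC PRE} that any probability measure of finite Shannon entropy is purely atomic; applied to $\tau$ on the standard Borel space $\M(X)$, this shows $\tau$ is purely atomic, hence concentrated on a countable set of atoms. Since $\tau$ is supported on $\E(X)$ and $\mu = \int \nu \, d\tau$, these atoms are exactly the ergodic components of the action, so there are only countably many of them. I do not expect any genuine obstacle here beyond Theorem \ref{THM ERGDEC} itself; the one point requiring a moment's care is ruling out the case $\int f_G(X, \nu)\, d\tau = -\infty$ while $f_G(X, \mu) \neq -\infty$, which is dispatched using the sign of $-(r-1)\sH(\tau)$ as above.
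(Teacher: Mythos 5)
Your proof is correct and takes essentially the paper's own route: the paper deduces the corollary from Theorem \ref{THM ERGDEC}, observing that $f_G(X,\mu) \neq -\infty$ forces $\sH(\tau) < \infty$ and that a measure of finite Shannon entropy is purely atomic, hence $\tau$ has only countably many atoms. Your additional care in ruling out $\int f_G(X,\nu)\,d\tau = -\infty$ via $f_G(X,\nu) \leq \sH_\nu(\alpha)$ and Lemma \ref{LEM GENERATOR} is a sound way to extract the same conclusion from the theorem's statement, where the paper instead reads it off from the inequality $f_G(X,\mu) \leq \sH_\mu(\alpha) - (r-1)\sH_\tau(\Lambda_n)$ established inside the theorem's proof.
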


\section{Stabilizers of Factor Actions} \label{SEC MAIN}

In this section we prove our main theorem which characterizes the stabilizers which can appear in factors of actions having finite f-invariant entropy.

If $(X, \mu)$ is a probability space and $B \subseteq X$ is Borel with $\mu(B) > 0$, then we define a probability measure $\mu_B$ on $X$ by
$$\mu_B(A) = \frac{\mu(A \cap B)}{\mu(B)}$$
for Borel sets $A \subseteq X$. Notice that if $\alpha$, $\beta$, and $\xi$ are countable measurable partitions of $(X, \mu)$ then
$$\sH(\alpha / \beta \vee \xi) = \sum_{B \in \beta} \sum_{C \in \xi} \sum_{A \in \alpha} - \mu(B \cap C) \cdot \frac{\mu(A \cap B \cap C)}{\mu(B \cap C)} \cdot \log \left( \frac{\mu(A \cap B \cap C)}{\mu(B \cap C)} \right)$$
$$= \sum_{B \in \beta} \mu(B) \cdot \sH_{\mu_B}(\alpha / \xi).$$
Also notice that if $\nu = \mu_B$ then $\nu_C = \mu_{B \cap C}$.

\begin{lem} \label{LEM CONSHAN}
Let $(X, \mu)$ be a probability space. If $\delta > 0$ then there is $\rho > 0$ such that if two countable measurable partitions $\alpha$, $\beta$ of $X$ satisfy $\sH_\mu(\alpha / \beta) < \rho$ then there is a subcollection $\beta' \subseteq \beta$ consisting of positive measure sets such that $\mu(\cup \beta') > 1 - \delta$ and for every $B \in \beta'$ there is $A \in \alpha$ with $\mu_B(A) > 1 - \delta.$
\end{lem}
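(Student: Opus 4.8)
The plan is to reduce everything to a statement about a single probability vector and exploit the fact that small Shannon entropy forces the vector to be concentrated on one coordinate. Concretely, recall that $\sH_\mu(\alpha/\beta) = \sum_{B \in \beta} \mu(B) \cdot \sH_{\mu_B}(\alpha)$, a weighted average over the classes $B$ of the ``local'' entropies $\sH_{\mu_B}(\alpha)$, where each $\sH_{\mu_B}(\alpha) = -\sum_{A \in \alpha} \mu_B(A) \log \mu_B(A)$ is the entropy of the probability vector $(\mu_B(A))_{A \in \alpha}$. So the first step is an elementary lemma about a single countable probability vector: for every $\delta > 0$ there is $\eta > 0$ such that if $-\sum_i p_i \log p_i < \eta$ then $\max_i p_i > 1 - \delta$. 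This is standard: if all $p_i \le 1 - \delta$, then since $-t\log t$ is increasing on $(0, 1/e)$ and one can bound the entropy below away from $0$ using, e.g., that the entropy is at least $-(1-\delta)\log(1-\delta) - \delta \log \delta$ when no mass exceeds... more carefully, one notes the entropy of any distribution with $\max_i p_i \le 1-\delta$ is bounded below by the minimum of the (concave) entropy function over the compact convex set $\{\max p_i \le 1 - \delta\}$, and this minimum is a positive constant $c(\delta)$; set $\eta = c(\delta)$.

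Next I would run a Markov-type averaging argument. Set $\eta = c(\delta) > 0$ from the first step and choose $\rho = \delta \eta$. Suppose $\sH_\mu(\alpha/\beta) < \rho = \delta\eta$. Let $\beta_0 = \{B \in \beta : \mu(B) > 0\}$ and let $\beta'' = \{B \in \beta_0 : \sH_{\mu_B}(\alpha) \ge \eta\}$ be the ``bad'' classes. By Markov's inequality applied to the nonnegative function $B \mapsto \sH_{\mu_B}(\alpha)$ against the measure $\mu$ restricted to classes of $\beta_0$,
\[
\mu\Big(\bigcup \beta''\Big) \le \frac{1}{\eta} \sum_{B \in \beta_0} \mu(B) \sH_{\mu_B}(\alpha) = \frac{\sH_\mu(\alpha/\beta)}{\eta} < \frac{\delta \eta}{\eta} = \delta.
\]
Now set $\beta' = \beta_0 \setminus \beta''$. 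Then $\mu(\bigcup \beta') = \mu(\bigcup\beta_0) - \mu(\bigcup\beta'') > 1 - \delta$ (using $\mu(\bigcup\beta_0) = 1$), and every $B \in \beta'$ consists of positive measure and satisfies $\sH_{\mu_B}(\alpha) < \eta = c(\delta)$, so by the first step $\max_{A \in \alpha} \mu_B(A) > 1 - \delta$, i.e.\ there is $A \in \alpha$ with $\mu_B(A) > 1 - \delta$. This is exactly the conclusion.

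The only genuine content is the first step — the quantitative ``small entropy implies concentration'' estimate for a single distribution — and even that is routine; the main thing to be careful about is that $\alpha$ may be a countably infinite partition, so one should phrase the bound via the compactness/continuity of the entropy functional on the (weakly compact) simplex $\{p : \max_i p_i \le 1 - \delta\}$, or alternatively give a direct elementary inequality such as: if $p_1 = \max_i p_i \le 1 - \delta$ then $-\sum_i p_i \log p_i \ge -p_1 \log p_1 - (1-p_1)\log(1-p_1) \ge$ the minimum of $H_2(t) := -t\log t - (1-t)\log(1-t)$ over $t \in [0, 1-\delta]$, which is $\min\{H_2(0), H_2(1-\delta)\} = \min\{0, H_2(1-\delta)\}$ — wait, $H_2$ is not monotone, so one instead uses that $H_2(t) \ge \min\{H_2(\delta), H_2(1-\delta)\} > 0$ on... actually the cleanest is: the entropy is a concave function on the simplex whose only zeros are the vertices (point masses), so on the closed set avoiding a $\delta$-neighborhood of every vertex it attains a positive minimum $c(\delta)$. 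I expect the bookkeeping in passing from $\beta$ to $\beta_0$ to $\beta'$ (discarding null classes, then bad classes) to be the most error-prone part to write cleanly, but it presents no real obstacle.
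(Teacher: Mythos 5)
Your reduction is the same as the paper's: write $\sH_\mu(\alpha/\beta)=\sum_{B}\mu(B)\,\sH_{\mu_B}(\alpha)$, discard null classes, and use a Markov/Chebyshev argument with $\rho=\delta\cdot\eta$ to throw away the classes $B$ with $\sH_{\mu_B}(\alpha)\geq\eta$; that part is correct and is exactly how the proof in the paper goes (with $\eta=\epsilon=-\delta\log\delta-(1-\delta)\log(1-\delta)$). The gap is in the only step you yourself identify as the real content: the claim that there is $c(\delta)>0$ so that any countable probability vector $p$ with $\sH(p)<c(\delta)$ satisfies $\max_i p_i>1-\delta$. Your elementary bound $\sH(p)\geq H_2(\max_i p_i)$ (entropy of the binary coarsening) is true but useless precisely in the problematic case: if the mass is spread thin, $\max_i p_i$ is small, $H_2(\max_i p_i)$ is small, and the bound gives nothing, even though the actual entropy is large. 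Your fallback — ``the set $\{p:\max_i p_i\leq 1-\delta\}$ is (weakly) compact and entropy attains a positive minimum on it'' — is not valid for countably infinite partitions: that set is not weakly compact in $\ell^1$ (mass escapes to infinity, e.g.\ along $(1-\delta)e_n+\delta e_{n+1}$), and entropy is not continuous there, only lower semicontinuous, so no minimum is guaranteed by that reasoning. The conclusion is true, but as written you have not proved it.

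The missing idea, which is how the paper closes exactly this hole, is a second coarsening trick for the ``all masses small'' case. Assume $\delta<\tfrac13$ and take $\eta=H_2(\delta)=-\delta\log\delta-(1-\delta)\log(1-\delta)$. Fix $B$ with $\sH_{\mu_B}(\alpha)<\eta$. If some $A\in\alpha$ has $\delta<\mu_B(A)<1-\delta$, the coarsening $\{A,X\setminus A\}$ gives $\sH_{\mu_B}(\alpha)\geq H_2(\mu_B(A))\geq H_2(\delta)=\eta$, a contradiction; so every class has $\mu_B$-mass either $<\delta$ or $>1-\delta$. If no class had mass $>1-\delta$, then all classes have mass $<\delta$, and one can greedily group classes of $\alpha$ into a set $C$ with $\mu_B(C)\in[\tfrac12-\tfrac\delta2,\tfrac12+\tfrac\delta2]$; the coarsening $\{C,X\setminus C\}$ then forces $\sH_{\mu_B}(\alpha)\geq H_2\!\left(\tfrac12+\tfrac\delta2\right)>H_2(\delta)=\eta$ (using $\delta<\tfrac13$), again a contradiction. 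Hence some $A$ has $\mu_B(A)>1-\delta$, and your Markov step finishes the proof. So the architecture of your argument matches the paper, but you need to replace the compactness appeal by this grouping argument (or some equivalent quantitative estimate valid for countably infinite alphabets).
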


\begin{proof}
By making $\delta$ smaller if necessary, we may suppose that $\delta < \frac{1}{3}$. Set
$$\epsilon = - \delta \cdot \log(\delta) - (1 - \delta) \cdot \log(1 - \delta) > 0$$
and set $\rho = \epsilon \cdot \delta$. Let $\alpha$ and $\beta$ be countable measurable partitions of $X$ satisfying $\sH_\mu(\alpha / \beta) < \rho$. Define
$$\beta' = \{B \in \beta \: \mu(B) > 0 \text{ and } \sH_{\mu_B}(\alpha) < \epsilon\}.$$
Then
$$\epsilon \cdot \mu(X \setminus \cup \beta') \leq \sum_{B \in \beta} \mu(B) \cdot \sH_{\mu_B}(\alpha) = \sH_\mu(\alpha / \beta) < \rho.$$
So $\mu(X \setminus \cup \beta') < \rho / \epsilon = \delta$. Thus $\mu(\cup \beta') > 1 - \delta$ as required.

Fix $B \in \beta'$. We must show that there is $A \in \alpha$ with $\mu_B(A) > 1 - \delta$. If $A \in \alpha$ satisfies
$$\delta < \mu_B(A) < 1 - \delta,$$
then the partition $\xi = \{A, X \setminus A\}$ is a coarsening of $\alpha$ and we have
$$- \delta \cdot \log(\delta) - (1 - \delta) \cdot \log(1 - \delta) < \sH_{\mu_B}(\xi) \leq \sH_{\mu_B}(\alpha) < \epsilon,$$
contradicting the definition of $\epsilon$. So for every $A \in \alpha$, we have that $\mu_B(A)$ is either less than $\delta$ or greater than $1 - \delta$. Towards a contradiction, suppose that $\mu_B(A) < \delta$ for all $A \in \alpha$. Then we can find a set $C$ which is a union of members of $\alpha$ such that $\mu_B(C) \in [\frac{1}{2} - \frac{\delta}{2}, \frac{1}{2} + \frac{\delta}{2}]$. Then the partition $\gamma = \{C, X \setminus C\}$ is a coarsening of $\alpha$ so
$$- \left( \frac{1}{2} - \frac{\delta}{2} \right) \cdot \log \left( \frac{1}{2} - \frac{\delta}{2} \right) - \left( \frac{1}{2} + \frac{\delta}{2} \right) \cdot \log \left( \frac{1}{2} + \frac{\delta}{2} \right) \leq \sH_{\mu_B}(\gamma)$$
$$\leq \sH_{\mu_B}(\alpha) < \epsilon = - \delta \cdot \log(\delta) - (1 - \delta) \cdot \log(1 - \delta),$$
contradicting the fact that $\delta < \frac{1}{3}$. We conclude there is $A \in \alpha$ with $\mu_B(A) > 1 - \delta$.
\end{proof}

\begin{lem}
Let $G$ act on a probability space $(X, \mu)$. Let $|g|$ denote the reduced $S$-word length of $g \in G$. Suppose that $\mu$-almost every $x \in X$ has non-trivial stabilizer. Then for every $\epsilon > 0$ there exists $1_G \neq g \in G$ for which
$$\mu(\{x \in X \: g \cdot x \in \B{|g|-1}{S} \cdot x\}) > 1 - \epsilon.$$
\end{lem}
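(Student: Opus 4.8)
The plan is to first compress the hypothesis down to finitely much data, then produce the single element $g$ by a combinatorial argument in the free group. For the first step, recall that $G$ is torsion-free, so a point stabilizer is either trivial or infinite. Each set $\mathrm{Fix}(k)=\{x\in X\:kx=x\}$ is measurable, and the hypothesis says $\mu\big(\bigcup_{1_G\neq k\in G}\mathrm{Fix}(k)\big)=1$. Since $G$ is countable, continuity of measure gives an $N\in\N$ with $\mu(E)>1-\epsilon$ for the finite set $K=\B{N}{S}\setminus\{1_G\}$ and $E:=\bigcup_{k\in K}\mathrm{Fix}(k)$; equivalently, for $\mu$-almost every $x\in E$ the stabilizer $\Stab(x)$ contains a nontrivial element of length at most $N$. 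Note that any such $x$, with witness $k\in K\cap\Stab(x)$, lies in $\{x\:k^m\cdot x\in\B{|k^m|-1}{S}\cdot x\}$ for every $m\geq 1$, since $k^m\cdot x=x\in\B{0}{S}\cdot x$. Thus if some single $\mathrm{Fix}(k)$ already has measure $>1-\epsilon$ we are done, and the real task is to find one $g$ that works simultaneously for all the witnesses occurring in $E$.

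The second step is to use invariance of $\mu$ to rule out ``cyclic-like'' stabilizers, which is what will make a uniform choice of $g$ possible. If $H\leq G$ has infinitely many distinct conjugates, then $\mu(\{x\:\Stab(x)=H\})=0$: the action of $g\in G$ carries this set onto $\{x\:\Stab(x)=gHg^{-1}\}$, these sets are pairwise disjoint with the same measure, and there are infinitely many of them. A finitely generated subgroup $H$ of $G$ of infinite index has infinitely many conjugates — its normalizer $N_G(H)$ cannot have finite index, for otherwise $N_G(H)$ would be a finite-rank free group in which $H$ is normal of infinite index, forcing $H=1$ by the classical fact that finitely generated normal subgroups of free groups are trivial or of finite index. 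Hence for $\mu$-almost every $x$ the stabilizer $\Stab(x)$ is either of finite index in $G$ or not finitely generated; combined with the first step, almost every $x\in E$ has $\Stab(x)$ of finite index, or else $\Stab(x)$ infinitely generated and meeting $K$.

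The third step is the construction of $g$, and this is where the main difficulty lies. The finite-index case is easy and uniform: after discarding a further $\epsilon$, the relevant stabilizers all have index at most some $D$, and then the Schreier coset graph of $\Stab(x)$ has at most $D$ vertices, hence diameter $<D$, so $g\,\Stab(x)\cap\B{D-1}{S}\neq\varnothing$; thus \emph{any} $g$ with $|g|\geq D$ handles all such $x$. For the infinitely generated stabilizers one wants a single long $g$ (also of length $\geq D$) such that the coset $g\,\Stab(x)$ contains an element shorter than $g$ for almost every such $x$ — and the natural attempt, $g=t^{M}$ for a fixed generator $t$ and huge $M$, reduces the problem to showing that $\mu$-almost every $x\in E$ with infinitely generated stabilizer has an element of $\Stab(x)$ whose reduced $S$-word contains a run of $t^{\pm1}$ of length exceeding half of the element's length (this forces $|t^{M}h|<M$ for a suitable $h\in\Stab(x)$). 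Establishing this $t$-alignment — using that $\Stab(x)$ is infinite, hence contains arbitrarily long elements, and exploiting $\langle t\rangle$-invariance of $\mu$ to rotate some stabilizer element into alignment with $t$ — for all but an $\epsilon$-fraction of $x$ at once is the crux; once it is in hand, for $g=t^{M}$ with $M$ large the set $\{x\:g\cdot x\in\B{|g|-1}{S}\cdot x\}$ contains the finite-index part of $E$ together with (almost all of) the $t$-aligned part, and its measure exceeds $1-\epsilon$ after absorbing the exceptional null and small-measure sets into a relabeled $\epsilon$.
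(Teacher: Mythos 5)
Your first two steps are sound (the fixation sets are measurable, so almost every stabilizer contains a nontrivial witness in some fixed ball $\B{N}{S}$; and since a countable group has only countably many finitely generated subgroups, the conjugation argument does show that almost every nontrivial stabilizer is of finite index or infinitely generated), and the Schreier-graph treatment of the finite-index points is correct: any $g$ with $|g|\geq D$ works for all $x$ with $[G:\Stab(x)]\leq D$. But the entire content of the lemma sits in the step you leave open. For the infinitely generated stabilizers you need, for one fixed $g=t^{M}$, that almost every such $x$ admits $h\in\Stab(x)$ with $|t^{M}h|<M$, and you give no proof of the ``$t$-alignment'' this requires. Note first that the reduction is already misstated: a long $t^{\pm1}$-run in the \emph{middle} of $h$ produces no cancellation in $t^{M}h$; what is needed is a $t^{-1}$-prefix (or, passing to $h^{-1}$, a $t$-suffix) of length more than half of $|h|$. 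Second, the mechanism you hint at cannot supply it: $\langle t\rangle$-invariance only lets you replace $x$ by $t^{-n}x$ and $\Stab(x)$ by $t^{-n}\Stab(x)t^{n}$, and conjugation by $t^{-n}$ prepends $t^{-n}$ but also appends $t^{n}$, so the length grows by the same amount as the prefix run; for instance if $h$ neither begins nor ends with a power of $t$ then $|t^{M}(t^{-n}ht^{n})|=M+|h|\geq M$, and in general the defect $|h|-2j$ ($j$ the $t^{-1}$-prefix run) is unchanged, so rotating along the $t$-orbit gains nothing. More fundamentally, fixing $g$ to be a power of a single generator is too rigid: nothing forces a nontrivial, even infinitely generated, stabilizer to contain any element whose $t^{-1}$-prefix exceeds half its length, so the crux claim is not merely unproven but likely unprovable in that form.

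The paper's proof avoids any structural hypothesis on the stabilizers by constructing $g$ adaptively. Letting $\phi(x)$ be the $\preceq$-least nontrivial stabilizing element, it chooses $k$ with $\mu(\phi^{-1}(\B{k}{S}))>1-\delta$ and builds pairs $(g_m,Y_m)$ inductively: given $(g_{m-1},Y_{m-1})$, measure preservation guarantees that the set $\phi^{-1}(\B{k}{S})\setminus t\,g_{m-1}\cdot Y_{m-1}$ (for a suitable long $t$ with $|t\,g_{m-1}|=|t|+|g_{m-1}|$) still has substantial measure, a pigeonhole choice produces a popular witness $u\in\B{k}{S}$ there, and one sets $g_m=p\,g_{m-1}$ with $p=t^{-1}u^{\pm1}t$, the sign chosen so that $|p\,g_{m-1}|=|p|+|g_{m-1}|$. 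On the old set $Y_{m-1}$ the property $g_m\cdot y\in\B{|g_m|-1}{S}\cdot y$ is inherited, on the newly adjoined points one gets $g_m\cdot y=g_{m-1}\cdot y$, and $\mu(Y_m)$ increases geometrically toward $1-\delta$. This greedy construction needs no dichotomy between finite-index and infinitely generated stabilizers and no alignment of stabilizer elements with a prescribed generator, which is exactly what your fixed choice $g=t^{M}$ cannot replicate; to repair your argument you would have to either prove the alignment claim (which the conjugation trick cannot do) or abandon the fixed form of $g$ in favour of an adaptive construction of this kind.
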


\begin{proof}
Fix a total ordering of $S \cup S^{-1}$ and let $\preceq$ be the induced well ordering of $G$ (see the paragraph just before Theorem \ref{INTRO DISS}). For $x \in X$ we define $\phi(x)$ to be the $\preceq$-least $1_G \neq g \in G$ with $g \cdot x = x$. If $x$ has trivial stabilizer then we define $\phi(x) = 1_G$. Then $\phi: X \rightarrow G$ is a measurable function satisfying $\mu(\phi^{-1}(1_G)) = 0$ and $\phi(x) \cdot x = x$ for every $x \in X$.

Fix $\epsilon > 0$ and set $\delta = \frac{\epsilon}{2}$. Let $k \in \N$ be such that $\mu(\phi^{-1}(\B{k}{S})) > 1 - \delta$. Let $M \in \N$ be such that
$$(1 - \delta) \cdot \left( 1 - \left( 1 - \frac{1}{|\B{k}{S}|} \right)^M \right) > 1 - 2 \delta.$$

Set $Y_0 = \varnothing$. We will inductively define $g_j \in G$ and $Y_j \subseteq X$ for $0 < j \leq M$ which satisfy the following two conditions for every $j > 0$:
$$\forall y \in Y_j \ g_j \cdot y \in \B{|g_j|-1}{S} \cdot y;$$
$$\mu(Y_j) > \left(1 - \frac{1}{|\B{k}{S}|} \right) \mu(Y_{j-1}) + \frac{1}{|\B{k}{S}|} \cdot (1 - \delta).$$
We first define $g_1$ and $Y_1$. Since $\mu(\phi^{-1}(\B{k}{S})) > 1 - \delta$, we can pick $1_G \neq g_1 \in \B{k}{S}$ so that $Y_1 = \phi^{-1}(g_1)$ satisfies
$$\mu(Y_1) > \frac{1}{|\B{k}{S}|} ( 1 - \delta).$$
The two properties above are then satisfied since $g_1$ stabilizes every point in $Y_1$. Now suppose that $1 < m \leq M$ and $g_{m-1}$ and $Y_{m-1}$ have been defined and satisfy the two conditions above. Fix any $t \not\in \B{k}{S}$ such that $|t g_{m-1}| = |t| + |g_{m-1}|$. Since $\mu(\phi^{-1}(\B{k}{S})) > 1 - \delta$, we have that
$$\mu \bigg( \phi^{-1}(\B{k}{S}) \setminus t g_{m-1} \cdot Y_{m-1} \bigg) > 1 - \delta - \mu(Y_{m-1}).$$
So we can find $1_G \neq u \in \B{k}{S}$ such that $Z = \phi^{-1}(u)$ satisfies
$$\mu \bigg( Z \setminus t g_{m-1} \cdot Y_{m-1} \bigg) > \frac{1}{|\B{k}{S}|} \cdot \bigg( 1 - \delta - \mu(Y_{m-1}) \bigg).$$

For $h \in G$ let $W_S(h)$ denote the reduced $S$-word representation of $h$. If $W_S(t)$ begins (on the left) with $W_S(u t)$ then $W_S(t^{-1} u^{-1} t)$ and $W_S(t)$ end with the same letter (note that $t^{-1} u^{-1} t \neq 1_G$). In this case we set $p = t^{-1} u^{-1} t$ and observe that
$$|p g_{m-1}| = |p| + |g_{m-1}|.$$
If $W_S(t)$ does not begin with $W_S(u t)$ then $W_S(t^{-1} u t)$ and $W_S(u t)$ end with the same letter. Furthermore, from $|u| < |t|$ we find that $W_S(u t)$ and $W_S(t)$ end with the same letter. So in this case we set $p = t^{-1} u t$ and again observe that
$$|p g_{m-1}| = |p| + |g_{m-1}|.$$

We set $g_m = p g_{m-1}$ and $Y_m = Y_{m-1} \cup (t g_{m-1})^{-1} \cdot Z$. We now check that the two inductive hypothesis are satisfied. Fix $y \in Y_m$. First suppose that $y \in Y_{m-1}$. Then
$$g_m \cdot y = p g_{m-1} \cdot y \in p \B{|g_{m-1}|-1}{S} \cdot y \subseteq \B{|p|+|g_{m-1}|-1}{S} \cdot y = \B{|g_m|-1}{S} \cdot y.$$
Now suppose that $y \in Y_m \setminus Y_{m-1}$. Set $z = t g_{m-1} \cdot y$. Then $z \in Z = \phi^{-1}(u)$. It follows
$$g_m \cdot y = p g_{m-1} \cdot y = t^{-1} u^{\pm 1} t g_{m-1} \cdot y = t^{-1} u^{\pm 1} \cdot z$$
$$= t^{-1} \cdot z = g_{m-1} \cdot y \in \B{|g_{m-1}|}{S} \cdot y \subseteq \B{|g_m|-1}{S} \cdot y.$$
So the first condition is satisfied. For the second, we have
$$\mu(Y_m) = \mu(Y_{m-1}) + \mu(Y_m \setminus Y_{m-1}) = \mu(Y_{m-1}) + \mu(Z \setminus t g_{m-1} \cdot Y_{m-1})$$
$$> \mu(Y_{m-1}) + \frac{1}{|\B{k}{S}|} \cdot (1 - \delta - \mu(Y_{m-1})) = \left(1 - \frac{1}{|\B{k}{S}|} \right) \mu(Y_{m-1}) + \frac{1}{|\B{k}{S}|} \cdot (1 - \delta).$$
This completes the inductive construction.

By our condition on the measures of the $Y_j$'s we have
$$\mu(Y_M) > \frac{1}{|\B{k}{S}|} (1 - \delta) \cdot \sum_{i = 0}^{M-1} \left(1 - \frac{1}{|\B{k}{S}|} \right)^i$$
$$= (1 - \delta) \cdot \left(1 - \left(1 - \frac{1}{|\B{k}{S}|} \right)^M \right) > 1 - 2 \delta = 1 - \epsilon.$$
Using $g = g_M$ completes the proof.
\end{proof}

We now prove a weakened version of the main theorem.

\begin{prop} \label{MAIN PROP}
Let $G$ have rank $r > 1$ and let $G$ act on a probability space $(X, \mu)$. Assume that $f_G(X, \mu)$ is defined. If $G \acts (X, \mu)$ is ergodic and $f_G(X, \mu) \neq - \infty$ then either the action is essentially free or else $\mu$ is purely atomic.
\end{prop}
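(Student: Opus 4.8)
The plan is to argue by contradiction: assume $G \acts (X,\mu)$ is ergodic, $f_G(X,\mu) \neq -\infty$, the action is \emph{not} essentially free, and $\mu$ is not purely atomic, and derive a contradiction. Since the action is not essentially free, the set of points with non-trivial stabilizer is a $G$-invariant measurable set of positive measure, hence by ergodicity it has full measure; so $\mu$-almost every $x \in X$ has non-trivial stabilizer. This is precisely the hypothesis of the second lemma above, which then produces, for every $\epsilon > 0$, a non-identity element $g \in G$ with $\mu(\{x : g \cdot x \in \B{|g|-1}{S} \cdot x\}) > 1 - \epsilon$. The idea is that such a $g$ forces the partition $g \cdot \alpha$ to be almost entirely determined by $\B{|g|-1}{S} \cdot \alpha$ (equivalently by $\Pre(g) \cdot \alpha$, since $\B{|g|-1}{S} \subseteq \Pre(g)$), which should make the conditional entropy $\sH(g \cdot \alpha / \Pre(g) \cdot \alpha)$ small and hence the independence decay $\delta(g', \alpha)$ \emph{large} at the step $g'$ reaching $g$; combined with Theorem~\ref{THM DISS} this contradicts $f_G(X,\mu) \neq -\infty$.

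To make this work I would first pass to a generating partition $\alpha$ with $\sH(\alpha) < \infty$ and then replace it with $\B{n}{S} \cdot \alpha$ for large $n$; by Lemma~\ref{LEM SMALLSUM}, $\sum_{h \in G \setminus \B{1}{S}} \delta(h, \B{n}{S}\cdot\alpha) \to 0$, and in particular $\delta(h, \B{n}{S}\cdot\alpha)$ can be made uniformly small for all $h$ outside $\B{1}{S}$. Now fix $\epsilon$ tiny and apply the lemma to get $g$ with $\mu(\{x : g\cdot x \in \B{|g|-1}{S}\cdot x\}) > 1-\epsilon$; note $|g|$ will generally be large. The key quantitative claim is: on the event $E_g = \{x : g \cdot x \in \B{|g|-1}{S} \cdot x\}$, knowing which cell of $\B{|g|-1}{S} \cdot (\B{n}{S}\cdot\alpha)$ a point lies in determines (on $E_g$) which cell of $g \cdot (\B{n}{S}\cdot\alpha)$ it lies in, because $g^{-1}$ restricted to $E_g$ agrees with one of the finitely many maps $h^{-1}$, $h \in \B{|g|-1}{S}$, and this information is already encoded. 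Hence $\sH(g \cdot (\B{n}{S}\cdot\alpha) \,/\, \B{|g|-1}{S}\cdot(\B{n}{S}\cdot\alpha))$ is bounded by an expression of the form $\sH(\text{a partition into} \leq |\B{|g|-1}{S}|+1 \text{ pieces}) + \epsilon \cdot \sH(\B{n}{S}\cdot\alpha) \cdot(\text{something})$ — I would use the standard Fano-type inequality $\sH(\zeta \vee \eta) \le \sH(\zeta) + \mu(E^c)\sH(\eta \,/\, E^c) + \sH(\{E,E^c\})$ to control the part of $g\cdot\alpha$ not pinned down off $E_g$. The subtlety is that $\sH(\B{n}{S}\cdot\alpha)$ grows with $n$, so I must choose $\epsilon$ (hence $g$) \emph{after} fixing $n$, which is fine since the lemma gives $g$ for every $\epsilon$.

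With $\sH(g\cdot\alpha' / \Pre(g)\cdot\alpha')$ small (where $\alpha' = \B{n}{S}\cdot\alpha$), I would use Lemma~\ref{LEM PATH} along a geodesic path $p_0 = s, p_1, \dots, p_\ell = g$ in the left Cayley graph with $|p_0| < \cdots < |p_\ell|$ to write $\sum_{i=1}^\ell \delta(p_i, \alpha'/\Sigma) = \sH(s\cdot\alpha'/\Pre(s)\cdot\alpha') - \sH(g\cdot\alpha'/\Pre(g)\cdot\alpha')$. The left side is at most $\sum_{h \in G\setminus\B{1}{S}} \delta(h, \alpha')$, which is small by Lemma~\ref{LEM SMALLSUM}; but the first term on the right, $\sH(s\cdot\alpha'/\Pre(s)\cdot\alpha')$, is by Lemma~\ref{LEM TECH} at least $\sH(\B{n-1}{S}\cdot\alpha/\B{n-2}{S}\cdot\alpha) - \epsilon$, which by Corollary~\ref{COR GROWTH} (using that $\mu$ is not purely atomic and $r > 1$) tends to infinity as $n \to \infty$. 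So choosing $n$ large enough that this term exceeds, say, $1$, and then $\epsilon$ small enough that both the second term $\sH(g\cdot\alpha'/\Pre(g)\cdot\alpha')$ and the sum $\sum_{h\notin\B{1}{S}}\delta(h,\alpha')$ are below $1/2$, yields the contradiction. (For the case $r = 1$ the statement is about $\Z$-actions where f-invariant entropy equals Kolmogorov--Sinai entropy; a non-atomic ergodic $\Z$-action with a finite-entropy generator and a non-trivial stabilizer would be periodic on a positive-measure invariant set, forcing $\mu$ purely atomic by ergodicity, so that case is immediate and I would dispatch it first.)

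\textbf{Main obstacle.} The delicate point is the quantitative estimate bounding $\sH(g\cdot\alpha' / \Pre(g)\cdot\alpha')$ in terms of $\epsilon$ and a fixed (n-dependent but $\epsilon$-independent) quantity: one must ensure that the ``bad set'' contribution, where $g^{-1}$ does not coincide with any $h^{-1}$ for $h \in \B{|g|-1}{S}$, is genuinely controlled by $\epsilon$ times something not too large, and in particular does not reintroduce a factor that grows with $|g|$ (which itself depends on $\epsilon$). Using the conditional-entropy decomposition $\sH(\cdot/\cdot) = \sum_B \mu(B)\sH_{\mu_B}(\cdot)$ together with the fact that on $E_g$ the relevant cell is \emph{deterministic} given $\Pre(g)\cdot\alpha'$ should localize the error to $\mu(E_g^c) < \epsilon$ fraction of the mass, where the conditional entropy of a countable partition is still bounded — but here $\B{n}{S}\cdot\alpha$ may have infinitely many cells, so one needs the finiteness of $\sH(\B{n}{S}\cdot\alpha)$ to bound $\mu(E_g^c)\cdot\sH_{\mu_{E_g^c}}(\B{n}{S}\cdot\alpha)$, and a little care (e.g. via Lemma~\ref{LEM GENERATOR}-style convexity or a direct Shannon-entropy estimate) is needed to see this is $\le \epsilon^{1/2}$ or so, not merely finite. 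Once that estimate is in hand, the rest is bookkeeping with the lemmas already established.
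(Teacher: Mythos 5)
Your overall skeleton — contradiction, ergodicity forcing almost every stabilizer to be non-trivial, the stabilizer lemma, rebasing to $\alpha' = \B{n}{S} \cdot \alpha$, Lemmas \ref{LEM SMALLSUM} and \ref{LEM TECH} with Corollary \ref{COR GROWTH}, and the path Lemma \ref{LEM PATH} — is sound and close in spirit to the paper's proof of Theorem \ref{THM KOLM}. But the heart of your argument, the claim that $\mu(E_g) > 1 - \epsilon$ forces $\sH(g \cdot \alpha' / \B{|g|-1}{S} \cdot \alpha')$ (hence $\sH(g \cdot \alpha' / \Pre(g) \cdot \alpha')$) to be small, has a genuine gap. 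On $E_g$ the element $g$ agrees at the point $x$ with some $h \in \B{|g|-1}{S}$, but \emph{which} $h$ works is a non-trivial measurable function of $x$ (the finite partition $\xi$ induced by $\psi$ in the paper), and that information is neither measurable with respect to, nor known to be well approximated by, the conditioning algebra $\B{|g|-1}{S} \cdot \alpha' = \B{|g|+n-1}{S} \cdot \alpha$. Your Fano-type bound makes the cost explicit: the term ``entropy of a partition into at most $|\B{|g|-1}{S}|+1$ pieces'' can be of size $\log |\B{|g|-1}{S}| \approx |g| \log(2r-1)$, and $|g|$ comes out of the stabilizer lemma and blows up as $\epsilon \to 0$, so this error is not controlled by anything fixed in advance; generation of $\alpha$ gives $\sH(\xi / \B{N}{S} \cdot \alpha) \to 0$ only for the fixed partition $\xi$, with no rate, and $\xi$ itself depends on $g$, so there is no way to guarantee $\xi$ is nearly determined by the ball of radius $|g|+n-1$. (Two smaller points, both fixable: a left-Cayley path with strictly increasing lengths from a generator to $g$ forces $p_0$ to be the rightmost letter of $g$; and to pin down the cell of $g^{-1}x$ one needs the coincidence in the orientation $g^{-1} \cdot x \in \B{|g|-1}{S} \cdot x$ rather than $g \cdot x \in \B{|g|-1}{S} \cdot x$, together with knowing the coinciding element.)

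The paper's proof is engineered precisely around this obstacle, and it never bounds the conditional entropy at $g$ itself. It first fixes $g$ and the finite partition $\xi$, then chooses $N > n + |g|$ so large that $\sH(\xi / \B{N-1}{S} \cdot \alpha) < \rho$, and estimates the conditional entropy at far-out elements $g^{-1} t w \in \Sp{N}{S}$ (one for each $w \in \Sp{n}{S}$), whose predecessor set contains $\B{N-1}{S}$, so the conditioning algebra nearly determines $\xi$. Lemma \ref{LEM CONSHAN} converts $\sH(\xi / \B{N-1}{S} \cdot \alpha) < \rho$ into the statement that on most cells $C$ of $\Pre(g^{-1} t w) \cdot \alpha$ a single $h$ has $\mu_C(\psi^{-1}(h)) > 1 - \kappa$, and on such cells the identity $g^{-1} t w \cdot A = g^{-1} h \cdot (h^{-1} t w \cdot A)$, with $g^{-1}h$ fixing $\psi^{-1}(h)$, pins the cell down; an additional low-entropy coarsening $\beta$ of $\alpha$ (lumping $M$ classes, $\sH(\beta) < \delta$, with $\log M$ bounds) is needed because $\alpha$ is countably infinite and the determination is only approximate. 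Finally the bookkeeping differs from yours: rather than rebasing to $\alpha'$, the paper lower-bounds the total decay $\sum_{1_G \neq h} \delta(h, \alpha)$ for the original $\alpha$ by summing the path estimates over the disjoint cones $F(w)$, $w \in \Sp{n}{S}$, obtaining at least $\sH(\B{n}{S} \cdot \alpha / \B{n-1}{S} \cdot \alpha) - \epsilon$, and lets $\epsilon \to 0$ to get $f_G(X,\mu) = -\infty$ from Theorem \ref{THM DISS}. So the step you flag as the ``main obstacle'' is exactly where all the real work lies, and repairing it essentially requires estimating at a sphere of radius $N \gg |g|$ chosen after $g$, not at $g$ itself.
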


\begin{proof}
Let $\alpha$ be a generating partition with $\sH(\alpha) < \infty$. Let $r = |S|$ be the rank of $G$, and let $|g|$ denote the reduced $S$-word length of $g \in G$. Also fix a total ordering of $S \cup S^{-1}$, let $\preceq$ be the induced well ordering of $G$ (see the paragraph before Theorem \ref{INTRO DISS}), and let $\Pre(g)$ denote the set of group elements strictly preceding $g$. Assume that $G \acts (X, \mu)$ is ergodic and not essentially free and that $\mu$ is not purely atomic. We will show that $f_G(X, \mu) = -\infty$.

If $\sH(\B{n}{S} \cdot \alpha / \B{n-1}{S} \cdot \alpha)$ does not tend to infinity then $f_G(X, \mu) = -\infty$ by Corollary \ref{COR GROWTH} and we are done. So suppose that $\sH(\B{n}{S} \cdot \alpha / \B{n-1}{S} \cdot \alpha)$ tends to infinity. Fix $0 < \epsilon < 1/2$ and let $n \in \N$ be such that
$$\sH(\B{n}{S} \cdot \alpha / \B{n-1}{S} \cdot \alpha) > \frac{1}{\epsilon}.$$
Set $\delta = \epsilon / (6 \cdot |\Sp{n}{S}|)$. By combining finitely many classes of $\alpha$ into a single class, we can obtain a (possibly trivial) partition $\beta$ with $\sH(\beta) < \delta$. Say $A_1, A_2, \ldots, A_M \in \alpha$ were combined into a single class to form $\beta$. Set $B = A_1 \cup A_2 \cup \cdots \cup A_M$ so that $\{B\} = \beta \setminus \alpha$. Let $\alpha_M = \{A_1', A_2', \ldots, A_M'\}$ be any measurable partition of $X$ with $A_i \subseteq A_i'$ for each $i$. Notice that for any probability measure $\nu$ on $X$ (not necessarily $G$-invariant) and any $h \in G$ we have
$$\sH_\nu(h \cdot \alpha / h \cdot \beta) = \nu(h \cdot B) \cdot \sH_{\nu_{h \cdot B}}(h \cdot \alpha) = \nu(h \cdot B) \cdot \sH_{\nu_{h \cdot B}}(h \cdot \alpha_M)$$
$$\leq \nu(h \cdot B) \cdot \log(M) \leq \log(M).$$
Since the action is ergodic and not essentially free, the stabilizer of $\mu$-almost every $x \in X$ is non-trivial. By the previous lemma, there is $1_G \neq g \in G$ such that
$$\mu \bigg( \{x \in X \: g \cdot x \in \B{|g|-1}{S} \cdot x\} \bigg) > 1 - \delta / \log(M).$$

For $x \in X$, define $\psi(x)$ to be the $\preceq$-least element of $G$ satisfying $g \cdot x = \psi(x) \cdot x$. Note that $\mu(\psi^{-1}(g)) < \delta / \log(M)$. Since $\psi$ is measurable and its image is finite, it induces a finite measurable partition, $\xi$, of $X$. Let $0 < \kappa < \delta / \log(M)$ be such that
$$-\kappa \cdot \log(\kappa) - (1 - \kappa) \cdot \log(1 - \kappa) < \delta.$$
By Lemma \ref{LEM CONSHAN}, there exists $\rho > 0$ such that if $\chi$ is a countable measurable partition of $X$ satisfying $\sH(\xi / \chi) < \rho$, then there is a subcollection $\chi' \subseteq \chi$ consisting of positive measure sets such that $\mu(\cup \chi') > 1 - \kappa$ and for every $C \in \chi'$ there is $E \in \xi$ with $\mu_C(E) > 1 - \kappa$. Let $N > n + |g|$ be such that
$$\sH(\xi / \B{N-1}{S} \cdot \alpha) < \rho.$$
Such an $N$ exists since $\alpha$ is a generating partition.

Fix $w \in \Sp{n}{S}$. For $f \in G$ let $\delta(f, \alpha)$ be the independence decay at $f$ as defined in the paragraph preceding Theorem \ref{INTRO DISS}. Let $F(w)$ be the set of $w \neq f \in G$ for which $|f| = |f w^{-1}| + |w|$. In other words, $F(w)$ consists of the $w \neq f \in G$ whose reduced $S$-word representations end with the reduced $S$-word representation of $w$. We claim that
$$\sum_{f \in F(w)} \delta(f, \alpha) \geq \sH(w \cdot \alpha / \Pre(w) \cdot \alpha) - \frac{\epsilon}{|\Sp{n}{S}|}.$$
Fix any choice of $t \in G$ such that $g^{-1} t w \in \Sp{N}{S}$ and $|g^{-1} t w| = |g^{-1}| + |t| + |w|$. Then $g^{-1} t w \in F(w)$. Let $p_0 = w, p_1, p_2, \ldots, p_k = g^{-1} t w$ be the sequence of vertices in the path from $w$ to $g^{-1} t w$ in the left $S$-Cayley graph of $G$. Then $p_i \in F(w)$ for $i > 0$. By Lemma \ref{LEM PATH}
$$\sum_{f \in F(w)} \delta(f, \alpha) \geq \sum_{i = 1}^k \delta(p_i, \alpha) = \sH(w \cdot \alpha / \Pre(w) \cdot \alpha) - \sH(g^{-1} t w \cdot \alpha / \Pre(g^{-1} t w) \cdot \alpha).$$
So it suffices to show that $\sH(g^{-1} t w \cdot \alpha / \Pre(g^{-1} t w) \cdot \alpha) < \epsilon / |\Sp{n}{S}|$.

We have $\Pre(g^{-1} t w) \supseteq \B{N-1}{S}$ so
$$\sH(\xi / \Pre(g^{-1} t w) \cdot \alpha) \leq \sH(\xi / \B{N-1}{S} \cdot \alpha) < \rho.$$
Thus there is a collection $\C \subseteq \Pre(g^{-1} t w) \cdot \alpha$ consisting of positive measure sets such that $\mu(\cup \C) > 1 - \kappa$ and for every $C \in \C$ there is $E \in \xi$ with $\mu_C(E) > 1 - \kappa$. Let $\C' \subseteq \C$ consist of those $C \in \C$ for which there is $E \in \xi$ with $E \neq \psi^{-1}(g)$ and $\mu_C(E) > 1 - \kappa$. Observe that
$$\mu(\cup \C') > 1 - \kappa - \frac{\delta / \log(M)}{(1 - \kappa)} > 1 - \kappa - 2 \delta / \log(M)$$
since $\mu(\psi^{-1}(g)) < \delta / \log(M)$. Fix $C \in \C'$ and let $h \in \B{|g|-1}{S}$ be such that $\mu_C(\psi^{-1}(h)) > 1 - \kappa$. As $C \in \Pre(g^{-1} t w) \cdot \alpha$ and $h^{-1} t w \in \Pre(g^{-1} t w)$, we may fix an $A \in \alpha$ with $C \subseteq h^{-1} t w \cdot A$. Since $g^{-1} h$ acts trivially on $\psi^{-1}(h) \cap C$, we have
$$g^{-1} t w \cdot A = g^{-1} h \cdot (h^{-1} t w \cdot A) \supseteq g^{-1} h \cdot (\psi^{-1}(h) \cap C) = \psi^{-1}(h) \cap C.$$
Therefore
$$\mu_C(g^{-1} t w \cdot A) \geq \mu_C(\psi^{-1}(h)) > 1 - \kappa.$$
So $g^{-1} t w \cdot A$ covers most of $C$. If $A \in \beta$ then $\mu_C(g^{-1} t w \cdot B) < \kappa$ and therefore
$$\sH_{\mu_C}(g^{-1} t w \cdot \alpha / g^{-1} t w \cdot \beta) \leq \mu_C(g^{-1} t w \cdot B) \cdot \log(M) \leq \kappa \cdot \log(M).$$
On the other hand, if $A \not\in \beta$ then $A = A_i \subseteq A_i'$ for some $i$, $A \subseteq B$, and
$$\mu_{C \cap g^{-1} t w \cdot B}(g^{-1} t w \cdot A_i') = \mu_{C \cap g^{-1} t w \cdot B}(g^{-1} t w \cdot A) \geq \mu_C(g^{-1} t w \cdot A) > 1 - \kappa.$$
Thus in this case
$$\sH_{\mu_C}(g^{-1} t w \cdot \alpha / g^{-1} t w \cdot \beta) = \mu_C(g^{-1} t w \cdot B) \cdot \sH_{\mu_{C \cap g^{-1} t w \cdot B}}(g^{-1} t w \cdot \alpha_M)$$
$$\leq \sH_{\mu_{C \cap g^{-1} t w \cdot B}}(g^{-1} t w \cdot \alpha_M) < -\kappa \cdot \log(\kappa) - (1 - \kappa) \cdot \log(1 - \kappa) + \kappa \cdot \log(M - 1).$$
So in either case we have
$$\sH_{\mu_C}(g^{-1} t w \cdot \alpha / g^{-1} t w \cdot \beta) < -\kappa \cdot \log(\kappa) - (1 - \kappa) \cdot \log(1 - \kappa) + \kappa \cdot \log(M) < 2 \delta.$$
This holds for every $C \in \C'$. It follows that
$$\sH(g^{-1} t w \cdot \alpha / \Pre(g^{-1} t w) \cdot \alpha)$$
$$= \sH(g^{-1} t w \cdot \beta / \Pre(g^{-1} t w) \cdot \alpha) + \sH(g^{-1} t w \cdot \alpha / \Pre(g^{-1} t w) \cdot \alpha \vee g^{-1} t w \cdot \beta)$$
$$\leq \sH(\beta) + \sH(g^{-1} t w \cdot \alpha / \Pre(g^{-1} t w) \cdot \alpha \vee g^{-1} t w \cdot \beta)$$
$$< \delta + \sum_{C \in \Pre(g^{-1} t w) \cdot \alpha} \mu(C) \cdot \sH_{\mu_C}(g^{-1} t w \cdot \alpha / g^{-1} t w \cdot \beta)$$
$$< \delta + \mu(X \setminus \cup \C') \cdot \log(M) + \sum_{C \in \C'} \mu(C) \cdot \sH_{\mu_C}(g^{-1} t w \cdot \alpha / g^{-1} t w \cdot \beta)$$
$$< \delta + \left( \kappa + \frac{2 \delta}{\log(M)} \right) \log(M) + 2 \delta < 6 \delta = \frac{\epsilon}{|\Sp{n}{S}|}.$$
This justifies our claim.

Now we compute
$$\sum_{1_G \neq h \in G} \delta(h, \alpha) \geq \sum_{w \in \Sp{n}{S}} \sum_{f \in F(w)} \delta(f, \alpha)$$
$$\geq \sum_{w \in \Sp{n}{S}} \left( \sH(w \cdot \alpha / \Pre(w) \cdot \alpha) - \frac{\epsilon}{|\Sp{n}{S}|} \right) = \sH(\B{n}{S} \cdot \alpha / \B{n-1}{S} \cdot \alpha) - \epsilon > \frac{1}{\epsilon} - \epsilon.$$
By letting $\epsilon$ tend to zero we find that the cumulative independence decay is infinite. Therefore $f_G(X, \mu) = -\infty$ by Theorem \ref{THM DISS}.
\end{proof}

\begin{thm} \label{MAIN THEOREM}
Let $G$ have rank $r > 1$ and let $G$ act on a probability space $(X, \mu)$. Assume that $f_G(X, \mu)$ is defined. If $f_G(X, \mu) \neq - \infty$ and $(Y, \nu)$ is any factor of $(X, \mu)$, then for $\nu$-almost every $y \in Y$, the stabilizer of $y$ is either trivial or has finite index in $G$. Furthermore, $\nu$-almost every $y \in Y$ with non-trivial stabilizer is an atom, and thus there are essentially only countably many points with non-trivial stabilizer.
\end{thm}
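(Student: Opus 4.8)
The first move is to reduce to the case where the underlying action is ergodic. Since $f_G(X,\mu)\neq-\infty$ and $r>1$, Corollary \ref{COR ERGDEC} shows the ergodic decomposition $\tau$ of $\mu$ is purely atomic, say with atoms $\nu_1,\nu_2,\dots$ of masses $p_i>0$; by Theorem \ref{THM ERGDEC} together with the bound $f_G(X,\nu_i)\le\sH(\alpha)$ (for a fixed finite Shannon entropy generating partition $\alpha$) each $f_G(X,\nu_i)$ is defined and $\neq-\infty$, and by Lemma \ref{LEM GENERATOR} the partition $\alpha$ remains a finite Shannon entropy generating partition for every $(X,\nu_i)$. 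If $\pi\colon(X,\mu)\to(Y,\nu)$ is the given factor map, then $\nu=\sum_i p_i\,\pi_*\nu_i$ and the distinct $\pi_*\nu_i$ (each ergodic) give the ergodic decomposition $\nu=\sum_j q_j\lambda_j$; choosing for each $j$ an index $i(j)$ with $\pi_*\nu_{i(j)}=\lambda_j$ exhibits $(Y,\lambda_j)$ as a factor of the ergodic action $(X,\nu_{i(j)})$. The conclusion of the theorem is preserved under assembling countably many ergodic components of the target measure, so it suffices to prove it for each $(Y,\lambda_j)$. Hence from now on I assume $(X,\mu)$ is ergodic, $f_G(X,\mu)\neq-\infty$, with finite Shannon entropy generating partition $\alpha$, and $(Y,\nu)$ is a (necessarily ergodic) factor.

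Next I invoke a dichotomy. The set $\{y\in Y:\Stab(y)\neq1_G\}=\bigcup_{g\neq1_G}\{y:g\cdot y=y\}$ is $G$-invariant (conjugation permutes the fixed-point sets), so by ergodicity it is $\nu$-null or $\nu$-co-null. If it is null the action $G\acts(Y,\nu)$ is essentially free and we are done. So assume $\nu$-almost every $y$ has non-trivial stabilizer; the remaining task is to show $\nu$ is purely atomic, for then ergodicity forces $\nu$ to be the uniform measure on a single finite $G$-orbit, which immediately yields that every stabilizer has finite index, every point of the orbit is an atom, and there are only finitely many such points.

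To show $\nu$ is purely atomic I will approximate $Y$ by finite factors. Fix an increasing sequence of finite measurable partitions $\beta_1\subseteq\beta_2\subseteq\cdots$ of $Y$ whose join is the full $\sigma$-algebra, and let $(Y_n,\nu_n)$ be the factor of $(Y,\nu)$ (hence of $(X,\mu)$) determined by the $G$-invariant $\sigma$-algebra $G\cdot\beta_n$; then $\beta_n$ is a finite Shannon entropy generating partition for $G\acts(Y_n,\nu_n)$, the $Y_n$ are increasing with $\bigvee_n(G\cdot\beta_n)$ the full $\sigma$-algebra of $Y$, each $(Y_n,\nu_n)$ is ergodic, and writing $y\mapsto y^{(n)}$ for the factor map we have $\Stab(y)\subseteq\Stab(y^{(n)})$, so $\nu_n$-almost every point of $Y_n$ has non-trivial stabilizer. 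By Theorem \ref{BOWEN FACTOR} applied to the factor $(Y_n,\nu_n)$ of $(X,\mu)$, $f_G(Y_n,\nu_n)$ is defined and
$$f_G(Y_n,\nu_n)\ \geq\ f_G(X,\mu)-\sH(\alpha)\ >\ -\infty.$$
Now Proposition \ref{MAIN PROP} applies to $(Y_n,\nu_n)$: it is either essentially free or $\nu_n$ is purely atomic, and since $\nu_n$-almost every point has non-trivial stabilizer it is not essentially free, so $\nu_n$ is purely atomic; being ergodic and purely atomic it is the uniform measure on a finite $G$-orbit, so $Y_n$ is (up to null sets) a finite set of some cardinality $k_n$ with $\sH(\nu_n)=\log k_n$. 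Here is the decisive point: by Lemma \ref{LEM FINACT}, $f_G(Y_n,\nu_n)=-(r-1)\log k_n$, so combining with the displayed inequality and using $r>1$ gives
$$\log k_n\ \leq\ \frac{\sH(\alpha)-f_G(X,\mu)}{r-1}\ =:\ C\ <\ \infty$$
for every $n$. Thus the finite factors $Y_n$ have \emph{uniformly} bounded cardinality; since they are increasing with join the full $\sigma$-algebra of $Y$, it follows that $Y$ is, up to a null set, a finite set of at most $e^{C}$ points, i.e.\ $\nu$ is purely atomic, as required.

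The main obstacle, and the place where the argument really has to do something, is exactly this last step. Proposition \ref{MAIN PROP} by itself only shows each approximating factor $Y_n$ is finite, which is far too weak — an uncountable $Y$ can perfectly well be an inverse limit of finite factors, and indeed an infinite profinite-type quotient of $G$ with non-trivial infinite-index stabilizers would otherwise be a candidate counterexample. What rules this out is the combination of Bowen's factor inequality (Theorem \ref{BOWEN FACTOR}) with the exact formula $f_G=-(r-1)\sH(\mu)$ for finite actions (Lemma \ref{LEM FINACT}), which pins the cardinalities $k_n$ below a bound depending only on $r$, $\sH(\alpha)$, and $f_G(X,\mu)$; this is where both hypotheses $r>1$ and $f_G(X,\mu)\neq-\infty$ are genuinely needed. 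A secondary technical point is the reduction to the ergodic case, which relies on the countability of the ergodic decomposition from Corollary \ref{COR ERGDEC}.
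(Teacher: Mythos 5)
Your argument is correct and rests on exactly the same ingredients as the paper's proof --- reduction to the ergodic case via Corollary \ref{COR ERGDEC} and Theorem \ref{THM ERGDEC} (with Lemma \ref{LEM GENERATOR}), then Theorem \ref{BOWEN FACTOR}, Proposition \ref{MAIN PROP}, and Lemma \ref{LEM FINACT} applied to finite-entropy factors of $(Y,\nu)$ --- but you run the ergodic case in the contrapositive direction. The paper assumes $\nu$ is not purely atomic (hence atomless by ergodicity), builds a \emph{single} factor generated by a partition with exactly $n$ classes of positive measure, where $n$ is chosen so that $-(r-1)\log n < f_G(X,\mu)-\sH(\alpha)$, and then Proposition \ref{MAIN PROP} plus Lemma \ref{LEM FINACT} force that factor, and hence $(Y,\nu)$ itself, to be essentially free. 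You instead assume the action is not essentially free, exhaust $Y$ by an increasing sequence of finite factors, force each one to be purely atomic with at most $e^{C}$ points where $C=(\sH(\alpha)-f_G(X,\mu))/(r-1)$, and conclude that $\nu$ itself is purely atomic. This buys you the convenience of not having to manufacture a partition with a prescribed number of positive-measure classes (which is what needs atomlessness in the paper), at the cost of the final step, which you compress into ``it follows'': to make it airtight, note that the increasing invariant sub-$\sigma$-algebras $G\cdot\beta_n$ are each generated mod $\nu$-null sets by at most $\lfloor e^{C}\rfloor$ positive-measure classes, so these partitions stabilize mod null, and a positive-measure class admitting no subset of intermediate measure in a standard space carries a point mass. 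Similarly, in the assembling step one should say explicitly that an atom of an ergodic component $\lambda_j$ of $\nu$ is an atom of $\nu$ because $q_j>0$ (this is the paper's bookkeeping with the sets $A$ and $C$). Neither point is a gap, just a line of detail each.
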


\begin{proof}
To be clear, when we say that $\nu$-almost every $y \in Y$ with non-trivial stabilizer is an atom, we mean that there is a Borel set $Y' \subseteq Y$ such that $\nu(Y') = 1$ and every $y \in Y'$ with non-trivial stabilizer is an atom.

First suppose that $G \acts (Y, \nu)$ is ergodic. If $\nu$ is purely atomic then by ergodicity there are only finitely many atoms and they lie in a single orbit. In particular their stabilizers have finite index in $G$. So suppose that $\nu$ is not purely atomic. By ergodicity $\nu$ has no atoms. Let $\alpha$ be a generating partition for $G \acts (X, \mu)$ with $\sH_\mu(\alpha) < \infty$. Let $n \in \N$ be such that
$$-(r - 1) \log(n) < f_G(X, \mu) - \sH_\mu(\alpha).$$
Since $\nu$ has no atoms, we can find a measurable partition $\beta$ of $Y$ which has precisely $n$ classes of positive measure. The sub-$\sigma$-algebra $G \cdot \beta$ gives rise to a factor map $(Y, \nu) \mapsto (Y', \nu')$. Moreover, $\beta$ pushes forward to a partition $\beta'$ of $(Y', \nu')$, and $\beta'$ has $n$ classes of positive measure. Also $\beta'$ is a generating partition for $G \acts (Y', \nu')$ and thus $f_G(Y', \nu')$ is defined since $\beta'$ is finite. By Theorem \ref{BOWEN FACTOR},
$$f_G(Y', \nu') \geq f_G(X, \mu) - \sH_\mu(\alpha) > - (r - 1) \log(n).$$
By Proposition \ref{MAIN PROP}, either $G \acts (Y', \nu')$ is essentially free or $\nu'$ is purely atomic. If $\nu'$ were purely atomic, then it would have to have at least $n$ atoms since each class of $\beta'$ has positive measure. Every atom would have the same measure by ergodicity, so by Lemma \ref{LEM FINACT} we would have $f_G(Y', \nu') \leq -(r - 1) \log(n)$, a contradiction. So $G \acts (Y', \nu')$ is essentially free. It follows that $G \acts (Y, \nu)$ must be essentially free as well.

Now consider the general case in which $G \acts (Y, \nu)$ is not ergodic. Let $\tau$ be the ergodic decomposition of $\mu$. By Corollary \ref{COR ERGDEC}, $\tau$ is purely atomic. Let $E \subseteq \E(X)$ be the set of atoms of $\tau$. By Theorem \ref{THM ERGDEC}, $f_G(X, \lambda)$ is defined for every $\lambda \in E$ and
$$f_G(X, \mu) = \sum_{\lambda \in E} \tau(\{\lambda\}) \cdot f_G(X, \lambda) - (r - 1) \sH(\tau).$$
Since $f_G(X, \mu) \neq -\infty$, we must have $f_G(X, \lambda) \neq -\infty$ for every $\lambda \in E$. Each measure $\lambda \in E$ pushes forward to an ergodic measure $\lambda^*$ on $Y$, and we have
$$\nu = \sum_{\lambda \in E} \tau(\{\lambda\}) \cdot \lambda^*.$$
By the previous paragraph $\lambda^*$ is either purely atomic or else $G \acts (Y, \lambda^*)$ is essentially free. Of course, when $\lambda^*$ is purely atomic the stabilizer of $\lambda^*$-almost every point has finite index in $G$. So if we let $B$ be the set of all $y \in Y$ for which the stabilizer of $y$ is neither trivial nor has finite index in $G$, then $\lambda^*(B) = 0$ for every $\lambda \in E$. By the above decomposition of $\nu$, $\nu(B) = 0$ and thus the stabilizer of $\nu$-almost every $y \in Y$ is either trivial or has finite index in $G$. Now let $A \subseteq Y$ be the set of all atoms of $\nu$. So $y \in A$ if and only if $\nu(\{y\}) > 0$. Also let $C$ be the set of $y \in Y$ for which the stabilizer of $y$ is non-trivial. Then $\lambda^*(C \setminus A) = 0$ for every $\lambda \in E$ and thus $\nu(C \setminus A) = 0$. So setting $Y' = Y \setminus (C \setminus A)$, we have $\nu(Y') = 1$ and every $y \in Y'$ with non-trivial stabilizer lies in $Y' \cap C \subseteq A$ and is thus an atom of $\nu$.
\end{proof}

We now prove Corollary \ref{INTRO SUBERG} from the introduction.

\begin{cor}
Let $G$ have rank $r > 1$ and let $G$ act on a probability space $(X, \mu)$. Assume that $f_G(X, \mu)$ is defined. If $G \acts (X, \mu)$ is ergodic and $f_G(X, \mu) \neq -\infty$, then there is $n \in \N$ such that for every subgroup $\Gamma \leq G$ containing a non-trivial normal subgroup of $G$ the number of ergodic components of $\Gamma \acts (X, \mu)$ is at most $n$.
\end{cor}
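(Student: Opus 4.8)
The plan is to reduce to the case of a non-trivial \emph{normal} subgroup and then extract a uniform bound on its number of ergodic components from the stabilizer theorem (Theorem \ref{MAIN THEOREM}) together with the factor inequality (Theorem \ref{BOWEN FACTOR}). First I would fix a generating partition $\alpha$ with $\sH_\mu(\alpha) < \infty$ and let $n$ be the least positive integer with $\log n \geq \big(\sH_\mu(\alpha) - f_G(X, \mu)\big)/(r - 1)$; since $f_G(X, \mu)$ is finite and $r > 1$, this is a genuine integer depending only on the action. I claim this $n$ works. If $\Gamma \leq G$ contains a non-trivial normal subgroup $K \lhd G$, then every $\Gamma$-invariant Borel set is $K$-invariant, so the $\sigma$-algebra of $\Gamma$-invariant sets is contained in the $\sigma$-algebra of $K$-invariant sets; hence the number of ergodic components of $\Gamma \acts (X, \mu)$ is at most the number of ergodic components of $K \acts (X, \mu)$, and it suffices to bound the latter by $n$.

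To control the $K$-ergodic components, consider the sub-$\sigma$-algebra $\mathcal{I}_K$ of $K$-invariant Borel sets; it is $G$-invariant precisely because $K \lhd G$ (if $B$ is $K$-invariant and $g \in G$ then $gB$ is $gKg^{-1} = K$-invariant). Let $(Y, \nu)$ be the factor of $(X, \mu)$ induced by $\mathcal{I}_K$: concretely $Y$ is the space of $K$-ergodic components of $\mu$, $\nu$ is the corresponding decomposition measure, and $G$ acts on $(Y, \nu)$ ergodically since $G \acts (X, \mu)$ is. A point of $Y$ is a $K$-ergodic, in particular $K$-invariant, measure, so its stabilizer in $G$ contains $K$ and is therefore non-trivial. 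Note that $K$ may be infinitely generated when it has infinite index in $G$, so Corollary \ref{COR ERGDEC} does not apply to it directly; instead one invokes Theorem \ref{MAIN THEOREM} for the factor $(Y, \nu)$. It yields that for $\nu$-almost every $y \in Y$ the stabilizer of $y$ has finite index in $G$ and, the stabilizer being non-trivial, that $y$ is an atom of $\nu$. Thus $\nu$ is purely atomic, and an ergodic action on a purely atomic space has only finitely many atoms, all of the same mass; write $N$ for their number, so $\sH(\nu) = \log N$.

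Then I would apply Lemma \ref{LEM FINACT}, which gives $f_G(Y, \nu) = -(r - 1)\log N$, so in particular $f_G(Y, \nu)$ is defined and Theorem \ref{BOWEN FACTOR} applies, yielding $-(r - 1)\log N = f_G(Y, \nu) \geq f_G(X, \mu) - \sH_\mu(\alpha)$. Rearranging, $\log N \leq \big(\sH_\mu(\alpha) - f_G(X, \mu)\big)/(r - 1) \leq \log n$, so $N \leq n$. Since the $K$-invariant $\sigma$-algebra is then, up to null sets, the finite algebra generated by the $N \leq n$ ergodic components of $K$, every $\Gamma$-invariant set is a union of these components, so $\Gamma$ has at most $N \leq n$ ergodic components. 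As $n$ was chosen independently of $K$ and $\Gamma$, this proves the corollary.

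The routine but essential bookkeeping is the identification of $(Y, \nu)$ with the space of $K$-ergodic components and the verification that it is genuinely a measure-preserving $G$-factor, so that Theorem \ref{MAIN THEOREM} is applicable. The conceptual heart of the argument is the observation that non-triviality of $K$ forces \emph{every} fiber stabilizer over $Y$ to be non-trivial; this is exactly what collapses the trichotomy of Theorem \ref{MAIN THEOREM} to the case ``finite-index stabilizer, hence finite orbit, hence atom,'' turning the otherwise unruly $K$-ergodic decomposition into a bona fide finite quotient of the action on which Lemma \ref{LEM FINACT} and Theorem \ref{BOWEN FACTOR} can be brought to bear.
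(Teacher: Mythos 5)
Your proof is correct and follows essentially the same route as the paper: pass to the factor induced by the $K$-invariant $\sigma$-algebra, note every stabilizer there contains $K$, apply Theorem \ref{MAIN THEOREM} to get pure atomicity, and bound the number of atoms via Lemma \ref{LEM FINACT} and Theorem \ref{BOWEN FACTOR}, then reduce $\Gamma$ to $K$. The only (immaterial) difference is your choice of $n$ as the least integer with $\log n \geq (\sH_\mu(\alpha) - f_G(X,\mu))/(r-1)$ rather than the paper's maximal $n$ with the reverse inequality.
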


\begin{proof}
Let $\alpha$ be a generating partition for $G \acts (X, \mu)$ with $\sH(\alpha) < \infty$. Let $n$ be maximal such that
$$-(r - 1) \log(n) \geq f_G(X, \mu) - \sH(\alpha).$$
Such an $n$ exists since we are assuming $r > 1$. Let $\Gamma \leq G$ be a subgroup containing a non-trivial normal subgroup of $G$. Say $K \neq \{1_G\}$ is normal in $G$ and $K \leq \Gamma$. Let $\tau_K$ be the ergodic decomposition of $K \acts (X, \mu)$. Then $G$ acts on $(\M_K(X), \tau_K)$ and this action is a factor of $G \acts (X, \mu)$ (the factor map is induced by the sub-$\sigma$-algebra of all $K$-invariant Borel sets). For $\tau_K$-almost every $\lambda \in \M_K(X)$ the $G$-stabilizer of $\lambda$ contains $K$ and is thus non-trivial. So Theorem \ref{MAIN THEOREM} implies that $\tau_K$ is purely atomic. By ergodicity there are finitely many atoms, each with the same measure. So if there are $m$ atoms of $\tau_K$ then by Theorem \ref{BOWEN FACTOR} and Lemma \ref{LEM FINACT}
$$- (r - 1) \log(m) = f_G(\M_K(X), \tau_K) \geq f_G(X, \mu) - \sH(\alpha).$$
Therefore $m$, the number of ergodic components of $K \acts (X, \mu)$, is at most $n$. Since $K \leq \Gamma$, every $\Gamma$ ergodic component of $\Gamma \acts (X, \mu)$ contains at least one $K$-ergodic component. Therefore the number of $\Gamma$-ergodic components is at most $n$.
\end{proof}

The above corollary cannot be extended to hold for all non-trivial subgroups of $G$. This is demonstrated by the following example provided by Lewis Bowen (private communication).

\begin{prop}[Lewis Bowen] \label{BOWEN EX}
Let $G$ have rank $r > 1$. Then there exists an ergodic action $G \acts (X, \mu)$ and a non-trivial subgroup $\Gamma \leq G$ such that $f_G(X, \mu)$ is defined and finite (i.e. $f_G(X, \mu) \neq -\infty$), but there are infinitely many ergodic components of $\Gamma \acts (X, \mu)$.
\end{prop}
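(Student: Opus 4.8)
The plan is to build $X$ as a ``decorated Bernoulli shift''. Fix the free generating set $S=\{a_1,\dots,a_r\}$ (so $r>1$), let $(Z,p)$ be a probability space with $Z$ countable, $\sH(p)<\infty$, and $p(\{z\})>0$ for infinitely many $z$ (e.g.\ $Z=\N$, $p(\{n\})=2^{-n-1}$). Set $X=Z\times Z^G$, $\mu=p\times p^{\otimes G}$, and write a point as $(z,\omega)$ with $z\in Z$ and $\omega\colon G\to Z$. Let $G$ act by: for $j\neq 2$, $a_j\cdot(z,\omega)=(z,\sigma_{a_j}\omega)$, where $\sigma_t$ is the Bernoulli shift $(\sigma_t\omega)(g)=\omega(t^{-1}g)$; and $a_2\cdot(z,\omega)=(\omega(1_G),\sigma_{a_2}\omega')$, where $\omega'(1_G)=z$ and $\omega'(g)=\omega(g)$ for $g\neq 1_G$. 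Since transposing two i.i.d.\ coordinates and shifting both preserve $\mu$, this defines a measure-preserving action (there are no relations to verify, $G$ being free). Put $\Gamma=\langle a_1\rangle$, a nontrivial subgroup.

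\textbf{Ergodicity and the ergodic components of $\Gamma$.} First I would note that $a_1$ acts as $\mathrm{id}_Z\times\sigma_{a_1}$, and $\sigma_{a_1}$ is mixing on $(Z^G,p^{\otimes G})$ (it is a countable product of mixing $\Z$-Bernoulli shifts, one for each right $\langle a_1\rangle$-coset); hence every $a_1$-invariant set agrees mod $\mu$ with $Y\times Z^G$ for some $Y\subseteq Z$. Applying $a_2$-invariance to such a set and using that the events $\{z\in Y\}$ and $\{\omega(1_G)\in Y\}$ are $\mu$-independent with common probability $p(Y)$ forces $p(Y)\in\{0,1\}$, so $G\acts(X,\mu)$ is ergodic. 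On the other hand, because $\sigma_{a_1}$ is ergodic, the ergodic decomposition of $\mathrm{id}_Z\times\sigma_{a_1}$ is the product of the (discrete) ergodic decomposition of $\mathrm{id}_Z$ on $(Z,p)$ with $\{p^{\otimes G}\}$; thus $\Gamma\acts(X,\mu)$ has ergodic components exactly $\{z\}\times Z^G$ for $z$ in the (infinite) support of $p$, i.e.\ infinitely many.

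\textbf{Defining and computing $f_G(X,\mu)$.} Let $\alpha$ be the partition of $X$ by the pair $(z,\omega(1_G))$, so $\sH(\alpha)=2\sH(p)<\infty$. The key combinatorial fact, which I would prove by induction on the reduced word length $|g|$, is: for every $g\in G$ and every $y=(z_y,\omega_y)$, the two coordinates of the $\alpha$-label of $g^{-1}\cdot y$ are $\omega_y(g)$ together with one further value lying in $\{z_y\}\cup\{\omega_y(h):|h|<|g|\}$ (for $g=1_G$ this is just $(z_y,\omega_y(1_G))$). The inductive step writes $g=sg'$ with $|g|=|g'|+1$ and uses the explicit formulas for $s^{-1}\cdot y$; the only delicate case is $s=a_2^{\pm1}$, where one must track how the transposition moves the decoration into or out of position $1_G$. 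From this fact, $\bigvee_{g\in G}g\cdot\alpha$ determines $\omega(g)$ for all $g$ and (via $g=1_G$) also $z$, so $\alpha$ is generating and $f_G(X,\mu)$ is defined. Moreover $\Pre(g)\cdot\alpha$ determines $z$ and $\{\omega(h):h\in\Pre(g)\}\cup\{\omega(h):|h|<|g|\}$ but never $\omega(g)$, while $\omega(g)$ is $\mu$-independent of all of this; hence by Lemma \ref{LEM SHAN} one gets $\sH(g\cdot\alpha/\Pre(g)\cdot\alpha)=\sH(p)$ for every $g\neq 1_G$ and $\sH(1_G\cdot\alpha/\Pre(1_G)\cdot\alpha)=\sH(\alpha)=2\sH(p)$. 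Therefore the independence decay satisfies $\delta(g,\alpha)=0$ whenever $|g|\geq 2$ and $\delta(s,\alpha)=\sH(p)$ for each $s\in S\cup S^{-1}$, so by Theorem \ref{THM DISS}
$$f_G(X,\mu)=\sH(\alpha)-\tfrac12\sum_{1_G\neq g\in G}\delta(g,\alpha)=2\sH(p)-\tfrac12\cdot 2r\cdot\sH(p)=(2-r)\,\sH(p),$$
which is finite. Taking the subgroup $\Gamma=\langle a_1\rangle$ completes the proof.

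\textbf{Main obstacle.} The one nontrivial point is the inductive description of $g\cdot\alpha$ in the third paragraph: one must formulate the inductive statement so that it records both that $\omega_y(g)$ occurs among the two coordinates of the $\alpha$-label of $g^{-1}y$ \emph{and} that the other coordinate stays ``already seen'' (an earlier $\omega$-value or the decoration), and one must push this through the generator $a_2$, which is the only place where the decoration interacts with the shift. Once that fact is established, ergodicity, the count of ergodic components of $\Gamma$, and the extraction of $f_G$ from the independence decay are all routine.
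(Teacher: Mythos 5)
Your proposal is correct, including the inductive claim you flag as the main obstacle, and it rests on the same key tool as the paper, namely the independence-decay formula of Theorem \ref{THM DISS}; what differs is the concrete example. The paper (following Bowen) takes the generalized Bernoulli shift over the coset space $G/\Gamma$ with $\Gamma = \langle t \rangle$, $t \in S$: the partition $\alpha$ by the coordinate at the coset $\Gamma$ is generating and literally $\Gamma$-invariant, each $g \cdot \alpha$ reads the single coordinate $g\Gamma$, so independence of the coordinates immediately gives $\delta(g, \alpha) = 0$ unless $g \in \{t, t^{-1}\}$, whence $f_G(X, \mu) = 0$, while $\Gamma$-invariance of $\alpha$ yields infinitely many $\Gamma$-ergodic components. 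Your decorated shift is, after unravelling, also a generalized Bernoulli action: identifying $X = Z^{G \sqcup \{*\}}$, the generators $a_j$ with $j \neq 2$ act on the index set by left multiplication and fix $*$, while $a_2$ acts by left multiplication except that $1_G \mapsto *$ and $* \mapsto a_2$; your $\Gamma = \langle a_1 \rangle$ fixes the index $*$, which is exactly what produces the infinitely many $\Gamma$-components. This reformulation also dissolves your ``main obstacle'': $g \cdot \alpha$ is the partition by the two coordinates $g \cdot *$ and $g \cdot 1_G$, and a short reduced-word check (tracking $1_G$ or $*$ through the letters of $g$ from the right, the index follows plain left multiplication except when it sits at $1_G$ or $a_2$ and the next letter is $a_2$ or $a_2^{-1}$ respectively, and reducedness confines such swaps) shows that one of these two indices is always $g$ itself while the other is $*$ or a proper prefix of $g$. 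From this, independence of the coordinates gives exactly your values $\sH(g \cdot \alpha / \Pre(g) \cdot \alpha) = \sH(p)$ for $g \neq 1_G$, hence $\delta(s, \alpha) = \sH(p)$ for $|s| = 1$, $\delta(g, \alpha) = 0$ for $|g| \geq 2$, and $f_G(X, \mu) = (2 - r)\sH(p)$, which is finite as required (the paper's example gives $0$ instead). In short: the paper's choice of index set $G/\Gamma$ buys completely trivial bookkeeping, whereas your construction costs the induction through the $a_2$-swap, but your argument is sound and the conclusion stands.
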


\begin{proof}
Let $\preceq$ be a well ordering of $G$ as described in the paragraph above Theorem \ref{INTRO DISS}, and let $\Pre(g)$ be the set of group elements strictly preceding $g$. Fix $t \in S$ and set $\Gamma = \langle t \rangle$. Let $G / \Gamma = \{ g \Gamma \: g \in G\}$ be the set of all left cosets of $\Gamma$.

Let $\nu$ be the probability measure on the positive integers, $\N_+$, defined by $\nu(\{n\}) = 2^{-n}$. It is easily computed that $\sH(\nu) = 2 \cdot \log(2) < \infty$. Let $X$ be the set of all functions from $G / \Gamma$ to $\N_+$. Equivalently,
$$X = \N_+^{G / \Gamma} = \prod_{g \Gamma \in G / \Gamma} \N_+.$$
We let $\mu = \nu^{G / \Gamma}$ be the product measure. We let $G$ act on $X$ by permuting coordinates on the left: $(h \cdot x)(g \Gamma) = x(h^{-1} g \Gamma)$. Since $G$ acts by permuting coordinates, it readily follows that $\mu$ is $G$-invariant. The action $G \acts (X, \mu)$ is quite similar to a Bernoulli shift. After a minor and obvious modification, the standard argument that Bernoulli shifts are ergodic shows that $G \acts (X, \mu)$ is ergodic as well.

Let $\alpha = \{A_i \: i \in \N_+\}$ be the partition of $X$ given by $A_i = \{x \in X \: x(\Gamma) = i\}$. Then $\alpha$ is a generating partition for $G \acts (X, \mu)$ and $\sH_\mu(\alpha) = \sH(\nu) < \infty$. So $f_G(X, \mu)$ is defined. Notice that $\gamma \cdot \alpha = \alpha$ for every $\gamma \in \Gamma$. Since values at distinct coordinates are independent, it is not difficult to check that $\sH(g \cdot \alpha / \Pre(g) \cdot \alpha)$ is $0$ if $g \Gamma \subseteq \Pre(g) \cdot \Gamma$ and is otherwise equal to $\sH(\alpha)$. Since $t \in S$ and $\Gamma = \langle t \rangle$, it follows that $\sH(g \cdot \alpha / \Pre(g) \cdot \alpha)$ equals $0$ if the reduced $S$-word representation of $g$ ends with $t$ or $t^{-1}$ and is otherwise equal to $\sH(\alpha)$. So if $1_G \neq g \in G$ and $s \in S$ satisfy $|s^{-1} g| = |g| - 1$, then $\sH(g \cdot \alpha / \Pre(g) \cdot \alpha) = \sH(s^{-1} g \cdot \alpha / \Pre(s^{-1} g) \cdot \alpha)$ unless $g \in \{t, t^{-1}\}$. It follows that $\delta(g, \alpha) = 0$ for $g \not\in \{1_G, t, t^{-1}\}$ and $\delta(t, \alpha) = \delta(t^{-1}, \alpha) = \sH(\alpha)$. So $f_G(X, \mu) = 0$ by Theorem \ref{THM DISS} (alternatively, one could observe that $G \acts (X, \mu)$ is measurably conjugate to a Markov process and apply \cite[Theorem 11.1]{B10d}). So $G \acts (X, \mu)$ is ergodic, $f_G(X, \mu)$ is defined, and $f_G(X, \mu) = 0 \neq -\infty$. Finally, we have $\gamma \cdot A = A$ for every $A \in \alpha$ and $\gamma \in \Gamma$, so there are at least as many ergodic components of $\Gamma \acts (X, \mu)$ as there are members of $\alpha$ with positive measure. We conclude that there are infinitely many ergodic components of $\Gamma \acts (X, \mu)$.
\end{proof}

\thebibliography{9}

\bibitem{AGV12}
M. Ab\'{e}rt, Y. Glasner, and B. Vir\'{a}g,
\textit{Kesten's theorem for invariant random subgroups}, preprint. http://arxiv.org/abs/1201.3399.

\bibitem{AGV11}
M. Ab\'{e}rt, Y. Glasner, and B. Vir\'{a}g,
\textit{The measurable Kesten theorem}, preprint. http://arxiv.org/abs/1111.2080, version 2.

\bibitem{B11}
L. Bowen,
\textit{Weak isomorphisms between Bernoulli shifts}, Israel J. of Math 183 (2011), no. 1, 93--102.

\bibitem{B10a}
L. Bowen,
\textit{A new measure conjugacy invariant for actions of free groups}, Annals of Mathematics 171 (2010), no. 2, 1387--1400.

\bibitem{B10b}
L. Bowen,
\textit{Measure conjugacy invariants for actions of countable sofic groups}, Journal of the American Mathematical Society 23 (2010), 217--245.

\bibitem{B10c}
L. Bowen,
\textit{The ergodic theory of free group actions: entropy and the f-invariant}, Groups, Geometry, and Dynamics 4 (2010), no. 3, 419--432.

\bibitem{B10d}
L. Bowen,
\textit{Nonabelian free group actions: Markov processes, the Abramov--Rohlin formula and Yuzvinskii's formula}, Ergodic Theory and Dynamical Systems 30 (2010), no. 6, 1629--1663.

\bibitem{Ba}
L. Bowen,
\textit{Sofic entropy and amenable groups}, to appear in Ergodic Theory and Dynamical Systems.

\bibitem{Bb}
L. Bowen,
\textit{Invariant random subgroups of the free group}, preprint. http://arxiv.org/abs/1204.5939, version 3.

\bibitem{BG}
L. Bowen and Y. Gutman,
\textit{A Juzvinskii addition theorem for finitely generated free group actions}, preprint. http://arxiv.org/abs/1110.5029, version 2.

\bibitem{C}
N.P. Chung,
\textit{The variational principle of topological pressures for actions of sofic groups}, preprint. http://arxiv.org/abs/1110.0699.

\bibitem{CCLTV}
R. Cluckers, Y. Cornulier, N. Louvet, R. Tessera, and A. Valette,
\textit{The Howe--More property for real and p-adic groups}. Math. Scand. 109 (2011), no. 2, 201--224.

\bibitem{D}
T. Downarowicz,
Entropy in Dynamical Systems. Cambridge University Press, Cambridge, 2011.

\bibitem{GH97}
R. Grigorchuk, P. de la Harpe,
\textit{On problems related to growth, entropy, and spectrum in group theory}. J. Dynam. Control Systems 3 (1997), no. 1, 51--89.

\bibitem{HM}
R.E. Howe and C.C. Moore,
\textit{Asymptotic properties of unitary representations}. J. Funct. Anal. 32 (1979), 72--96.

\bibitem{K95}
A. Kechris,
Classical Descriptive Set Theory. Springer-Verlag, New York, 1995.

\bibitem{Ke}
D. Kerr,
\textit{Sofic measure entropy via finite partitions}, preprint. http://arxiv.org/abs/1111.1345.

\bibitem{KL}
D. Kerr and H. Li,
\textit{Soficity, amenability, and dynamical entropy}, to appear in Amer. J. Math.

\bibitem{KL11a}
D. Kerr and H. Li,
\textit{Entropy and the variational principle for actions of sofic groups}, Invent. Math. 186 (2011), 501--558.

\bibitem{KL11b}
D. Kerr and H. Li,
\textit{Bernoulli actions and infinite entropy}, Groups Geom. Dyn. 5 (2011), 663--672.

\bibitem{Kr70}
W. Krieger,
\textit{On entropy and generators of measure-preserving transformations}, Trans. Amer. Math. Soc. (1970) 149, 453--464.

\bibitem{Or70}
D. Ornstein,
\textit{Factors of Bernoulli shifts are Bernoulli shifts}, Advances in Math. 5 (1970), 349--364.

\bibitem{S12}
B. Seward,
\textit{A subgroup formula for f-invariant entropy}. To appear in Ergodic Theory and Dynamical Systems. http://arxiv.org/abs/1202.5071, version 2.

\bibitem{SZ94}
G. Stuck and R. Zimmer,
\textit{Stabilizers for ergodic actions of higher rank semisimple groups}. The Annals of Mathematics 139 (1994), no. 3, 723--747.

\bibitem{TD12}
R. D. Tucker-Drob,
\textit{All mixing actions are almost free}, in preparation.

\bibitem{W82}
P. Walters,
An Introduction to Ergodic Theory. Springer-Verlag, New York, 1982.

\bibitem{Z}
G. H. Zhang,
\textit{Local variational principle concerning entropy of a sofic group action}, preprint. http://arxiv.org/abs/1109.3244.

\bibitem{ZC}
X. Zhou and E. Chen,
\textit{The variational principle of local pressure for actions of sofic group}, preprint. http://arxiv.org/abs/1112.5260.

\end{document}